\numberwithin{equation}{section}
\author{Quentin Faes}
\title{The handlebody group and the images of the second Johnson homomorphism}
\theoremstyle {definition} \newtheorem {defi} {Définition} [section] }
\newtheorem{definition}[defi]{Definition}
\newtheorem{theorem}[defi]{Theorem}
\newtheorem{question}[defi]{Question}
\newtheorem{coroll}[defi]{Corollary}
\newtheorem{lemma}[defi]{Lemma}
\newtheorem{propo}[defi]{Proposition}
\theoremstyle{definition}
\newtheorem*{ack}{Acknowledgements}
\newtheorem{rem}[defi]{Remark}
\newtheorem{example}[defi]{Example}
\def\blfootnote{\xdef\@thefnmark{}\@footnotetext}
\newcommand{\address}{{
  \bigskip
  \footnotesize

  \textsc{Institut de mathématiques de Bourgogne, UMR 5584, Université Bourgogne Franche-Comté, 21000 Dijon, France}\par\nopagebreak
  \textit{E-mail address}  \texttt{quentin.faes@u-bourgogne.fr}}}
\definecolor{wqwqwq}{rgb}{0,0,0}
\pgfplotsset{compat=1.15}
\newcommand{\AJ}{\mathcal{A} \cap J_2}
\newcommand{\AJk}{\mathcal{A} \cap J_k}
\newcommand{\BJk}{\mathcal{B} \cap J_k}
\newcommand*\circled[1]{\tikz[baseline=(char.base)]{
            \node[shape=circle,draw,inner sep=2pt] (char) {#1};}}
\newcommand{\ltree}[4]{\tikz[baseline=12pt, scale = 0.7]{
\draw [color=wqwqwq] (0,2-1.25)-- (2,2-1.25);
\draw [color=wqwqwq] (0,1.5)-- (0,0);
\draw [color=wqwqwq] (2,1.5)-- (2,0);

\draw[color=wqwqwq] (0,3-1.25) node {#1};
\draw[color=wqwqwq] (0,1.1-1.25) node {#2};
\draw[color=wqwqwq] (2,1.1-1.25) node {#3};
\draw[color=wqwqwq] (2,3-1.25) node {#4};}}
\newcommand{\vltree}[4]{\tikz[baseline=6pt, scale = 0.4]{
\draw [color=wqwqwq] (0,2-1.25)-- (2,2-1.25);
\draw [color=wqwqwq] (0,1.5)-- (0,0);
\draw [color=wqwqwq] (2,1.5)-- (2,0);

\draw[color=wqwqwq] (0,3-1.25) node {#1};
\draw[color=wqwqwq] (0,1.1-1.25) node {#2};
\draw[color=wqwqwq] (2,1.1-1.25) node {#3};
\draw[color=wqwqwq] (2,3-1.25) node {#4};}}
\newcommand{\ltritree}[3]{\tikz[baseline=12pt ,scale = 0.7]{
\draw [color=wqwqwq] (1,1.5)-- (1,0.75);
\draw [color=wqwqwq] (1,0.75)-- (1-0.866*0.75,0.75-.37);
\draw [color=wqwqwq] (1,0.75)-- (1+0.866*0.75,0.75-.37);

\draw[color=wqwqwq] (1,1.5+0.2) node {#1};
\draw[color=wqwqwq] (1-0.866*0.75-0.2,0.75-.37-0.2) node {#2};
\draw[color=wqwqwq] (1+0.866*0.75+0.2,0.75-.37-0.2) node {#3};}}
\newcommand{\ses}[3]{\[ 0 \longrightarrow #1 \longrightarrow #2 \longrightarrow #3 \longrightarrow 0\]}
\newcommand{\K}{\mathcal{K}}
\newcommand{\I}{\mathcal{I}}
\newcommand{\A}{\mathcal{A}}
\newcommand{\G}{\mathcal{G}}
\newcommand{\B}{\mathcal{B}}
\newcommand{\M}{\mathcal{M}}
\newcommand{\Q}{\mathbb{Q}}
\begin{document}
\titlelabel{\thetitle.  }
\maketitle
\begin{adjustwidth}{20 pt}{20pt}
\textsc{Abstract.}  Given an oriented surface bounding a handlebody, we study the subgroup of its mapping class group defined as the intersection of the handlebody group and the second term of the Johnson filtration: $\AJ$. We introduce two trace-like operators, inspired by Morita's trace, and show that their kernels coincide with the images by the second Johnson homomorphism $\tau_2$ of $J_2$ and $\AJ$, respectively. In particular, we answer by the negative to a question asked by Levine about an algebraic description of $\tau_2( \AJ)$. By the same techniques, and for a Heegaard surface in $S^3$, we also compute the image by $\tau_2$ of the intersection of the Goeritz group $\mathcal{G}$ with $J_2$.
\end{adjustwidth}
\setcounter{tocdepth}{1}
\renewcommand{\contentsname}{\hfill Contents\hfill}
\renewcommand{\cfttoctitlefont}{\scshape}
\renewcommand{\cftaftertoctitle}{\hfill}
\renewcommand{\cftsecaftersnum}{.}
\setlength{\cftbeforesecskip}{0.05cm}
\tableofcontents
\section{Introduction and notations}
\label{sec1}
We consider an abstract handlebody $V_{g}$ of genus $g$ whose boundary is a surface $\Sigma_{g}$ of genus $g$. This surface minus a disk will be the surface with non-empty boundary $\Sigma_{g,1}$. We will often forget the indices concerning the genus and the number of boundary components when they are clear from context. \blfootnote{This research has been supported by the project “AlMaRe” (ANR-19-CE40-0001-01) of the ANR and the project ``ITIQ-3D" of the Région Bourgogne Franche-Comté.}

The study of the handlebody group $\mathcal{A}$ is of major importance for the study of the mapping class group of surfaces $\mathcal{M}$, especially in connection with the theory of 3-manifolds and their Heegaard presentations. The reader may find useful information on this topic in the survey by Hensel \cite{hensel}. It is a non-normal subgroup of the mapping class group of infinite index, which makes its study as a subgroup of $\mathcal{M}$ uneasy. Precisely, $\mathcal{M}$ will be our notation for $\mathcal{M}_{g,1}$, the mapping class group of $ \Sigma_{g,1}$, and $\mathcal{A}$ will be our notation for $\mathcal{A}_{g,1}$, the mapping class group of $V_g$ relative to a disk in $\partial V_g$.

We will denote $\pi := \pi_1(\Sigma_{g,1} , x_0)$, where $x_0$ is a point on the boundary of $\Sigma_{g,1}$, and $H :=H_1(\Sigma_{g,1})$ its abelianization. Recall that  $\pi$ is isomorphic to the free group with $2g$ generators $F_{2g}$, and hence $H$ is isomorphic to $\mathbb{Z}^{2g}$. The curves $(\alpha_i)_{1 \leq i \leq g}$ and $(\beta_i)_{1 \leq i \leq g}$ on Figure \ref{surface} are two cutting systems such that each curve in the first one has exactly one intersection point with exactly one curve in the second one, and vice versa. Such a choice is called a system of \emph{meridians} and \emph{parallels}. In particular, it fixes a choice of a basis for $H = \mathbb{Z}\langle a_1,a_2 \dots a_g, b_1, b_2, \dots b_g\rangle$, where $a_i$ (resp. $b_i$) is the homology class of $\alpha_i$ (resp. $\beta_i$). When $\Sigma_{g,1}$ will be regarded as the boundary of $V_g$ (minus a disk), we will suppose that the meridians (i.e. the curves $\alpha_i$) bound pairwise-disjoint disks in the handlebody. If promoted to elements of the fundamental group $\pi$, the curves $\beta_i$ define generators of $\pi' :=  \pi_1(V , x_0)$ and the curves $\alpha_i$ normally generate the kernel of the surjection $\pi \rightarrow \pi'$ induced by the inclusion of $\Sigma_{g,1}$ in $V_g$. We denote $\mathbb{A}$ this kernel, so that $\pi' \simeq \pi / \mathbb{A}$. It is well-known  that the handlebody group $\mathcal{A}$, which can be thought of as consisting of elements of the mapping class group $\mathcal{M}$ extending to the whole handlebody, coincides with the subgroup of $\mathcal{M}$ preserving $\mathbb{A}$ \cite{hensel}. We emphasize that, from the point of view of the surface $\Sigma_{g,1}$, this subgroup $\mathcal{A}$ of $\mathcal{M}$ depends on the choice of handlebody $V_g$.

\begin{figure}[h]
	\centering
	\includegraphics[scale= 0.23]{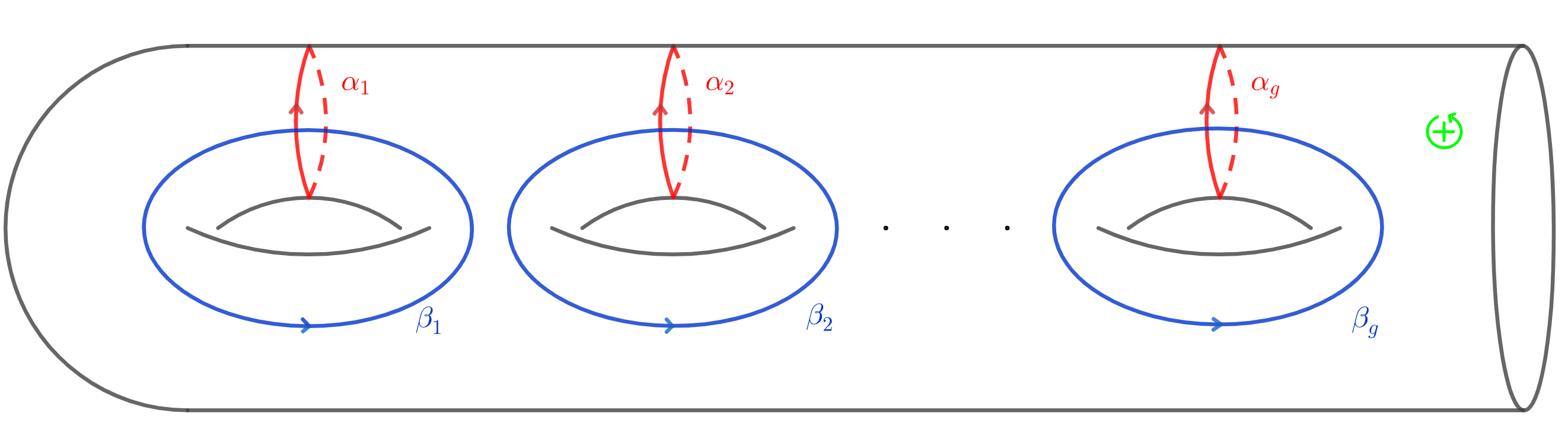}
	\caption{Model for $\Sigma_{g,1}$, and a possible choice of system of meridians and parallels}
	\label{surface}
\end{figure}
We also consider $H':= H_1(V_g)$ the first homology group of the handlebody. The kernel of the homomorphism $H \rightarrow H'$ induced by the inclusion of $\Sigma_{g,1}$ in $V_g$ is denoted $A$. It is generated in $H$ by the elements $a_i$. The group $H' \simeq H/A$ is freely generated by the classes of the elements $b_i$, but should not be thought of as a subgroup of $H$ since there is no canonical way to choose a supplement of $A$ in $H$. We consider the homological intersection form $\omega : H \otimes H \rightarrow \mathbb{Z}$, which induces a non-singular pairing $\omega' : A \otimes H' \rightarrow \mathbb{Z}$. We denote by $\mathcal{L}(H)= \bigoplus_{k \geq 1} \mathcal{L}_k(H)$ the graded Lie ring freely generated by $H$ in degree 1. We denote by $T(H)$ the tensor algebra, in which  $\mathcal{L}(H)$ can be imbedded. The symmetric algebra $S(H)$ is as usual the quotient of $T(H)$ by its antisymmetric tensors.

 In this paper we focus on the study of the group $\AJ$, where $J_2$ is the second term of the Johnson filtration $(J_k)_{k \geq 1}$ \cite{johsurvey}. Examining the group $\AJ$ seems natural when one uses Johnson-type homomorphisms to study finite-type invariants of 3-manifolds from the point of view of Heegaard splittings. Besides, the Johnson filtration of $\mathcal{M}$ is separating, and so is its intersection with $\mathcal{A}$: hence the study of the filtration $(\AJk)_{k\geq 1}$, including the determination of its associated graded ${\bigoplus}_{k\geq 1} \frac{\AJk}{\mathcal{A} \cap J_{k+1}}$, is also relevant for the study of the group $\mathcal{A}$ itself. As the Torelli group $\mathcal{I}$ (the subgroup of $\mathcal{M}$ acting trivially at the homological level) is the first term $J_1$ of the Johnson filtration, the question addressed here is the next natural step after the study of $\mathcal{A} \cap \mathcal{I}$ pursued by Omori in \cite{omo}, and the earlier computation of $\frac{\mathcal{A}\cap J_1}{\AJ}$ given by Morita in \cite{mor}. 
 
The study of the relationship between the Johnson filtration and the handlebody group may cover other aspects. In particular, it was proved independently by Hain \cite{hain} and Jorgensen \cite{jor} that there exist elements of $\M$ arbitrarily deep in the Johnson filtration that are not in the union of the conjugates of $\A$ in $\M$. Besides, Hain also introduced a filtration of a completion of $\M$ (relative to the symplectic representation), called the \emph{weight filtration} and he introduced in \cite{hain} another filtration, the \emph{relative weight filtration} associated to the choice of a handlebody bounded by $\Sigma$. The study of the graded spaces associated to these filtrations should be related to the quotients $\frac{\AJk}{\mathcal{A} \cap J_{k+1}} \otimes \mathbb{Q}$. 
 
In this paper, we work with coefficients in $\mathbb{Z}$ (the only exception will be in Appendix \ref{appA}). To get a more precise grasp of the intersection $\AJ$, we use the Johnson homomorphisms $(\tau_k)_{k \geq 1}$ introduced in \cite{johsurvey}, trace-like operators, and the Casson invariant. 

The first step is to define a trace-like operator $\operatorname{Tr}^{as}$ on the codomain of $\tau_2$ (which is the group of symplectic derivations of degree 2 of $\mathcal{L}(H)$, denoted $D_2(H)$). Using the results of Morita \cite{mor} and Yokomizo \cite{yok}, we prove that the kernel of $\operatorname{Tr}^{as}$ is precisely $\tau_2(J_2)$. We also show that $\tau_2([J_1,J_1]) = \operatorname{Ker}(\operatorname{Tr}^{sym})$, where $\operatorname{Tr}^{sym}$ is another trace-like map (defined on a subgroup of $D_2(H)$). The codomains of $\operatorname{Tr}^{as}$ and  $\operatorname{Tr}^{sym}$ will be respectively $\operatorname{Ker}(\omega : \Lambda^2(H/2H) \rightarrow \mathbb{Z}_2 )$ and $\operatorname{Ker}(\omega : S^2(H/2H) \rightarrow \mathbb{Z}_2 )$. Here, and in the sequel, for any module~$V$, the notation $S^2(V)$ stands for the quotient of $V \otimes V$ by the two-sided ideal generated by the tensors of the form $v \otimes w - w \otimes v$. The module $\Lambda^2(V)$ is the quotient of the same module by the two-sided ideal generated by the tensors of the form $v \otimes w + w \otimes v$ and $v \otimes v$. Notice that this last type of tensor is needed in the definition in the case of $\mathbb{Z}_2$-modules. For example there is a canonical projection from $S^2(H/2H)$ to $\Lambda^2(H/2H)$, given by reducing the classes of elements of the form $v \otimes v$.

The second step is the study of $\tau_2(\AJ)$, which, by definition of $\tau_2$, is isomorphic to $\frac{\AJ}{\mathcal{A} \cap J_3}$. In \cite{lev}, Levine observed that this image is contained  in the kernel of the canonical projection from $D_2(H)$ to $D_2(H')$. He asked whether the intersection of $\operatorname{Ker}(D_2(H) \rightarrow D_2(H'))$ with $\operatorname{Im}(\tau_2)$ was equal to $\tau_2(\AJ)$. We shall define, using the non-singular pairing $\omega'$, another trace-like operator $\operatorname{Tr}^{A}$ vanishing on $\tau_2(\AJ)$, but not on this subgroup proposed by Levine. Therefore, we answer negatively to Levine's question. Furthermore, $\operatorname{Tr}^{as}$ and  $\operatorname{Tr}^{A}$ will allow us to compute precisely $\tau_2(\AJ)$, and thus to identify $\frac{\AJ}{\mathcal{A} \cap J_3}$ with an explicit subgroup of $D_2(H)$.

The paper is organized as follows. In Section \ref{sec2}, we review the definition of the Johnson filtration $(J_k)_{k \geq 1}$ from \cite{johsurvey}, as well as the definition of the Johnson homomorphisms $(\tau_k)_{k \geq 1}$ from \cite{mor93}. Then we define the maps $\operatorname{Tr}^{as}$ and $\operatorname{Tr}^{sym}$ and use them to characterize $\tau_2(J_2)$ and $\tau_2([J_1,J_1])$, respectively. In Section \ref{sec3}, we first review closely related works. Then we recall the definition of the Levine filtration $(L_k)_{k \geq 1}$ from \cite{lev}, so as to state and motivate precisely the question asked by Levine. In Section~\ref{sec4}, we define the map $\operatorname{Tr}^{A}$, and we prove that it gives a new obstruction for an element of $D_2(H)$ to be in $\tau_2(\mathcal{A}\cap J_2)$, by using Morita's decomposition of the Casson invariant \cite{mor}. In Section \ref{sec5}, we compute the image $\tau_2(\mathcal{A} \cap J_2)$ using the algebraic tools introduced in Sections \ref{sec2} and \ref{sec4}. In Section \ref{sec6}, when $\Sigma$ is a Heegaard surface of  $S^3$, we compute $\tau_2(\mathcal{G}\cap J_2)$ where $\mathcal{G} \subset \mathcal{M}$ is the Goeritz group defined by this Heegaard splitting. Finally, in Appendix A, we decompose $\tau_2(\mathcal{G}\cap J_2) \otimes \Q$ into irreducible $\mathrm{GL}(g, \mathbb{Q})$-modules, and we check the computation of Section \ref{sec6} for rational coefficients, without using the main result of Section \ref{sec5}.
\begin{ack}
I would like to thank my advisor, Gwénaël Massuyeau, for his careful rereadings and his encouragements. I am deeply greatful to Anderson Vera, for his helpful comments and for giving the idea of the computation in Section \ref{sec6}. I also thank Richard Hain for giving his comments on the first version of this paper. \end{ack}
\section{Image of the second Johnson homomorphism $\tau_2 $}
\label{sec2}

\subsection{The space of symplectic derivations of degree 2}
Here, we review some facts about Johnson homomorphisms and their diagrammatic description. We are especially interested in describing the image of the second Johnson homomorphism.

\subsubsection{Johnson homomorphisms and tree-like Jacobi diagrams}
The Johnson filtration and the Johnson homomorphisms have been introduced and studied by Johnson and Morita in \cite{johsurvey,mor93}. Recall that $\pi := \pi_1(\Sigma_{g,1})$ is a free group. For $k \geq 1$, we consider its lower central series $(\Gamma_k \pi)_{k \geq 1}$. We call the quotient $N_k := \pi / \Gamma_{k+1} \pi$ the $k$-th nilpotent quotient of $\pi$. The first nilpotent quotient is canonically isomorphic to $H := H_1(\Sigma_{g,1})$. It is clear that $\mathcal{M}$ acts both on $\pi$ and all its nilpotent quotients. There is an exact sequence: \ses{\mathcal{L}_{k+1}(H)}{N_{k+1}}{N_{k}} where the first non-trivial arrow is given by the identification between $\mathcal{L}_{k+1}(H)$ and $\Gamma_{k+1} \pi / \Gamma_{k+2} \pi$. This sequence induces the short exact sequence :
\[ 0 \longrightarrow \operatorname{Hom}(H,\mathcal{L}_{k+1}(H)) \longrightarrow \operatorname{Aut}(N_{k+1}) \longrightarrow \operatorname{Aut}(N_{k}). \]
 \noindent The group $J_k$ is defined as the kernel of the canonical homomorphism $\rho_k : \mathcal{M} \rightarrow \operatorname{Aut}(N_k)$. In particular $J_1$ is called the Torelli group, otherwise denoted $\mathcal{I} = \mathcal{I}_{g,1}$. It consists of elements of the mapping class group acting trivially on the homology of the surface. The alternative notation $\mathcal{K} = \mathcal{K}_{g,1}$ is also sometimes used for $J_2$.

The restriction of $\rho_{k+1}$ to $J_k$ then induces a morphism: \[ \tau_k : J_k \longrightarrow \operatorname{Hom}(H,\mathcal{L}_{k+1}(H)). \]
We call this map the \textit{$k$-th Johnson homomorphism}. Its kernel is $J_{k+1}$. Furthermore, the mapping class group acts on itself by conjugation, inducing an action of the symplectic group $\operatorname{Sp}(H)$ on the quotient $J_k/ J_{k+1}$. This group also naturally acts on $H$. Each $\tau_k$ is then $\operatorname{Sp}(H)$-equivariant. It is also known that the graded space induced by the Johnson filtration has a Lie structure, its bracket being induced by the commutator in $\mathcal{M}$. The target space of $\tau_k$ can be identified with the space of \textit{ derivations of degree $k$}, i.e. derivations of $\mathcal{L}(H)$ mapping $H = \mathcal{L}_1(H)$ to $\mathcal{L}_{k+1}(H)$. We denote by $D_k(H)$ the subspace of \textit{symplectic derivations} of degree $k$. It consists of derivations of degree $k$ sending $\tilde{\omega} \in \Lambda^2 H \simeq  \mathcal{L}_2(H)$, the bivector dual to $\omega$, to 0. The fact that an element of $\mathcal{M}$ fixes the boundary of $\Sigma_{g,1}$ allows to further restrict the image of $\tau_k$ to $D_k(H)$. Also, $D_k(H)$ is determined by the short exact sequence: \ses{D_k(H)}{H \otimes \mathcal{L}_{k+1}(H)}{\mathcal{L}_{k+2}(H)} \noindent where the arrow from ${H \otimes \mathcal{L}_{k+1}(H)}$ to ${\mathcal{L}_{k+2}(H)}$ is the bracket of the free Lie algebra.

With these definitions, the spaces $(D_k(H))_{k \geq 1}$ reassembles in a graded Lie algebra $D(H)$ (the bracket of two derivations $d_1$ and $d_2$ being classically defined as $ d_1 d_2 -d_2 d_1$). The family $(\tau_k)_{k \geq 1}$ induces a map $\tau$ :
\[ \tau : \underset{k\geq 1}{\bigoplus} J_k / J_{k+1} \longrightarrow D(H)\]

\noindent which is an $\operatorname{Sp}(H)$-equivariant graded Lie morphism. The map $\tau_k$ is not onto $D_k(H)$ in general, but it is known to be surjective for $k=1$ \cite{joh80} and rationally surjective for $k=2$~ \cite{mor}. We shall describe in the next subsections the image of $\tau_2$ in a precise way.

We also need to define the spaces of tree-like Jacobi diagrams $\mathcal{A}_k^t(H)$ and rooted tree-like Jacobi diagrams $\mathcal{A}_k^{t,r}(H)$. A tree is a connected graph that is contractible as a topological space. From now on, by ``a tree", we mean a uni-trivalent tree $T$, possibly rooted, whose set of trivalent (or \emph{internal}) vertices is oriented (the orientation being counterclockwise in all the figures), and whose set of univalent (or \emph{external}) vertices, denoted $v_1(T)$, is colored by elements of $H$. We will also refer to external vertices as \emph{leaves} and internal vertices as \emph{nodes}. The cardinal of the set of trivalent vertices $v_3(T)$ is the \textit{degree} of the tree $T$. The spaces $\mathcal{A}_k^t(H)$ and $\mathcal{A}_k^{t,r}(H) $ are the $\mathbb{Z}$-modules generated by trees (respectively rooted trees) of degree $k$ subject to some relations: multilinearity of the labels, the \textit{AS relation}, and the \textit{IHX relation}. We specify these relations for $k=2$ in Figure \ref{figIHX}, and we refer the reader to \cite{levtrees} for further details about what follows. These spaces assemble in two graded algebras $\mathcal{A}^t(H)$ and $\mathcal{A}^{t,r}(H) $ endowed respectively with a Lie bracket and a quasi-Lie bracket. For the bracket of $\mathcal{A}^t(H)$, take two trees, and sum all the ways to contract an external vertex from the first one with an external vertex from the second one using the symplectic form $\omega$. For $\mathcal{A}^{t,r}(H)$, take two trees, and form a tree by gluing their roots  to a rooted binary tree with two leaves. 

\begin{figure}[h]
\centering
$IHX$ : $\ltree{$a$}{$b$}{$c$}{$d$} = \ltree{$a$}{$d$}{$c$}{$b$} + \ltree{$a$}{$c$}{$b$}{$d$}$

$AS$ : $\ltree{$a$}{$b$}{$c$}{$d$} = -\ltree{$b$}{$a$}{$c$}{$d$}$

\caption{Relations in $\mathcal{A}^t_2 (H)$}
\label{figIHX}
\end{figure}

We also define, for any $k$, maps
\begin{align*}
 \eta_k : \mathcal{A}^t_k (H) & \longrightarrow D_k(H) \\  T &  \longmapsto  \sum_{x\in v_1(T)} l_x \otimes T^x
\end{align*}
\noindent where $l_x$ is the element of $H$ coloring the vertex $x$ and $T^x$ is the rooted tree obtained by setting $x$ to be the root in $T$, read as an element of $\mathcal{L}_{k+1}(H)$ (which can be done inductively by considering that $ \ltritree{*}{$a$}{$b$}$ corresponds to $[b,a]$). These maps assemble into a graded Lie algebra morphism which we refer to as ``the expansion map".

\subsubsection{A presentation for $D_2(H)$}

The first Johnson homomorphism takes values in $D_1(H)$ which is known to be isomorphic to $\Lambda^3H$. The map $\tau_1$ is surjective, and $\eta_1$ is an isomorphism, thus identifying the quotient $J_1/J_2$ to $\mathcal{A}^t_1 (H)$.

The second Johnson homomorphism takes values in $D_2(H)$. This space is well understood too. Morita \cite{mor}, using the exact sequence  \[ 0 \longrightarrow \Lambda^3 H \longrightarrow H \otimes \mathcal{L}_{2}(H) \longrightarrow  \mathcal{L}_{3}(H) \longrightarrow   0 ,\]
described it as the image of $(\Lambda^2 (H) \otimes \Lambda^2 H)^{\mathfrak{S}_2}$ in the quotient $(H \otimes H \otimes \Lambda^2 H)/ H \otimes \Lambda^3 H$, where $\mathcal{L}_{2}(H)$ has been identified with $\Lambda^2 H$. 

We will prefer to use the following description given by Levine \cite{levtrees}. Indeed, a simpler way to think about this space is to use the free \emph{quasi-Lie algebra} $\mathcal{L}'(H) = \bigoplus_{k \geq 1}\mathcal{L}_k'(H) $ on $H$, which is defined similarly to the free Lie algebra with the alternativity axiom $[x,x] = 0$ (for any $x \in \mathcal{L}$) replaced by the antisymmetry axiom $[x,y] + [y,x]= 0$ (for any $x,y \in \mathcal{L}$). This change adds 2-torsion to the group. We define $D'_k(H)$, similarly to $D_k(H)$, as the kernel of the bracket from $H \otimes \mathcal{L}'_{k+1}(H)$ to $\mathcal{L}'_{k+2}(H)$. We will only use $k=1$ or $2$ in this paper. We have $D'_1(H) \simeq D_1(H)$ and a commutative diagram with exact rows: 
\[
\begin{tikzcd} 0  \arrow[r] & D'_2(H) \arrow[r]\arrow[d] & H \otimes \mathcal{L}'_{3}(H) \arrow[r]\arrow[d] & \mathcal{L}'_{4}(H) \arrow[r]\arrow[d]  & 0 \\ 0  \arrow[r] & D_2(H) \arrow[r] & H \otimes \mathcal{L}_{3}(H) \arrow[r] & \mathcal{L}_{4}(H)\arrow[r]  & 0. \end{tikzcd}
\]

\noindent Levine also showed that we have the following exact sequence:

\begin{equation}
0\longrightarrow D'_2(H) \longrightarrow D_2(H) \longrightarrow  \Lambda^2(H/2H) \longrightarrow   0.
\label{eqlev}
\end{equation}

This is helpful for the following reason: $D_2(H)$, which is a free abelian group, can be thought of as a lattice in $D_2(H) \otimes \mathbb{Q}$.
By \eqref{eqlev}, to generate $D_2(H)$, one simply needs to add to $D'_2(H)$ expansions of type $\frac{1}{2}\eta(u-u)$ for any rooted tree $u$ with 2 external vertices, that we glue to its copy along their roots. These are indeed elements of $D_2(H)$, i.e. they have integer coefficients. For $x,y \in \Lambda^2 H$ we write $x \leftrightarrow y$ for the element $x \otimes y + y \otimes x$. Also $\Lambda^4 H$ can be embedded in $\left(\Lambda^{2} H \leftrightarrow \Lambda^{2} H\right) \subset\left(\Lambda^{2} H \otimes \Lambda^{2} H\right)^{\mathfrak{S}_{2}}$ by sending $a \wedge b \wedge c \wedge d$ to \[(a \wedge b) \leftrightarrow(c \wedge d)-(a \wedge c) \leftrightarrow(b \wedge d)+(a \wedge d) \leftrightarrow(b \wedge c), \] for $a,b,c,d \in H$.
It has been proven by Levine in \cite{levtrees} using Morita's work in \cite{mor} (see also \cite[Prop. 3.1]{masY3}) that the map
\begin{align*}
\frac{(\Lambda^2 H \otimes \Lambda^2 H)^{\mathfrak{S}_2}}{\Lambda^4 H} &\longrightarrow D_2(H) \\
(a \wedge b) \leftrightarrow (c \wedge d) &\longmapsto a \otimes[b,[c, d]]+b \otimes[[c, d], a]\\ & \quad +c \otimes[d,[a, b]]+d \otimes[[a, b], c] \\ & \quad= \eta_2 \Big (\ltree{$a$}{$b$}{$c$}{$d$}\Big) \\ (a \wedge b) \otimes (a\wedge b) & \longmapsto a \otimes[b,[a, b]]+b \otimes[[a, b], a] \\ &\quad= \frac{1}{2}\eta_2 \Big(\ltree{$a$}{$b$}{$a$}{$b$} \Big)
\end{align*} 
\noindent is a well-defined isomorphism that fits in the commutative diagram with exact rows
\begin{equation}
\begin{tikzcd} 0  \arrow[r] & \frac{S^{2} (\Lambda^{2} H)}{\Lambda^{4} H} \arrow[r, "\leftrightarrow"]\arrow[d , "\eta '"] & \frac{\left(\Lambda^{2} H \otimes \Lambda^{2} H\right)^{\mathfrak{S}_{2}}}{\Lambda^{4} H} \arrow[r]\arrow[d] & \frac{\Lambda^{2} H}{2 \cdot \Lambda^{2} H} \arrow[r]\arrow[d]  & 0 \\ 0  \arrow[r] & D'_2(H) \arrow[r] & D_2(H) \arrow[r] & \Lambda^2(H/2H)\arrow[r]  & 0 \label{diagmas}\end{tikzcd}
\end{equation}
\noindent where $\eta '$  is defined in a way similar to $\eta$ \cite{levtrees}. To be precise the expansion of a tree is actually an element of $D'_2(H)$, and this defines an isomorphism between $ D'_2(H)$ and $\mathcal{A}^t_2(H)$ \cite{levtrees}.

From this we deduce the following presentation of the abelian group $D_2(H)$.

\begin{propo}
$D_2(H)$ is generated by trees $\ltree{a}{b}{c}{d}$ for $a,b,c$ and $d$ in $H$ and elements $a \odot b$ for $a,b \in H$ subject to the following relations:
\newline

- $AS$, $IHX$, and multilinearity with respect to the labels for all trees

- $a \odot a= 0$ and $a \odot b= b \odot a$ for all  $(a,b) \in H \times H$

- $2(a \odot b)= \ltree{$a$}{$b$}{$a$}{$b$}$

- $(a+b) \odot c = a \odot c + b \odot c +  \ltree{a}{c}{b}{c}$

\label{prop2.1}
\end{propo}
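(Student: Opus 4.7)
The plan is to deduce this presentation from the short exact sequence (\ref{eqlev}) together with the isomorphism $\eta': \mathcal{A}_2^t(H) \xrightarrow{\sim} D'_2(H)$ recalled just above. For $a,b\in H$, I would set
\[ a \odot b \;:=\; \tfrac{1}{2}\, \eta_2\!\left(\ltree{$a$}{$b$}{$a$}{$b$}\right) \in D_2(H), \]
which is a genuine integral element by the right-hand column of diagram (\ref{diagmas}) (the class $(a\wedge b)\otimes(a\wedge b)$ being precisely the one whose image receives the division by $2$). Together with the expansions of trees, which span $D'_2(H)$, these elements generate $D_2(H)$ by the exactness of (\ref{eqlev}), since they lift $a \wedge b \in \Lambda^2(H/2H)$.

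I would next verify that the four listed relations hold in $D_2(H)$. The tree relations are inherited from $\mathcal{A}_2^t(H)\simeq D'_2(H)$; the equality $a \odot a = 0$ follows from AS applied to $\ltree{$a$}{$a$}{$a$}{$a$}$, and the symmetry $a\odot b = b\odot a$ from AS applied at both internal vertices of $\ltree{$a$}{$b$}{$a$}{$b$}$. The doubling relation is the definition. For the last relation, I expand $\ltree{$a+b$}{$c$}{$a+b$}{$c$}$ by multilinearity into four trees, identify the two cross terms using AS, apply $\tfrac{1}{2}\eta_2$, and invoke the torsion-freeness of $D_2(H)$ to cancel an overall factor of $2$.

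For the converse, let $G$ be the abelian group presented by the stated generators and relations and let $\varphi: G \twoheadrightarrow D_2(H)$ be the induced surjection. Let $G_0 \subset G$ be the subgroup generated by trees alone. The relations $2(a\odot b)$ and $(a+b)\odot c - a\odot c - b\odot c$ both lie in $G_0$, so the quotient $G/G_0$ admits the standard presentation of $\Lambda^2(H/2H)$, and one obtains a short exact sequence
\[ 0 \longrightarrow G_0 \longrightarrow G \longrightarrow \Lambda^2(H/2H) \longrightarrow 0. \]
The restriction $\varphi|_{G_0}$ identifies, through the common AS/IHX/multilinearity relations, with Levine's isomorphism $\eta':\mathcal{A}_2^t(H) \xrightarrow{\sim} D'_2(H)$. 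Applying the five-lemma to this sequence together with (\ref{eqlev}) then forces $\varphi$ to be an isomorphism.

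The only delicate point I anticipate is verifying that the natural surjection $\mathcal{A}_2^t(H) \twoheadrightarrow G_0$ is actually injective, i.e.\ that the $\odot$-relations do not surreptitiously impose new relations among trees. This is settled by the very commutativity of the diagram above: any hypothetical new tree relation would descend, via $\varphi|_{G_0}$, to a relation in $D'_2(H)$, contradicting the injectivity of $\eta'$.
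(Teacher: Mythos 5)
Your proof is correct, but it takes a genuinely different route from the paper's. The paper establishes the presentation by constructing a pair of mutually inverse homomorphisms between $G$ and the model $\frac{(\Lambda^{2}H\otimes\Lambda^{2}H)^{\mathfrak{S}_{2}}}{\Lambda^{4}H}$ of $D_2(H)$: well-definedness of the forward map is checked on the relations of $G$, and well-definedness of the inverse is checked against an explicit presentation of $(\Lambda^{2}H\otimes\Lambda^{2}H)^{\mathfrak{S}_{2}}$ encoded in the short exact sequence $0\to S^{2}(\Lambda^{2}H)\to(\Lambda^{2}H\otimes\Lambda^{2}H)^{\mathfrak{S}_{2}}\to\Lambda^{2}H/2\Lambda^{2}H\to 0$, with the $\Lambda^{4}H$ relation matched to $IHX$. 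You instead construct only the forward map $\varphi\colon G\to D_2(H)$, check the relations, and deduce bijectivity from the short five lemma applied to $0\to G_0\to G\to G/G_0\to 0$ over \eqref{eqlev}, after identifying $G_0\simeq \mathcal{A}^t_2(H)\simeq D'_2(H)$ and $G/G_0\simeq\Lambda^2(H/2H)$. Your handling of the one delicate point is sound: the composite $\mathcal{A}^t_2(H)\twoheadrightarrow G_0\to D'_2(H)$ is the expansion isomorphism, so the first arrow is forced to be injective, and hence $\varphi|_{G_0}$ is an isomorphism. The two arguments are structurally parallel --- your extension $0\to G_0\to G\to\Lambda^2(H/2H)\to 0$ plays exactly the role that the displayed presentation of $(\Lambda^{2}H\otimes\Lambda^{2}H)^{\mathfrak{S}_{2}}$ plays in the paper --- but yours avoids writing down an inverse homomorphism and stays entirely inside $D_2(H)$, at the price of repeatedly invoking torsion-freeness of $D_2(H)$ (needed already for $a\odot a=0$, where $AS$ only gives $2\,\eta_2\big(\ltree{$a$}{$a$}{$a$}{$a$}\big)=0$, and again to halve the expanded symmetric tree in the last relation); you do use this correctly, since $D_2(H)$ is free abelian. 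Two cosmetic points: the equality of the two cross terms $\ltree{$a$}{$c$}{$b$}{$c$}=\ltree{$b$}{$c$}{$a$}{$c$}$ is the rotational symmetry of the $H$-shaped diagram rather than an application of $AS$, and what you call $\eta'$ is really the expansion isomorphism $\mathcal{A}^t_2(H)\simeq D'_2(H)$ rather than the left vertical arrow of \eqref{diagmas}; neither affects the argument.
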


\begin{proof}
Let us momentarily denote by $G$ the group defined by the presentation. We define a homomorphism from $G$ to $\frac{\left(\Lambda^{2} H \otimes \Lambda^{2} H\right)^{\mathfrak{S}_{2}}}{\Lambda^{4} H}$ by sending $a \odot b$ to the class of $(a \wedge b) \otimes (a \wedge b)$ and any tree $\ltree{$a$}{$b$}{$c$}{$d$}$ to the element corresponding to its expansion through diagram \eqref{diagmas}, i.e. to $(a \wedge b) \leftrightarrow (c \wedge d)$. We define a converse homomorphism by reversing the previous mappings. It  suffices to show that these maps are well-defined to conclude. It is straightforward calculus to check that the relations for $G$ vanish in $\frac{\left(\Lambda^{2} H \otimes \Lambda^{2} H\right)^{\mathfrak{S}_{2}}}{\Lambda^{4} H}$, noting in particular that it is known that the expansion map sends the $IHX$ relation to $0$. Conversely, ${\left(\Lambda^{2} H \otimes \Lambda^{2} H\right)^{\mathfrak{S}_{2}}}$ can be presented in the following way. The group ${\left(\Lambda^{2} H \otimes \Lambda^{2} H\right)^{\mathfrak{S}_{2}}}$ is generated by elements $(a \wedge b) \otimes (a \wedge b)$ and elements $(a \wedge b \leftrightarrow c \wedge d) $ with $a,b,c,d \in H$. The relations are $(a \wedge b) \leftrightarrow (a \wedge b) = 2 (a \wedge b) \otimes (a \wedge b)$  and $((a+b) \wedge c) \otimes ((a+b) \wedge c) - (a \wedge c) \otimes (a \wedge c) -(b \wedge c) \otimes (b \wedge c) = (a \wedge c \leftrightarrow b \wedge c)$. This presentation is summarized in the short exact sequence \ses{S^2(\Lambda^2 H)}{{\left(\Lambda^{2} H \otimes \Lambda^{2} H\right)^{\mathfrak{S}_{2}}}}{\frac{\Lambda^{2} H}{2 \cdot \Lambda^{2} H}} where the last arrow sends $(a \wedge b) \otimes (a \wedge b)$ to $a\wedge b$ and $(a \wedge b) \leftrightarrow (c \wedge d)$ to $0$. We can then read these relations in the presentation of $G$. We finally notice that for any $a,b,c,d \in H$, $(a \wedge b) \leftrightarrow(c \wedge d)-(a \wedge c) \leftrightarrow(b \wedge d)+(a \wedge d) \leftrightarrow(b \wedge c)$ is sent to the $IHX$ relation, up to some antisymmetries.
\end{proof}

\begin{rem} Elements $a \odot b$ for $a,b\in H$ correspond to halfs of symmetric trees (namely $\frac{1}{2}\ltree{$a$}{$b$}{$a$}{$b$}$ for $a,b \in H$) through the inclusion $D_2(H) \subset D_2(H) \otimes \mathbb{Q} \simeq \mathcal{A}^t_2(H) \otimes \mathbb{Q}$. Then, a concise and simple way to summarize the previous discussion, is to say that $D_2(H)$ embeds in the space of trees $\mathcal{A}^t_2(H) \otimes \mathbb{Q}$, and its image is the lattice generated by trees and halfs of symmetric trees. This is what we will do, especially in Sections \ref{sec4} and \ref{sec5}.
\label{remtree}
\end{rem}

\subsection{An explicit description of $\operatorname{Im}(\tau_2)$ in $D_2(H)$}
We aim at a homomorphism that would be explicitly defined on $D_2(H)$, using the presentation in Proposition \ref{prop2.1}, and whose kernel would be $\operatorname{Im}(\tau_2)$. From now on, we will abuse notation and identify $D_2'(H)$ with $\mathcal{A}^t_2(H)$ and think of its elements as trees.

In \cite{joh2}, Johnson showed that $\mathcal{K}$ is generated by Dehn twists along bounding simple closed curves (called \emph{BSCC} maps) of genus 1 and 2. We will denote $T_\gamma$ the Dehn twist along a given simple closed curve $\gamma$. In the sequel, we will need Morita's computations for the image of a BSCC map by the second Johnson homomorphism \cite{mor}:
\begin{lemma}
Let $\gamma$ be a BSCC bounding a subsurface $F$ of genus $h$ in $\Sigma$, and let $(u_i,v_i)_{ 1 \leq i \leq h}$ be any symplectic basis of the first homology group of $F$, then we have: \[ \tau_2(T_\gamma) = \left(\sum\limits_{i = 1}^{h} u_i \wedge v_i\right)^{\otimes 2} = \sum\limits_{i = 1}^{h} u_i \odot v_i + \sum\limits_{\substack{ i,j = 1\\ i \neq j}}^{h} \ltree{$u_i$}{$v_i$}{$u_j$}{$v_j$} \in D_2(H). \]
\label{morformula}
\end{lemma}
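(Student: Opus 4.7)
My plan is to compute $\tau_2(T_\gamma)$ directly from the definition, by tracking the action of $T_\gamma$ on a well-chosen free basis of $\pi$ modulo $\Gamma_4\pi$, and then matching the resulting derivation with the stated formula via the presentation of $D_2(H)$ in Proposition \ref{prop2.1}.

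First I would fix the geometry so that the basepoint $x_0\in\partial\Sigma_{g,1}$ lies in the complement $F':=\Sigma_{g,1}\setminus F$, and choose an arc $\alpha=\alpha_1\alpha_2$ from $x_0$ to a basepoint $y_0$ in the interior of $F$ meeting $\gamma$ transversely at a single point (so $\alpha_1\subset F'$ and $\alpha_2\subset F$). Using $\alpha$, I promote the symplectic basis $(u_i,v_i)_{1\le i\le h}$ of $\pi_1(F,y_0)$ to elements $U_i,V_i\in\pi$, and complete it to a free basis of $\pi$ by adjoining loops in $F'$ disjoint from $\gamma$. The based lift $\widetilde\Gamma:=\alpha_1\gamma\alpha_1^{-1}\in\pi$ then satisfies $\widetilde\Gamma=\prod_{i=1}^h[U_i,V_i]\in\Gamma_2\pi$, with class $\sum_{j=1}^h u_j\wedge v_j$ in $\Gamma_2\pi/\Gamma_3\pi\simeq\Lambda^2 H$.

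The key geometric observation is that $T_\gamma$ is supported in an annular neighborhood of $\gamma$, so it fixes every $F'$-generator verbatim; on an $F$-generator $U_i=\alpha u_i\alpha^{-1}$ the twist inserts a copy of $\gamma^{\epsilon}$ at the unique $\alpha\cap\gamma$ crossing, giving $T_\gamma(U_i)=\widetilde\Gamma^{\epsilon}U_i\widetilde\Gamma^{-\epsilon}$ with $\epsilon=\pm1$ determined by orientation. By the definition of $\tau_2$, and using $\widetilde\Gamma\in\Gamma_2\pi$,
\[ \tau_2(T_\gamma)(u_i)=[\widetilde\Gamma^{\epsilon}, U_i]\equiv\epsilon\,[\widetilde\Gamma,U_i]\equiv\epsilon\sum_{j=1}^h\bigl[[u_j,v_j],u_i\bigr]\pmod{\Gamma_4\pi}, \]
with the analogous formula for $v_i$, and zero on all $F'$-generators.

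Finally, I would identify this derivation with $\omega_F^{\otimes 2}=\bigl(\sum_i u_i\wedge v_i\bigr)^{\otimes 2}\in(\Lambda^2 H\otimes\Lambda^2 H)^{\mathfrak{S}_2}$ through Morita's isomorphism \eqref{diagmas}. The diagonal terms $(u_i\wedge v_i)^{\otimes 2}$ map to the symmetric generators $u_i\odot v_i$, and the off-diagonal terms $(u_i\wedge v_i)\leftrightarrow(u_j\wedge v_j)$ correspond to the trees $\ltree{$u_i$}{$v_i$}{$u_j$}{$v_j$}$; applying the expansion map $\eta_2$ to these recovers exactly the Lie expression computed in the previous step. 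The main obstacle is a careful combinatorial and sign-conventional bookkeeping: fixing orientations to force $\epsilon=+1$, and tracking the factors of $2$ arising when passing between the $\mathfrak{S}_2$-symmetric tensor product and the tree notation, so that the formula emerges in exactly the stated form.
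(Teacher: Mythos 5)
The paper does not actually prove this lemma: it is quoted verbatim from Morita's computation in \cite{mor}, so there is no internal argument to compare against. Your proposal is a correct, self-contained reconstruction of that computation, and it is essentially the standard (indeed Morita's original) argument: choose a free basis of $\pi$ adapted to the decomposition $\Sigma_{g,1}=F\cup F'$, observe that $T_\gamma$ fixes the $F'$-generators and conjugates each $F$-generator by the based lift $\widetilde\Gamma=\prod_i[U_i,V_i]\in\Gamma_2\pi$, and use bilinearity of the commutator pairing $\Gamma_2\pi/\Gamma_3\pi\times\pi/\Gamma_2\pi\to\Gamma_3\pi/\Gamma_4\pi$ to get $\tau_2(T_\gamma)(u_i)=\epsilon\sum_j[[u_j,v_j],u_i]$ (this also re-proves $\tau_1(T_\gamma)=0$, i.e.\ $T_\gamma\in J_2$, which the computation implicitly needs). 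Two bookkeeping points deserve explicit attention in a write-up. First, matching the derivation with an element of $H\otimes\mathcal{L}_3(H)$ requires the $\omega$-duality convention ($x\otimes\ell$ acting as $h\mapsto\omega(x,h)\ell$, or its opposite); this is exactly where the decomposition into $\sum_i u_i\odot v_i$ plus cross-term trees appears, and where your sign $\epsilon$ gets absorbed. Second, since $\ltree{$u_i$}{$v_i$}{$u_j$}{$v_j$}=\ltree{$u_j$}{$v_j$}{$u_i$}{$v_i$}$ and $\big(\sum_i u_i\wedge v_i\big)^{\otimes2}=\sum_i(u_i\wedge v_i)^{\otimes2}+\sum_{i<j}(u_i\wedge v_i)\leftrightarrow(u_j\wedge v_j)$, your computation will naturally produce the cross terms indexed by unordered pairs $i<j$; the sum over $i\neq j$ in the displayed statement should be read accordingly, so insisting on reproducing that expression literally would introduce a spurious factor of $2$. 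Neither point is a gap in the approach, only in the final normalization you already flag as the main obstacle.
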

BSCC maps of genus $1$ and $2$ are all conjugated, by an element of the mapping class group, to one of the Dehn twists $T_{\gamma_1}$ or $T_{\gamma_{1,2}}$ (see Figure \ref{surfcurves} in Section \ref{sec5}). Lemma \ref{morformula} then shows that $\operatorname{Im}(\tau_2)$ is generated by elements of type $u \odot v$ with $\omega(u , v) = 1$ and elements of type $\ltree{$u_1$}{$v_1$}{$u_2$}{$v_2$}$ with $\omega(u_i , v_j) = \delta_{ij}$ and $\omega(u_1 , u_2) = \omega(v_1 , v_2) = 0$. \newline

We also recall that Morita showed in \cite{mor} that the cokernel $D_2(H)/\operatorname{Im}(\tau_2)$ is  a 2-torsion group. Yokomizo showed that whenever $g \geq 2$, its rank over $\mathbb{Z}_2$ is $(g-1)(2g+1)$ \cite{yok}. He gave an explicit basis of the cokernel using the computations of Morita. He also computed that the dimension of $D_2(H)/\tau_2([\mathcal{I},\mathcal{I}])$, which is also a 2-torsion group, is $4g^2 -1$. We shall use the computations of Morita and Yokomizo to prove the second statement in the following theorem. We now suppose that $g \geq 2$.

\begin{theorem}
For any $g \geq 2$, the following homomorphisms

\[
\begin{array}{l}
\left\{ 
\begin{array}{rcl}
D_2'(H) & \overset{\operatorname{Tr}^{sym}}{ \longrightarrow} & \operatorname{Ker}(\omega : S^2(H/2H) \rightarrow \mathbb{Z}_2 ) \\  \ltree{$a$}{$b$}{$c$}{$d$} &  \longmapsto & \omega(a , d) bc + \omega(a , c) bd +\omega(b , d) ac +\omega(b , c) ad \end{array} \right. 
\\
\vspace{0.25pt}
\\
  \left\{ 
\begin{array}{rcl}
  D_2(H) & \overset{\operatorname{Tr}^{as}}{\longrightarrow}  & \operatorname{Ker}(\omega : \Lambda^2(H/2H) \rightarrow \mathbb{Z}_2 ) \\  \ltree{$a$}{$b$}{$c$}{$d$} &  \longmapsto & \omega(a , d) b \wedge c + \omega(a , c) b\wedge d +\omega(b , d) a\wedge c +\omega(b , c) a\wedge d  \\ a \odot b& \longmapsto &(1+\omega(a , b )) a \wedge b
\end{array}
\right.
\end{array}\]
are well-defined, $\operatorname{Sp}(H)$-equivariant, and induce the following commutative diagram with exact rows :

\begin{equation}
\begin{tikzcd} 
0  \arrow[r] & \mathcal{K}/J_3 \arrow[r ,"\tau_2"] & D_2(H) \arrow[r, "\operatorname{Tr}^{as}"] & \operatorname{Ker}(\omega : \Lambda^2(H/2H) \rightarrow \mathbb{Z}_2) \arrow[r] & 0 \\ 0  \arrow[r] & \frac{ [\mathcal{I},\mathcal{I} ]}{J_3 \cap [ \mathcal{I}, \mathcal{I} ] }    \arrow[r, "\tau_2"] \arrow[u, hook] & D'_2(H) \arrow[r , "\operatorname{Tr}^{sym}"] \arrow[u] & \operatorname{Ker}(\omega : S^2(H/2H) \rightarrow \mathbb{Z}_2 ) \arrow[r] \arrow[u]  & 0 
\end{tikzcd}
\label{diag:traces}
\end{equation}

\noindent where the up arrow on the right is induced by the canonical projection $S^2(H/2H)\rightarrow \Lambda^2(H/2H)$.
\label{theoremtraces}
\end{theorem}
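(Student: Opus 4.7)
The plan is to first verify that $\operatorname{Tr}^{as}$ and $\operatorname{Tr}^{sym}$ are well-defined on the presentation of $D_2(H)$ (resp.\ $D'_2(H)$) given in Proposition~\ref{prop2.1}. The AS, IHX and multilinearity relations on trees pass to $\Lambda^2(H/2H)$ and $S^2(H/2H)$ because the symmetries of the formulas force each discrepancy to appear with an even coefficient: AS superposes two copies of the same expression, and the two sides of IHX differ by terms of the form $2\omega(a,b)\,cd$ that vanish modulo $2$. For $\operatorname{Tr}^{as}$, the extra relations involving $\odot$ are direct checks: $a \odot a = 0$ and $a \odot b = b \odot a$ are immediate; $2(a \odot b) = \ltree{$a$}{$b$}{$a$}{$b$}$ becomes $0=0$ in $\Lambda^2(H/2H)$; and $(a+b) \odot c = a \odot c + b \odot c + \ltree{$a$}{$c$}{$b$}{$c$}$ reduces to an identity modulo~$2$ after expansion. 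That the images land in $\operatorname{Ker}(\omega)$ is a direct computation: the four terms coming from a tree pair up to an even multiple, and $(1+\omega(a,b))\omega(a,b) \equiv 0 \pmod 2$. The $\operatorname{Sp}(H)$-equivariance is automatic from the $\operatorname{Sp}(H)$-invariance of $\omega$. The right square of \eqref{diag:traces} commutes because the projection $S^2(H/2H) \to \Lambda^2(H/2H)$ sends $bc$ to $b \wedge c$, making the two trace formulas agree term by term on trees; the middle square commutes because $\tau$ is a Lie morphism, hence $\tau_2([\mathcal{I},\mathcal{I}]) \subset D_2'(H)$.

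For exactness, injectivity of the two $\tau_2$ arrows is built into the definition of the Johnson filtration, since $\operatorname{Ker}(\tau_2|_{J_2}) = J_3$. To show $\operatorname{Tr}^{as} \circ \tau_2 = 0$ on $\mathcal{K}$, I would use Johnson's generating set consisting of BSCC twists of genus $1$ and $2$ together with Lemma~\ref{morformula}: a genus $1$ BSCC gives $u \odot v$ with $\omega(u,v) = 1$, whose image is $2u \wedge v = 0$; a genus $2$ BSCC gives $u_1 \odot v_1 + u_2 \odot v_2 + \ltree{$u_1$}{$v_1$}{$u_2$}{$v_2$}$ with $\omega(u_i,v_j) = \delta_{ij}$ and $\omega(u_i,u_j) = \omega(v_i,v_j) = 0$, and $\operatorname{Tr}^{as}$ kills each summand. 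For the bottom row, the same evaluation yields $\operatorname{Tr}^{sym}(\ltree{$u_1$}{$v_1$}{$u_2$}{$v_2$}) = 0$, and since $\tau_2([\mathcal{I},\mathcal{I}]) \subset D_2'(H)$ is generated under the $\operatorname{Sp}(H)$-action by such trees (coming from the relation, modulo $[\mathcal{I},\mathcal{I}]$, expressing a genus $2$ BSCC as the product of two disjoint genus $1$ BSCCs), this gives $\operatorname{Tr}^{sym} \circ \tau_2 = 0$ on $[\mathcal{I},\mathcal{I}]$.

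The reverse inclusions come from a rank comparison. The induced map $D_2(H)/\operatorname{Im}(\tau_2) \to \operatorname{Ker}(\omega\colon \Lambda^2(H/2H) \to \mathbb{Z}_2)$ is a homomorphism between two $\mathbb{Z}_2$-vector spaces of the same dimension: the source is $2$-torsion by Morita~\cite{mor} and has dimension $(g-1)(2g+1)$ over $\mathbb{Z}_2$ by Yokomizo~\cite{yok}, while the target has dimension $\binom{2g}{2} - 1 = (g-1)(2g+1)$. It therefore suffices to show surjectivity, which I would obtain by exhibiting explicit preimages: $a_i \odot a_j$, $b_i \odot b_j$, $a_i \odot b_j$ for $i \neq j$ hit $a_i \wedge a_j$, $b_i \wedge b_j$, $a_i \wedge b_j$, and $\ltree{$a_i$}{$a_j$}{$b_i$}{$b_j$}$ hits $a_i \wedge b_i + a_j \wedge b_j$, which together generate $\operatorname{Ker}(\omega)$. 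The bottom row is treated similarly: Yokomizo's rank $4g^2 - 1$ for $D_2(H)/\tau_2([\mathcal{I},\mathcal{I}])$, combined with the rank $g(2g-1)$ of $D_2(H)/D_2'(H) \cong \Lambda^2(H/2H)$, yields rank $2g^2 + g - 1 = \binom{2g+1}{2} - 1$ for $D_2'(H)/\tau_2([\mathcal{I},\mathcal{I}])$, matching the target $\operatorname{Ker}(\omega\colon S^2(H/2H) \to \mathbb{Z}_2)$.

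The most delicate step will be the surjectivity verification for $\operatorname{Tr}^{sym}$: classes such as $a_i a_j$, $b_i b_j$, $a_i^2$, $b_i^2$ in $\operatorname{Ker}(\omega\colon S^2(H/2H) \to \mathbb{Z}_2)$ are not reached by trees labeled in the standard symplectic basis, and one is forced to use trees with mixed labels like $a_i + b_k$. The Morita--Yokomizo rank count is precisely what lets one conclude without constructing a complete basis of preimages, reducing the problem to producing enough independent classes in the image.
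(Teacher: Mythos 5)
Most of your outline coincides with the paper's proof: well-definedness is checked on the presentation of Proposition~\ref{prop2.1}, $\operatorname{Tr}^{as}\circ\tau_2=0$ is verified on Johnson's genus-$1$ and genus-$2$ BSCC generators via Lemma~\ref{morformula}, surjectivity of the traces is obtained from explicit preimages, and the Morita--Yokomizo dimension counts ($(g-1)(2g+1)$ and $(g+1)(2g-1)$) upgrade the inclusions to equalities. (One small correction: your worry about $\operatorname{Tr}^{sym}$-preimages of $a_i^2$, $a_ia_j$, $b_ib_j$ is unfounded and no mixed labels are needed; for any generator $cd$ with $\omega(c,d)=0$ and $g\geq 2$ one can pick $a,b$ in the standard basis with $\omega(a,b)=1$, $\omega(a,c)=\omega(b,d)=0$, and then $\operatorname{Tr}^{sym}\Big(\ltree{$a$}{$d$}{$c$}{$b$}\Big)=cd$, while $\operatorname{Tr}^{sym}\Big(\ltree{$a_i$}{$a_g$}{$b_i$}{$b_g$}\Big)=a_ib_i+a_gb_g$.)

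The genuine gap is in the bottom row, at the step $\operatorname{Tr}^{sym}\circ\tau_2=0$ on $[\mathcal{I},\mathcal{I}]$. You assert that $\tau_2([\mathcal{I},\mathcal{I}])$ is generated by trees $\ltree{$u_1$}{$v_1$}{$u_2$}{$v_2$}$ satisfying $\omega(u_i,v_j)=\delta_{ij}$ and $\omega(u_1,u_2)=\omega(v_1,v_2)=0$, justified by ``the relation, modulo $[\mathcal{I},\mathcal{I}]$, expressing a genus $2$ BSCC as the product of two disjoint genus $1$ BSCCs.'' That relation is itself unproven (it is a nontrivial statement about $\mathcal{K}/[\mathcal{I},\mathcal{I}]\simeq B_{\leq 2}$ and must be checked against the Birman--Craggs homomorphism), and even granting it, it would only show that such trees \emph{lie in} $\tau_2([\mathcal{I},\mathcal{I}])$; knowing generators of $\mathcal{K}$ tells you nothing about generators of the subgroup $[\mathcal{I},\mathcal{I}]$, so the generation claim does not follow. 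Since the inclusion $\tau_2([\mathcal{I},\mathcal{I}])\subset\operatorname{Ker}(\operatorname{Tr}^{sym})$ is exactly what the bottom-row rank comparison rests on, this step cannot be skipped. The paper's route is different and complete here: because $\tau_1$ is surjective and $\tau$ is a graded Lie morphism, $\tau_2([\mathcal{I},\mathcal{I}])=[\Lambda^3H,\Lambda^3H]=[D_1(H),D_1(H)]$ is generated by brackets $\Bigg[\ltritree{$a$}{$b$}{$c$},\ltritree{$d$}{$e$}{$f$}\Bigg]$ of \emph{arbitrary} degree-$1$ trees, and a direct computation on the resulting sum of nine trees shows that the coefficient of each monomial of $S^2(H/2H)$ (e.g.\ $\omega(b,e)\omega(c,f)+\omega(b,f)\omega(c,e)+\omega(c,e)\omega(b,f)+\omega(c,f)\omega(b,e)$ for $ad$) is a sum of pairwise equal products of $\omega$'s, hence vanishes mod $2$. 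You should replace your generation claim by this computation.
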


\begin{proof}
Let us first show that the maps are well-defined. For $D_2(H)$ we use the presentation from Proposition \ref{prop2.1}, and for $D'_2(H)$ the presentation given by the definition of $\mathcal{A}_2(H)$. It is clear that the antisymmetry relation is sent to 0 since we are working modulo $\mathbb{Z}_2$. Multilinearity is also clear by multilinearity of the symplectic form. Hence, for the tree part, the only relation to check is the $IHX$ relation:

\begin{align*}
 IHX  \longmapsto & ~  \omega(a , d) bc + \omega(a , c) bd +\omega(b , d) ac +\omega(b , c) ad \\ & ~ \omega(d , c) ab + \omega(d , b) ac +\omega(a , c) db +\omega(a , b) dc \\ & ~ \omega(a , d) cb + \omega(a , b) cd +\omega(c , d) ab +\omega(c , b) ad
\end{align*}
which vanishes in $S^2(H/2H)$ and $\Lambda^2(H/2H)$. We have more relations to check for $\operatorname{Tr}^{as}$. The only non-trivial ones are \[  (2(a \odot b)- \ltree{$a$}{$b$}{$a$}{$b$}) \longmapsto 0-2(\omega(a, b) ab) = 0 \] and the one relating halfs of symmetric trees with regular trees (Remark \ref{remtree})
\begin{align*}
(a+b) \odot c - a \odot c - b \odot c \longmapsto &  \quad (1+\omega((a+b) , c))(a \wedge c +b \wedge c) \\ &+ (1+\omega(a, c))a \wedge c \\ &+  (1+\omega(b , c)) b \wedge c
\\  & = \omega(a,c) b \wedge c + \omega(b,c) a \wedge c
\end{align*}
which is also exactly the image of $\ltree{$a$}{$c$}{$b$}{$c$}$.
\newline

It is immediate that $\operatorname{Tr}^{sym}$ and $\operatorname{Tr}^{as}$ are $\operatorname{Sp}(H)$-equivariant, because $\omega$ is, by definition. It is also straightforward to check that they are onto $\operatorname{Ker}(\omega : S^2(H/2H) \rightarrow \mathbb{Z}_2 )$ and $\operatorname{Ker}(\omega : \Lambda^2(H/2H) \rightarrow \mathbb{Z}_2 )$, respectively. Indeed, over $\mathbb{Z}_2$ these kernels respectively have dimensions $ \binom{2g}{2}+2g-1 = (g+1)(2g-1)$ and $\binom{2g}{2}-1 = (g-1)(2g+1)$. We can easily give explicit generators for these spaces and show the desired surjectivity. The elements $a_i b_j$, $a_i a_j$, $b_i b_j$, $a_i a_i$, and $b_i b_i$ (for $1 \leq i,j \leq g$), together with the elements $a_ib_i + a_gb_g$ (for $1 \leq i <g$) are generators for $\operatorname{Ker}(\omega : S^2(H/2H) \rightarrow \mathbb{Z}_2 )$. The projection of these elements in $\Lambda^2(H/2H)$ gives generators for $\operatorname{Ker}(\omega : \Lambda^2(H/2H) \rightarrow \mathbb{Z}_2 )$. To produce elements mapping to one of these generators $cd$ with $\omega(c,d)=0$, we do the following. The genus being greater than or equal to two we can always suppose that there exists $a,b \in H$ with $\omega(a,b)=1$, $\omega(a,c) = \omega(b,d) =0$ and then $\operatorname{Tr}^{sym}\Big(\ltree{$a$}{$d$}{$c$}{$b$}\Big) = cd$. Also $\operatorname{Tr}^{sym}\Big(\ltree{$a_i$}{$a_g$}{$b_i$}{$b_g$}\Big)= a_ib_i + a_gb_g$. The same computations show that $\operatorname{Tr}^{as}$ is onto. 
\newline
Also, we have from \cite{joh2} a set of generators of $\operatorname{Im}(\tau_2)$ which is  sent to $0$ by the map $\operatorname{Tr}^{as}$: for all $(u,v)$ with $\omega(u , v) = 1$ and all $(u_1,v_1,u_2,v_2)$ with $\omega(u_i , v_j) = \delta_{ij}$ and $\omega(u_1 , u_2) = \omega(v_1 , v_2) = 0$ we have \[ \operatorname{Tr}^{as} (u \odot v) = \operatorname{Tr}^{as}\Big( \ltree{$u_1$}{$v_1$}{$u_2$}{$v_2$} \Big) = 0. \]Hence, $\operatorname{Im}(\tau_2)$ is contained in the kernel of $\operatorname{Tr}^{as}$. For the image of $[\mathcal{I},\mathcal{I}]$ by $\tau_2$, it is known that the image is $[\Lambda^3 H,\Lambda^3 H]$ by the surjectivity of $\tau_1$ and the fact that $\tau$ is a Lie algebra homomorphism. Recall that the bracket in $\mathcal{A}^t(H)$ is given by all the ways to contract external vertices using the symplectic form. Taking the bracket of two elements of form $\ltritree{$a$}{$b$}{$c$}$ and $\ltritree{$d$}{$e$}{$f$}$, we get 9 trees, which will be sent by $\operatorname{Tr}^{sym}$ to 36 terms in $S^2(H/2H)$. For example, the coefficient of the symmetric term $ad$ is \[ \omega(b , e ) \omega ( c, f) + \omega(b , f ) \omega ( c, e) + \omega(c , e ) \omega ( b, f) + \omega(c , f ) \omega ( b, e) \]
coming from the trees \[  \ltree{$c$}{$a$}{$f$}{$d$} , \ltree{$c$}{$a$}{$d$}{$e$} , \ltree{$a$}{$b$}{$f$}{$d$} , \ltree{$a$}{$b$}{$d$}{$e$} . \]The above term vanishes, and we thus see that $\tau_2([\mathcal{I}, \mathcal{I}]) \subset \operatorname{Ker}(\operatorname{Tr}^{sym})$.

Finally, the dimensions of the targets of $\operatorname{Tr}^{as}$ and $\operatorname{Tr}^{sym}$ are equal to the ones given by Yokomizo in \cite[Cor.2.2, Cor.3.2]{yok} for the dimensions of the cokernels of $\tau_2$; i.e. $(g-1)(2g+1)$ for $D_2(H)/\operatorname{Im}(\tau_2)$  and $(g+1)(2g-1)$ for $D'_2(H)/(\tau_2([\mathcal{I},\mathcal{I}]))$. This last dimension is not directly given by Yokomizo: it is obtained from the dimension of $D_2(H)/\tau_2([\mathcal{I},\mathcal{I}])$ by removing $\binom{2g}{2}$, because of the exact sequence \eqref{eqlev}.
\end{proof}
Notice that the kernel of the canonical projection $\operatorname{Ker}(\omega : S^2(H/2H) \rightarrow \mathbb{Z}_2) \rightarrow \operatorname{Ker}(\omega : \Lambda^2(H/2H) \rightarrow \mathbb{Z}_2)$ is isomorphic to $H/2H$, which can be mapped into $S^2(H/2H)$ in the obvious way. Hence, applying the snake lemma to the diagram \eqref{diag:traces} and using \eqref{eqlev}, we get the following description of the image of $\mathcal{K}/ [\mathcal{I},\mathcal{I}]$ under $\tau_2$, i.e. the quotient $\mathcal{K}/([\mathcal{I},\mathcal{I}]\cdot J_3$).

\begin{coroll}
There is a short exact sequence: 
\[ 0 \longrightarrow H/2H \longrightarrow \tau_2(\mathcal{K})/ \tau_2([\mathcal{I},\mathcal{I}]) \simeq \mathcal{K}/([\mathcal{I},\mathcal{I}]\cdot J_3) \longrightarrow  \Lambda^2(H/2H) \longrightarrow   0. \]
\label{corinutile}
\end{coroll}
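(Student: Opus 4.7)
The plan is to apply the snake lemma to the commutative diagram \eqref{diag:traces}, with the three vertical maps viewed as going from the bottom row to the top row.

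First I would identify the cokernels of the three vertical maps. The leftmost map is injective and, by the second isomorphism theorem applied to the inclusion $[\mathcal{I},\mathcal{I}]/(J_3 \cap [\mathcal{I},\mathcal{I}]) \hookrightarrow \mathcal{K}/J_3$, its cokernel is $\mathcal{K}/([\mathcal{I},\mathcal{I}]\cdot J_3)$. The middle map has cokernel $\Lambda^2(H/2H)$ by \eqref{eqlev}. The rightmost map is surjective: given any $x \in \operatorname{Ker}(\omega) \subset \Lambda^2(H/2H)$, writing $x = \sum_i v_i \wedge w_i$ with $\sum_i \omega(v_i, w_i) = 0$, the lift $\tilde{x} = \sum_i v_i \cdot w_i \in S^2(H/2H)$ automatically satisfies $\omega(\tilde{x}) = 0$. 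Hence the rightmost cokernel is trivial.

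Next I would identify the kernels. The two left vertical maps being injective, only the right one contributes. Its kernel is the intersection of $\operatorname{Ker}(\omega : S^2(H/2H) \to \mathbb{Z}_2)$ with the kernel of the projection $S^2(H/2H) \to \Lambda^2(H/2H)$. The kernel of the projection is generated by the diagonal elements $v \cdot v$, and over $\mathbb{Z}_2$ the squaring map $v \mapsto v \cdot v$ is $\mathbb{Z}_2$-linear (because $(v+w)\cdot(v+w) = v\cdot v + w \cdot w + 2 v \cdot w \equiv v\cdot v + w \cdot w \pmod{2}$), yielding an isomorphism $H/2H \simeq \operatorname{Ker}(S^2(H/2H) \to \Lambda^2(H/2H))$. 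Since $\omega(v, v) = 0$ by antisymmetry, this image lies entirely in $\operatorname{Ker}(\omega)$, so the kernel of the right vertical map is canonically $H/2H$.

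Feeding these kernels and cokernels into the snake lemma produces exactly the desired four-term exact sequence. The identification $\tau_2(\mathcal{K})/\tau_2([\mathcal{I},\mathcal{I}]) \simeq \mathcal{K}/([\mathcal{I},\mathcal{I}]\cdot J_3)$ is then immediate from the fact that $\operatorname{Ker}(\tau_2|_{\mathcal{K}}) = J_3$. The argument is essentially routine diagram chasing; the only step requiring a moment of care is the kernel computation of the right vertical map, specifically recognizing that the squaring map is $\mathbb{Z}_2$-linear over $\mathbb{Z}_2$, a phenomenon absent in characteristic zero.
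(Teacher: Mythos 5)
Your proof is correct and follows essentially the same route as the paper: apply the snake lemma to diagram \eqref{diag:traces}, using \eqref{eqlev} for the middle cokernel and identifying the kernel of $\operatorname{Ker}(\omega : S^2(H/2H)\to\mathbb{Z}_2)\to\operatorname{Ker}(\omega : \Lambda^2(H/2H)\to\mathbb{Z}_2)$ with $H/2H$ via the (mod-2 linear) squaring map. The paper states this very tersely; your write-up just supplies the routine verifications.
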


We can relate this short exact sequence to what we know about the abelianization of the Torelli group. For $g \geq 3$, the abelianization of $\mathcal{I}$ is well understood, thanks to the work of Johnson \cite{joh3}. In \cite{johquad}, he built a homomorphism $\beta$ (the so-called Birman-Craggs homomorphism) from the Torelli group to a 2-torsion abelian group $B_{\leq 3}$ (where $B_{\leq k}$ is the filtered space of Boolean polynomial functions of degree at most $k$ on a certain $\mathbb{Z}_2$-affine space), such that the abelianization of $\mathcal{I}$ is isomorphic by $(\tau_1, \beta)$ to a fibered product: \[ \Lambda^3 H \times_{\Lambda^3(H/2H)} B_{\leq 3}.\]
This description implies that $\mathcal{K}/[\mathcal{I},\mathcal{I}]$ is isomorphic to $B_{\leq 2}$ via $\beta$. Johnson also claimed that $\beta(J_3) = B_0$ (see \cite[p.178]{johsurvey}, \cite[Rem. 3.21]{masY3}, and Remark \ref{rembeta} below for a proof). Hence, we have that $J_3 /( [\mathcal{I},\mathcal{I}] \cap J_3)$ is identified to $B_0 \simeq \mathbb{Z}_2$ by the map $\beta$. Therefore, we have $\frac{\mathcal{K}}{[\mathcal{I},\mathcal{I}]\cdot J_3} \stackrel{\beta}{\simeq} B_{\leq 2} / B_0$.  
\noindent Then, the short exact sequence of Corollary \ref{corinutile} fits into the following commutative diagram: 
\begin{equation}
\begin{tikzcd} 
0  \arrow[r]  & H/2H \arrow[r] \arrow[d] &\frac{\mathcal{K}}{[\mathcal{I},\mathcal{I}]\cdot J_3}  \arrow[r] \arrow[d, "\beta"] & \Lambda^2(H/2H) \arrow[r] \arrow[d] & 0 \\ 0  \arrow[r] & B_{\leq 1}/B_{0} \arrow[r] & B_{\leq 2}/B_{0} \arrow[r] & B_{\leq 2}/B_{\leq 1}\arrow[r] & 0.\label{diagyokcor}\end{tikzcd}
\end{equation}

\noindent All vertical arrows are isomorphisms, the left one (respectively the right one) being the inverse of the formal first (respectively second) differential on $B_{\leq 1}$  (respectively $B_{\leq 2}$). We can recover a precise description for the horizontal map $H/2H\rightarrow \frac{\mathcal{K}}{[\mathcal{I},\mathcal{I}]\cdot J_3} $ by investigating in detail the connecting homomorphism arising from the snake lemma applied to diagram \eqref{diag:traces}. The commutativity of the diagram is not trivial and can be deduced from \cite[Prop. 3.3]{yok} or \cite[Lemma 3.18]{masY3}.

\section{Motivations for the study of $\AJ$}
\label{sec3}

We are particularly interested in the relation of the handlebody group with the Johnson filtration. We explain our interest in this filtration and briefly review previous works on this subject.

Below, $\mathcal{V}(3)$ and $\mathcal{S}(3)$ denote respectively the set of all oriented 3-manifolds and all closed oriented homology 3-spheres up to orientation-preserving homeomorphisms. We firstly remind some facts about Heegaard splittings. Any 3-manifold can be divided (not in a unique way) in two handlebodies of same genus. Equivalently, any 3-manifold can be obtained by gluing two handlebodies together by a homeomorphism between their boundaries. Essentially, this homeomorphism specify where a set of meridians of the second handlebody should be sent on the boundary of the first one, yielding the notion of Heegaard diagrams. 

The standard exemple is of course the sphere $S^3$, where one considers the standard handlebody $V_g$ and glues a copy $-V_g$ with opposite orientation by a map sending its meridians to the curves $\beta_i$ in Figure \ref{surface}. Then we get for all $g$ a splitting $S^3 := {V_g} \underset{\iota_g}{\cup} (-V_g)$, where $\iota_g$ is a certain orientation-preserving homeomorphism of $\Sigma_g$ which can be defined by giving its action on $\pi$ (see Section \ref{sec6}). Note that there is, up to isotopy, a unique Heegaard splitting of $S^3$ of genus $g$. We define $\mathcal{B}_{g,1} :=  \iota_g\mathcal{A}_{g,1}\iota^{-1}_g$. We denote by $S^3_\varphi$ the 3-manifold $V_g \underset{\iota \circ \varphi}{\cup} (-V_g)$ for any element $\varphi \in \mathcal{M}_{g,1}$(we extend $\varphi$ to $\Sigma_g$ by the identity on the remaining disk). The map $\varphi$ is called the \emph{gluing map}. We also have stabilization maps $\mathcal{M}_{g,1} \rightarrow \mathcal{M}_{g+1,1} $, compatible with the other maps. When one composes the gluing map on the right, by an element of $\mathcal{B} = \mathcal{B}_{g,1}$ or to the left by an element of $\mathcal{A} $, the resulting manifold does not change up to homeomorphism. The following result is a refinement of the Reidemeister-Singer theorem \cite{rei, sin}.

\begin{theorem}[Reidemeister-Singer]
There is a bijection  \begin{align*}
\underset{g \rightarrow + \infty}{\operatorname{lim}} \mathcal{A}_{g,1} \setminus \mathcal{M}_{g,1} / \mathcal{B}_{g,1} & \longrightarrow \mathcal{V}(3) \\ \varphi  & \longmapsto S^3_\varphi
\end{align*}
which actually restricts to a bijection $\underset{g \rightarrow + \infty}{\operatorname{lim}}  \mathcal{A}_{g,1} \setminus \mathcal{I}_{g,1} / \mathcal{B}_{g,1} \longrightarrow \mathcal{S}(3)$.
\label{rsing}
\end{theorem}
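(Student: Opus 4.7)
The plan is to establish the bijection through well-definedness, surjectivity, and injectivity, and then to handle the restriction to $\mathcal{S}(3)$.

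For well-definedness, I would check that $\varphi \mapsto S^3_\varphi$ descends to $\mathcal{A}_{g,1}\backslash\mathcal{M}_{g,1}/\mathcal{B}_{g,1}$ and is compatible with stabilization. An element $\alpha \in \mathcal{A}_{g,1}$ extends to a self-homeomorphism of $V_g$ fixing the marked disk; gluing this extension with the identity on $-V_g$ yields a homeomorphism between the 3-manifolds associated with $\varphi$ and with its left translate by $\alpha$. The symmetric argument using $\mathcal{B}_{g,1} = \iota\mathcal{A}_{g,1}\iota^{-1}$ handles the right action. Stabilization corresponds to a connected sum with the standard genus-one Heegaard splitting of $S^3$, so the homeomorphism type is preserved in the direct limit. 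For surjectivity, any closed oriented 3-manifold admits a self-indexing Morse function with a unique minimum and a unique maximum; the middle level set is then a Heegaard surface, and fixing parametrizations of the two halves produces a class mapping to $M$.

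Injectivity is the classical Reidemeister-Singer stabilization theorem and is the main obstacle. Given Morse functions $f_1, f_2$ on a fixed closed oriented 3-manifold inducing the two splittings associated with $\varphi_1, \varphi_2$, I would join them by a generic path of smooth functions. Cerf theory then provides a piecewise-smooth deformation whose only singularities are births and deaths of cancelling critical pairs and handle slides between critical points of the same index. A birth or death translates into a stabilization, while a handle slide of index-$1$ critical points modifies the first Heegaard handlebody by an element of $\mathcal{A}_{g,1}$ and a slide of index-$2$ critical points modifies the second by an element of $\mathcal{B}_{g,1}$. After sufficiently many stabilizations, $\varphi_1$ and $\varphi_2$ therefore represent the same double coset.

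For the refinement to $\mathcal{S}(3)$, a Mayer-Vietoris computation for $S^3_\varphi = V_g \cup_{\iota\varphi}(-V_g)$ identifies $H_1(S^3_\varphi)$ with the cokernel of the composition $A \hookrightarrow H \xrightarrow{\iota_*\varphi_*} H \twoheadrightarrow H' = H/A$. When $\varphi \in \mathcal{I}_{g,1}$, $\varphi_*$ is the identity and, by construction of $\iota$, $\iota_*(A)$ is a Lagrangian complement of $A$, so the composition is an isomorphism and $S^3_\varphi \in \mathcal{S}(3)$. Conversely, for $M \in \mathcal{S}(3)$ the surjectivity step provides $\psi \in \mathcal{M}_{g,1}$ with $M \cong S^3_\psi$; the condition $H_1(M) = 0$ forces $\iota_*\psi_*(A)$ to project onto $H'$. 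Using that the images of $\mathcal{A}_{g,1}$ and $\mathcal{B}_{g,1}$ in $\operatorname{Sp}(H)$ generate the whole symplectic group, one can modify $\psi$ within its double coset, possibly after further stabilizations, to land in $\mathcal{I}_{g,1}$, which completes the proof.
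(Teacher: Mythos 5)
The paper offers no proof of this statement to compare against: Theorem \ref{rsing} is the classical Reidemeister--Singer theorem, quoted with references to \cite{rei,sin} and, for the homology-sphere refinement, to \cite{mat87}. Judged on its own, your outline follows the standard modern argument. Well-definedness under the two-sided action and under stabilization is routine, surjectivity is the existence of Heegaard splittings, and injectivity is exactly the stabilization theorem; the Cerf-theoretic sketch you give is the accepted strategy for it, though as written it compresses the hardest part of the theorem into two sentences (translating births, deaths and handle slides into stabilizations and into left or right multiplication by elements of $\mathcal{A}_{g,1}$ and $\mathcal{B}_{g,1}$ requires carrying the parametrizations of the two handlebodies through the whole deformation, and that bookkeeping is where the actual work of the theorem lies). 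Citing the theorem, as the paper does, would be the honest alternative to re-proving it.

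There is, however, one step that is wrong as stated. In the refinement to $\mathcal{S}(3)$, the fact that the images of $\mathcal{A}_{g,1}$ and $\mathcal{B}_{g,1}$ generate $\operatorname{Sp}(2g,\mathbb{Z})$ only expresses an arbitrary symplectic matrix as a long alternating word in the two parabolic subgroups; it does not place $\psi_*$ in the single double coset of the identity, which is what you need in order to replace $\psi$ by an element of $\mathcal{I}_{g,1}$ without changing $S^3_\psi$. The fact you actually need is an LU-type (big Bruhat cell) decomposition over $\mathbb{Z}$: a matrix $M\in\operatorname{Sp}(2g,\mathbb{Z})$ factors as a product of one element of the image of $\mathcal{A}_{g,1}$ and one element of the image of $\mathcal{B}_{g,1}$ if and only if the block of $M$ computing $H_1(S^3_\psi)$ lies in $\operatorname{GL}(g,\mathbb{Z})$, which is exactly the homology-sphere condition; the factorization is obtained by multiplying by a unipotent matrix $\left(\begin{smallmatrix} I & 0 \\ S & I \end{smallmatrix}\right)$ with $S$ symmetric, symmetry being guaranteed by the symplectic relations. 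With that lemma substituted for the generation statement, your argument closes; this is in fact how the refinement is proved in the literature the paper cites.
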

\noindent  The second fact in Theorem \ref{rsing} is written explicitly in \cite{mat87}. One would expect that considering restrictions to deeper groups of the Johnson filtration would yield other topological conditions on the manifold, but this is not the case in low degrees for homology 3-spheres. We call a homology 3-sphere \emph{$J_k$-equivalent} to $S^3$ if it homeomorphic to $S^3_\varphi$ for some $\varphi$ in $J_k$. More generally, we say that two 3-manifolds are \emph{$J_k$-equivalent} if there exists a Heegaard splitting of the first one such that one can compose the gluing map by an element of $J_k$ and get a Heegaard presentation for the second manifold. It is known that $J_k$-equivalence is an equivalence relation.

Morita \cite{mor} has shown that any two homology 3-spheres are $J_2$-equivalent. Pitsch \cite{pit} improved this result to $J_3$-equivalence. They both used the following.

\begin{lemma}
Let $l \geq 1$. If for some genus $g$, we have $ \operatorname{Im}(\tau_k) = \tau_k(\AJk)+ \tau_k(\BJk)$ for all $k \leq l$ then any homology 3-sphere is $J_{l+1}$-equivalent to $S^3$.
\label{pitschlemma}
\end{lemma}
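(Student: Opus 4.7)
The plan is to argue by induction on $k$: starting from any homology 3-sphere $M$ written, after stabilization, as $S^3_\varphi$ with $\varphi \in J_1 = \mathcal{I}$, I will successively modify the gluing map so that it lies in deeper and deeper terms of the Johnson filtration, using the hypothesis at each level to absorb the obstruction into $\mathcal{A}$ and $\mathcal{B}$.

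First I would invoke the Reidemeister-Singer theorem (Theorem \ref{rsing}): for the given homology 3-sphere $M$, after possibly increasing the genus, we may write $M \cong S^3_\varphi$ with $\varphi \in \mathcal{I}_{g',1}$ for some $g' \geq g$. The hypothesis, stated at genus $g$, transfers to genus $g'$ because the stabilization maps $\mathcal{M}_{g,1} \to \mathcal{M}_{g+1,1}$ are compatible with the Johnson filtration, with the Johnson homomorphisms, and with the subgroups $\mathcal{A}$ and $\mathcal{B}$; thus any decomposition in $\operatorname{Im}(\tau_k)$ at genus $g$ is carried along the stabilization embedding to a decomposition at the higher genus.

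For the inductive step, suppose that we have already arranged $M \cong S^3_\varphi$ with $\varphi \in J_k$ for some $1 \leq k \leq l$. Applying the hypothesis at level $k$, there exist $\alpha \in \AJk$ and $\beta \in \BJk$ with $\tau_k(\alpha) + \tau_k(\beta) = \tau_k(\varphi)$ in $D_k(H)$. Since $J_k/J_{k+1}$ is abelian and embeds via $\tau_k$ into $D_k(H)$, this equality lifts to $\alpha^{-1}\varphi\beta^{-1} \in J_{k+1}$. On the other hand, as $\alpha^{-1}$ extends to a self-homeomorphism of $V_{g'}$ and $\beta^{-1}$ extends to a self-homeomorphism of $-V_{g'}$, composing $\varphi$ on the left by $\alpha^{-1}$ and on the right by $\beta^{-1}$ does not change the resulting 3-manifold. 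Hence we may replace $\varphi$ by $\alpha^{-1}\varphi\beta^{-1}$ and iterate. After $l$ steps we obtain $M \cong S^3_\psi$ with $\psi \in J_{l+1}$, which is precisely the statement that $M$ is $J_{l+1}$-equivalent to $S^3$.

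The only delicate point I expect in the argument is the compatibility of the hypothesis with stabilization; everything else is a clean bookkeeping of the Johnson homomorphism. This compatibility is however of a formal nature: once one knows that the stabilization respects the filtrations and the subgroups involved, any lift of $\tau_k(\varphi)$ as a sum of two elements in $\tau_k(\AJk)$ and $\tau_k(\BJk)$ at genus $g$ remains valid after stabilizing, and the inductive step then proceeds purely algebraically inside $D_k(H)$.
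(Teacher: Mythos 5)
The paper does not actually prove this lemma --- it is quoted from Morita and Pitsch --- so there is no in-text proof to compare against; your argument is the standard one those authors use, and its core is correct. By the Reidemeister--Singer theorem one writes $M\cong S^3_\varphi$ with $\varphi\in\mathcal{I}$; since $\tau_k$ is a homomorphism on $J_k$ with kernel $J_{k+1}$, choosing $\alpha\in\AJk$ and $\beta\in\BJk$ with $\tau_k(\alpha)+\tau_k(\beta)=\tau_k(\varphi)$ gives $\alpha^{-1}\varphi\beta^{-1}\in J_{k+1}$, while composing the gluing map on the left by $\mathcal{A}$ and on the right by $\mathcal{B}$ does not change the resulting manifold; iterating from $k=1$ to $k=l$ yields the claim.

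The genuine gap is the stabilization step, which you correctly flag as the delicate point but then dismiss as formal. The hypothesis is stated at one genus $g$, whereas the Heegaard genus of $M$ may exceed $g$ (it is unbounded over $\mathcal{S}(3)$), so you must run the induction at some $g'>g$. What naturality of the stabilization maps gives you is only that the image of $\operatorname{Im}(\tau_k^{(g)})$ under the embedding $D_k(H_{g})\to D_k(H_{g'})$ lands inside $\tau_k(\mathcal{A}_{g',1}\cap J_k)+\tau_k(\mathcal{B}_{g',1}\cap J_k)$. But the element you need to decompose is $\tau_k(\varphi)$ for $\varphi\in J_k$ at genus $g'$, and $\operatorname{Im}(\tau_k)$ strictly grows with the genus (already $\operatorname{Im}(\tau_1)=\Lambda^3H$ does), so $\tau_k(\varphi)$ has no reason to lie in the stabilized copy of $\operatorname{Im}(\tau_k^{(g)})$. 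You therefore need either (i) to prove that the equality $\operatorname{Im}(\tau_k)=\tau_k(\AJk)+\tau_k(\BJk)$ itself propagates from genus $g$ to all higher genera --- a statement about the new part of $\operatorname{Im}(\tau_k^{(g')})$, which does not follow from compatibility of the stabilization maps with $\tau_k$, $\mathcal{A}$ and $\mathcal{B}$ --- or (ii) to read the hypothesis as holding for all sufficiently large genera, which is how it is actually verified in Morita's and Pitsch's applications and in Section \ref{sec5} of this paper (for every $g\geq 4$). With either repair the rest of your argument goes through.
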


Another proof of the fact that any two homology 3-spheres are $J_3$-equivalent is given in \cite[Theorem C]{masY3}. Unfortunately, using Lemma \ref{pitschlemma} for $l=3$ seems complicated: the computations could hardly be made by hand, and we do not know how to build all elements of $J_3$, whereas $J_2$ has well-known generators. Besides, as the result involves $\mathcal{B}$, this lemma only addresses the question of homology 3-spheres, hence manifolds at least $J_1$-equivalent to $S^3$. That is one reason why we want to describe in this paper $\tau_2(\AJ)$ by polarizing some computations in \cite{pit} and by introducing new arguments.

We also know some facts about the first term $\mathcal{I}\mathcal{A} := \mathcal{A}\cap J_1$. A generating set was described by Omori in \cite{omo}. He gives the following theorem, where HBP stands for  ``homotopical bounding pair'', and a genus-$h$ HBP-map is the composition $T_c \circ T_d^{-1}$ of two Dehn twists where $c$ and $d$ are essential simple closed curves cobounding a surface of genus $h$, cobounding an annulus in the handlebody, and not bounding disks in the handlebody.  

\begin{theorem}[Omori]
For $g \geq 3$,  $\mathcal{I}\mathcal{A}_{g,1}$ is normally generated in $\mathcal{A}_{g,1}$ by a genus-1 HBP-map, and hence it is generated by genus-1 HBP-maps.
\label{omori}
\end{theorem}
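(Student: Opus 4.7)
The plan is to establish the result in three stages: first verify that every genus-$1$ HBP-map belongs to $\mathcal{I}\mathcal{A}_{g,1}$; then show that all genus-$1$ HBP-maps are mutually conjugate in $\mathcal{A}_{g,1}$, so that the normal closure in $\mathcal{A}_{g,1}$ of a single such map contains the whole collection; and finally prove that the collection of genus-$1$ HBP-maps generates $\mathcal{I}\mathcal{A}_{g,1}$.

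For the first stage, if $T_c T_d^{-1}$ is a genus-$h$ HBP-map, then $c$ and $d$ cobound a subsurface of $\Sigma_{g,1}$, so by Johnson's analysis of bounding pair maps the product lies in $\mathcal{I}$. Moreover $c$ and $d$ cobound an annulus $A$ inside $V_g$; a regular neighborhood of $A$ in $V_g$ is a solid torus, in which $T_c T_d^{-1}$ is isotopic to a Dehn twist along the core, and this extends through $V_g$. Thus $T_c T_d^{-1}\in\mathcal{A}\cap\mathcal{I}$.

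For the conjugacy stage, I would apply a change-of-coordinates principle inside $\mathcal{A}_{g,1}$. The data of a genus-$1$ HBP configuration is an essential annulus in $V_g$ whose boundary cobounds a genus-$1$ subsurface of $\Sigma_g$, with neither boundary curve bounding a disk in $V_g$. Classical results on incompressible, $\partial$-incompressible annuli in a handlebody imply that such configurations form a single orbit under $\mathcal{A}_g$, and the boundary-relative refinement gives the corresponding statement in $\mathcal{A}_{g,1}$.

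For the generation stage, I would follow the template of Johnson's proof that $\mathcal{I}_{g,1}$ is generated by genus-$1$ bounding pair maps. Given $\phi\in\mathcal{I}\mathcal{A}_{g,1}$, one first writes $\phi$ as a product of BP-maps using Johnson. Each BP-factor must then be promoted to an HBP-factor: using $\mathcal{A}_{g,1}$-conjugation and the identities that compare two BP-maps differing by a handlebody homeomorphism, one replaces each factor by an HBP-map up to a product of HBP-maps and elements whose HBP-expressions are already known. A parallel strategy, closer to the one of \cite{omo}, is to construct a connected and simply-connected simplicial complex on which $\mathcal{A}_{g,1}$ acts with vertex stabilizers generated by genus-$1$ HBP-maps, and then invoke a Putman-style recognition principle. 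Finally, higher-genus HBP-maps are reduced to genus-$1$ ones by the classical lantern-type identities adapted to the handlebody setting.

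The main obstacle is this last reduction step: one must simultaneously respect the Torelli condition and the handlebody condition, whereas Johnson's natural BP-generators of $\mathcal{I}_{g,1}$ do not a priori lie in $\mathcal{A}_{g,1}$. The essential geometric content of the theorem lies in showing that every BP-factor can be replaced, modulo $\mathcal{A}_{g,1}$-conjugation, by an HBP-factor, equivalently in exhibiting the right transitivity of the $\mathcal{A}_{g,1}$-action on a handlebody analogue of the cut-system or disk complex.
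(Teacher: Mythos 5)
This statement is quoted by the paper from Omori's work \cite{omo} and is not proved there, so there is no internal proof to compare against; your proposal therefore has to stand on its own, and as it stands it is an outline rather than a proof. The first stage (a genus-$1$ HBP-map lies in $\mathcal{A}\cap\mathcal{I}$) is fine: Johnson's bounding-pair analysis gives membership in $\mathcal{I}$, and the annulus-twist description gives the extension over $V_g$. But the two remaining stages each contain a genuine gap. For the conjugacy stage, the assertion that all genus-$1$ HBP configurations form a single $\mathcal{A}_{g,1}$-orbit is exactly the kind of ``change of coordinates inside the handlebody group'' that cannot be waved through by citing the classification of essential annuli: an incompressible, $\partial$-incompressible surface in a handlebody is a disk, so the annuli occurring here are $\partial$-compressible, and one must actually classify the pairs (annulus in $V_g$, genus-$1$ subsurface of $\Sigma_{g,1}$ cobounded by its boundary) up to the action of $\mathcal{A}_{g,1}$, keeping track of how the complement of the annulus meets the handle structure. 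No argument is given, and this is one of the places where the real work lives.

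The more serious gap is the generation stage, which is the entire content of the theorem. You correctly identify it as ``the main obstacle,'' but then only name two possible strategies (promote Johnson's BP-factors to HBP-factors, or build a connected simply-connected complex and apply a Putman-style generation criterion) without carrying either out. The first strategy faces precisely the difficulty you state: Johnson's BP-generators of $\mathcal{I}_{g,1}$ have no reason to lie in $\mathcal{A}_{g,1}$, and there is no identity supplied that trades a BP-factor for an HBP-factor times something already controlled; the ``lantern-type identities adapted to the handlebody setting'' that would reduce higher-genus HBP-maps to genus-$1$ ones are likewise asserted, not exhibited (even in the surface case the genus reduction for BP-maps is a nontrivial chain of lantern relations, and here every curve in the chain must additionally bound or cobound correctly in $V_g$). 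The second strategy requires constructing the complex, proving its connectivity and simple connectivity, and identifying the vertex stabilizers, none of which is done. In short, the proposal is a reasonable road map for how one might prove Omori's theorem, but it does not constitute a proof: the transitivity claim and the generation claim are both left as unproved assertions, and they are where all the difficulty is concentrated.
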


\noindent It would be interesting to get the same kind of description for $\AJ$, but we only give in this paper its image by the second Johnson homomorphism, and formulate some questions (see Remark \ref{finalremark}).

But our main motivation for the study of $\AJ$ comes from \cite{lev}. In this paper, Levine defines the \emph{Lagrangian filtration} $(L_k)_{k \geq 1}$ which is a non-separating filtration of the mapping class group. It is not helpful to get an approximation of the mapping class group of the surface, but it is natural to study 3-manifolds presented through Heegaard splittings. The definition of this filtration depends on $\mathbb{A}$, the kernel of the projection $p$ from $\pi$ to $\pi'\simeq  \pi / \mathbb{A}$. The Lagrangian subgroup $A$ is the kernel of the projection $H \rightarrow H'$ which is the image of $\mathbb{A}$ under the projection from $\pi$ to $H$. Also, whenever $f$ is an element of the mapping class group, $f_* \in \operatorname{Sp}(H)$ stands for the action of $f$ on $H$. We still write abusively $f$ for the action of $f$ on the fundamental group.

\begin{definition}
The Lagrangian Torelli group is defined by \[ \mathcal{I}^L := \{ h \in \mathcal{M} \mid h_{*}(A) \subset A \text{ and } h_{*} \text{ is the identity on } A\}. \] For $k \geq 1$, an element $h$ of $\mathcal{M}$ belongs to $L_k$ if it is in $\mathcal{I}^L$ and $p(h((\mathbb{A})) \subset \Gamma_{k+1}\pi'$.
\label{deflt}
\end{definition}

\noindent Note that $L_1 = \mathcal{I}^L$. We remind the following fact from \cite{lev}, describing the intersection of this filtration, which is non-empty.

\begin{lemma}
$L_\infty := \underset{k \geq 1}{\bigcap} L_k$ coincides with $\mathcal{A} \cap L_1$.
\end{lemma}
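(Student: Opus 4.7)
The goal is to prove the two inclusions $\mathcal{A}\cap L_1 \subset L_\infty$ and $L_\infty \subset \mathcal{A}\cap L_1$, using as the central ingredient the residual nilpotence of the free group $\pi' = \pi_1(V_g)$, combined with the characterization of the handlebody group as $\mathcal{A} = \{h \in \mathcal{M} \mid h_\ast(\mathbb{A}) = \mathbb{A}\}$ recalled in the introduction.

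For the easy inclusion, I would start with $h \in \mathcal{A}\cap L_1$. Since $h \in \mathcal{A}$, one has $h_\ast(\mathbb{A}) \subset \mathbb{A} = \ker(p)$, so $p_\ast h_\ast(\mathbb{A}) = \{1\}$, which trivially lies in $\Gamma_{k+1}\pi'$ for every $k \geq 1$. Combined with $h \in L_1 = \mathcal{I}^L$, this shows $h \in L_k$ for all $k$, hence $h \in L_\infty$.

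For the converse, I would take $h \in L_\infty$. By definition $h \in \mathcal{I}^L = L_1$, so it only remains to show $h \in \mathcal{A}$. The assumption gives $p_\ast h_\ast(\mathbb{A}) \subset \Gamma_{k+1}\pi'$ for every $k \geq 1$; intersecting over $k$ yields
\[
p_\ast h_\ast(\mathbb{A}) \;\subset\; \bigcap_{k \geq 1} \Gamma_{k+1}\pi'.
\]
Here the key point is that $\pi' \simeq F_g$ is a free group, and Magnus's classical theorem says that free groups are residually nilpotent, so the right-hand side is trivial. Therefore $h_\ast(\mathbb{A}) \subset \ker(p) = \mathbb{A}$. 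Since $L_\infty$ is a subgroup of $\mathcal{M}$ (being an intersection of subgroups), $h^{-1}$ also lies in $L_\infty$; running the same argument on $h^{-1}$ gives $h_\ast^{-1}(\mathbb{A}) \subset \mathbb{A}$, hence $h_\ast(\mathbb{A}) = \mathbb{A}$ and $h \in \mathcal{A}$.

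There is no real obstacle in this argument: the entire content of the lemma reduces to residual nilpotence of the free group $\pi'$. The only point that deserves a moment of care is remembering to apply the inclusion argument to both $h$ and $h^{-1}$ in order to upgrade $h_\ast(\mathbb{A}) \subset \mathbb{A}$ to the equality $h_\ast(\mathbb{A}) = \mathbb{A}$ that characterizes membership in the handlebody group.
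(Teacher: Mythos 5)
Your proof is correct. The paper itself does not prove this lemma --- it is recalled from Levine's paper \cite{lev} without argument --- but your reasoning is exactly the standard one behind it: the easy inclusion follows from $h_*(\mathbb{A})=\mathbb{A}$ killing $p_*h_*(\mathbb{A})$ outright, and the converse reduces to the residual nilpotence of the free group $\pi'\simeq F_g$ (Magnus), which forces $p_*h_*(\mathbb{A})$ to be trivial and hence $h_*(\mathbb{A})\subset\mathbb{A}$. Your care in upgrading the inclusion $h_*(\mathbb{A})\subset\mathbb{A}$ to an equality via $h^{-1}\in L_\infty$ is well placed (one could alternatively invoke the Hopf property of $\pi/\mathbb{A}\simeq F_g$ to get the same conclusion without passing to the inverse, but your route is perfectly clean). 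No gaps.
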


\noindent It is clear that $J_k \subset L_k$ for all $k \geq 1$. 

\begin{question}
Do we have $L_k = J_k \cdot L_\infty$ for all $k$ ?
\label{levquestion}
\end{question}

This question can be approached inductively, which leads to the next lemma, given by Levine (see \cite[Lemma 6.2]{lev} for a proof).

\begin{lemma}
Suppose $L_k = J_k \cdot L_\infty$, then $L_{k+1} = J_{k+1} \cdot L_\infty$ if and only if \[ \operatorname{Im}(\tau_k) \cap  \operatorname{Ker}(D_k(H) \rightarrow D_k(H')) = \tau_k(\mathcal{A} \cap J_k).\]
\label{lemma37}
\end{lemma}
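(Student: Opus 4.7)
The plan is to deduce the equivalence from a single dictionary lemma that bridges the algebraic kernel condition in $D_k(H)$ with the topological condition defining $L_{k+1}$. Namely, I would first establish that for any $\varphi \in J_k$,
\[ \tau_k(\varphi) \in \operatorname{Ker}\bigl(D_k(H) \to D_k(H')\bigr) \ \Longleftrightarrow \ \varphi \in L_{k+1}. \]
Since $J_k \subset \mathcal{I} \subset \mathcal{I}^L$, the Lagrangian Torelli part of the definition of $L_{k+1}$ is automatic, and it remains to identify the kernel condition with $p\varphi_*(\mathbb{A}) \subset \Gamma_{k+2}\pi'$. The unpacking is direct: for $a \in \mathbb{A}$, the relation $\varphi_*(a) = a \cdot z$ with $z \in \Gamma_{k+1}\pi$ gives $p\varphi_*(a) = p(z) \in \Gamma_{k+1}\pi'$, and the class of $p(z)$ in $\mathcal{L}_{k+1}(H')$ is precisely the value of the derivation $\tau_k(\varphi)$ at $\bar{a} \in A$ followed by the projection $\mathcal{L}_{k+1}(H) \to \mathcal{L}_{k+1}(H')$. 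Vanishing of $\tau_k(\varphi)$ in $H' \otimes \mathcal{L}_{k+1}(H')$ is then exactly the condition $p\varphi_*(\mathbb{A}) \subset \Gamma_{k+2}\pi'$.

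For the direction $(\Leftarrow)$, assume the equality and take $h \in L_{k+1}$. The inductive hypothesis decomposes $h = \varphi \psi$ with $\varphi \in J_k$ and $\psi \in L_\infty \subset \mathcal{A}$. Since $\psi_*(\mathbb{A}) = \mathbb{A}$, we get $p\varphi_*(\mathbb{A}) = p h_*(\mathbb{A}) \subset \Gamma_{k+2}\pi'$, so by the dictionary lemma $\tau_k(\varphi) \in \operatorname{Ker}(D_k(H) \to D_k(H'))$. The assumed equality then yields $\alpha \in \mathcal{A} \cap J_k$ with $\tau_k(\alpha) = \tau_k(\varphi)$, whence $\alpha^{-1}\varphi \in J_{k+1}$. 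I would then rewrite
\[ h = (\alpha \psi)\cdot (\psi^{-1}\alpha^{-1}\varphi\, \psi), \]
where $\alpha\psi \in L_\infty$ (since $\mathcal{A}\cap J_k \subset L_\infty$ and $L_\infty$ is a subgroup) and $\psi^{-1}\alpha^{-1}\varphi\,\psi \in J_{k+1}$ by normality of $J_{k+1}$ in $\mathcal{M}$, so that $h \in L_\infty \cdot J_{k+1} = J_{k+1}\cdot L_\infty$.

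For the direction $(\Rightarrow)$, assume $L_{k+1} = J_{k+1}\cdot L_\infty$. Levine's original observation, which the dictionary lemma recovers, gives $\tau_k(\mathcal{A}\cap J_k) \subset \operatorname{Im}(\tau_k)\cap \operatorname{Ker}(D_k(H)\to D_k(H'))$. Conversely, given $d = \tau_k(\varphi) \in \operatorname{Ker}(D_k(H)\to D_k(H'))$ with $\varphi \in J_k$, the dictionary lemma places $\varphi$ in $L_{k+1}$, so by hypothesis $\varphi = \varphi'\psi'$ with $\varphi' \in J_{k+1}$ and $\psi' \in L_\infty$. Since both $\varphi$ and $\varphi'$ belong to $J_k$, the factor $\psi' = \varphi'^{-1}\varphi$ lies in $J_k$; hence $\psi' \in \mathcal{A}\cap J_k$ and satisfies $\tau_k(\psi') = \tau_k(\varphi) = d$. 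The hard part of the whole argument will be the dictionary lemma itself --- carefully establishing the precise correspondence between the kernel of $D_k(H)\to D_k(H')$ and the nilpotent-Lagrangian condition on $\varphi_*(\mathbb{A})$; once it is in place, everything else follows from the normality of $J_{k+1}$ and elementary group-theoretic rearrangement.
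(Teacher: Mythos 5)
Your proof is correct. The paper itself does not prove this lemma (it defers to Levine's Lemma 6.2), and your argument --- the dictionary identifying, for $\varphi \in J_k$, the condition $\tau_k(\varphi) \in \operatorname{Ker}(D_k(H) \to D_k(H'))$ with $p\varphi_*(\mathbb{A}) \subset \Gamma_{k+2}\pi'$, followed by the group-theoretic rearrangement using the normality of $J_{k+1}$ and the inclusion $\mathcal{A}\cap J_k \subset L_\infty$ --- is exactly the expected one. The only points in the dictionary deserving an explicit word are that identifying $\operatorname{Ker}(D_k(H)\to D_k(H'))$ with $\{f \in \operatorname{Hom}(H,\mathcal{L}_{k+1}(H)) \mid f(A) \subset \operatorname{Ker}(\mathcal{L}_{k+1}(H)\to\mathcal{L}_{k+1}(H'))\}$ uses Poincar\'e duality together with the Lagrangian property $A^{\perp}=A$, and that the condition propagates from homology classes to all of $\mathbb{A}$ because $p\varphi_*(x) = p(\varphi_*(x)x^{-1})$ and $\varphi_*(x)x^{-1}$ has class $\tau_k(\varphi)(\bar{x})$ for every $x \in \mathbb{A}$, not only for the generators $\alpha_i$.
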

\noindent It is shown in \cite[Lemma 6.3]{lev} that $L_1 = J_1 \cdot L_\infty$. Furthermore, the following proposition, describing $\frac{\mathcal{A}\cap J_1}{\AJ}$, was given by Morita in \cite[Lemma 2.5]{mor}: 
\begin{propo}
We have $ \operatorname{Ker}(D_1(H) \rightarrow D_1(H')) = \tau_1(\mathcal{A} \cap J_1)$.
\label{morprop}\end{propo}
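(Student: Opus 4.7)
The plan is to use the isomorphism $\eta_1$ to identify $D_1(H) \simeq \Lambda^3 H$ and $D_1(H') \simeq \Lambda^3 H'$, so that the canonical map $D_1(H) \to D_1(H')$ becomes the projection $\Lambda^3 p : \Lambda^3 H \to \Lambda^3 H'$ induced by $p: H \to H'$. Its kernel is the subgroup $A \wedge \Lambda^2 H \subseteq \Lambda^3 H$ generated by wedges $a \wedge x \wedge y$ with $a \in A$ and $x, y \in H$, and the statement reduces to $\tau_1(\mathcal{A} \cap J_1) = A \wedge \Lambda^2 H$ under this identification.

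For the inclusion $\tau_1(\mathcal{A} \cap J_1) \subseteq A \wedge \Lambda^2 H$, I would first record a convenient criterion: expanding $\xi \in \Lambda^3 H$ in the basis $\{a_i, b_i\}$, one sees that $\xi \in A \wedge \Lambda^2 H$ if and only if the associated map $f_\xi: H \to \Lambda^2 H$ (obtained from $\eta_1$ via $\omega$) satisfies $f_\xi(A) \subseteq A \wedge H$, since any pure $b \wedge b \wedge b$ summand of $\xi$ produces a $b \wedge b$ term in some $f_\xi(a_i)$. Now let $\phi \in \mathcal{A} \cap J_1$ and $h \in A$, and lift $h$ to $\tilde h \in \mathbb{A}$ (which is possible because the image of $\mathbb{A}$ in $H$ equals $A$). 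Since $\phi$ preserves $\mathbb{A}$ and belongs to $J_1$, the element $\phi(\tilde h) \tilde h^{-1}$ lies in $\mathbb{A} \cap \Gamma_2 \pi$. A routine commutator computation, based on the fact that $\mathbb{A}$ is the normal closure of $\{\alpha_1, \ldots, \alpha_g\}$ in $\pi$, shows that the image of $\mathbb{A} \cap \Gamma_2 \pi$ in $\Gamma_2 \pi / \Gamma_3 \pi = \Lambda^2 H$ is contained in $A \wedge H$. Hence $\tau_1(\phi)(h) \in A \wedge H$, and so $\tau_1(\phi) \in A \wedge \Lambda^2 H$ by the criterion.

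For the converse $A \wedge \Lambda^2 H \subseteq \tau_1(\mathcal{A} \cap J_1)$, I would exhibit explicit preimages for a small generating family. Using the identity $u \wedge v = (u + z) \wedge v - z \wedge v$ with $z$ chosen so that $\omega(u + z, v) = \omega(z, v) = 1$ (always possible for $v \neq 0$ by non-degeneracy of $\omega$), one sees that $\Lambda^2 H$ is generated by wedges of symplectic pairs, hence $A \wedge \Lambda^2 H$ is generated by triples $a \wedge u \wedge v$ with $a \in A$ and $\omega(u, v) = 1$. For each such triple, construct a genus-$1$ bounding pair $(c, d)$ on $\Sigma$ with $[c] = [d] = a$, cobounding in $\Sigma$ a genus-$1$ subsurface with symplectic basis $(u, v)$, and cobounding an annulus in $V$. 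By Johnson's formula for bounding-pair maps (the $\tau_1$-analogue of Lemma \ref{morformula}), the HBP-map $T_c T_d^{-1}$ then belongs to $\mathcal{A} \cap J_1$ and satisfies $\tau_1(T_c T_d^{-1}) = a \wedge u \wedge v$.

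The main obstacle is this last geometric realisation: for arbitrary $a \in A$ and arbitrary symplectic pair $(u, v)$, one must produce a bounding pair of the prescribed homological type that sits compatibly inside $V$. This can be treated either by direct inspection (take $c$ to be a band-sum of pushed-off meridians carrying the class $a$, and slide one boundary across a handle supporting $(u, v)$), or more conceptually by invoking Omori's Theorem \ref{omori} together with the $\operatorname{Sp}(H)$-equivariance of $\tau_1$: the image $\tau_1(\mathcal{A} \cap J_1)$ then equals the subgroup of $\Lambda^3 H$ generated by the orbit of a single $\tau_1(\eta_0)$ with $\eta_0$ a genus-$1$ HBP-map, under the subgroup of $\operatorname{Sp}(H)$ stabilising $A$, and one verifies that this orbit generates $A \wedge \Lambda^2 H$.
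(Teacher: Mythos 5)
The paper offers no internal proof to compare against: Proposition \ref{morprop} is quoted from Morita (\cite[Lemma 2.5]{mor}). Judged on its own, the first half of your argument is correct: the identification of $\operatorname{Ker}(D_1(H)\to D_1(H'))$ with $A\wedge\Lambda^2H$, the criterion ``$\xi\in A\wedge\Lambda^2H$ iff $f_\xi(A)\subseteq A\wedge H$'', and the observation that $\phi(\tilde h)\tilde h^{-1}\in\mathbb{A}\cap\Gamma_2\pi$ projects into $A\wedge H$ (a routine computation from the normal generation of $\mathbb{A}$ by the $\alpha_i$) together give $\tau_1(\mathcal{A}\cap J_1)\subseteq A\wedge\Lambda^2H$.

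The realization step, however, has a genuine gap. For a genus-$1$ bounding pair $(c,d)$ cobounding a subsurface $F$, the classes $u,v$ of a symplectic basis of $F$ are carried by curves inside $F$, disjoint from $c$, so necessarily $\omega([c],u)=\omega([c],v)=0$: a BP-map with $[c]=[d]=a$ can only produce triples $a\wedge u\wedge v$ with $u,v\in a^{\perp}$, and your reduction to symplectic pairs does not (and cannot) arrange this orthogonality. The defect is not cosmetic. Consider the contraction $C:\Lambda^3H\to H$, $x\wedge y\wedge z\mapsto\omega(x,y)z-\omega(x,z)y+\omega(y,z)x$. Every realizable generator of your type satisfies $C(a\wedge u\wedge v)=\omega(u,v)\,a\in A$, whereas $C(a_1\wedge b_1\wedge b_2)=b_2\notin A$; so $a_1\wedge b_1\wedge b_2$, which certainly lies in $\operatorname{Ker}(\Lambda^3H\to\Lambda^3H')$, is not even in the subgroup generated by the elements you propose to realize. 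To reach such elements one must allow bounding pairs whose common homology class is \emph{not} in $A$ but which still cobound an annulus in $V$; this is exactly what the paper does in the proof of Proposition \ref{tau1goeritz}, where the pair has class $b_2$ and $\tau_1=b_2\wedge(a_1\wedge b_1+a_2\wedge b_2)=a_1\wedge b_1\wedge b_2$. The fallback via Omori's theorem does not repair this as stated: it requires $g\geq3$, and the real content --- that the $\mathcal{A}$-orbit of a single genus-$1$ HBP value generates all of $A\wedge\Lambda^2H$ --- is precisely the computation left undone. (A smaller slip: requiring $\omega(u+z,v)=\omega(z,v)=1$ forces $\omega(u,v)=0$, so your rewriting identity only treats isotropic pairs; the case $\omega(u,v)\neq 0,1$ needs a separate easy induction.)
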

\noindent Recall that $\tau_1$ is surjective, hence this proposition together with Lemma \ref{lemma37} implies that the answer to Levine's question is positive for $k= 1,2$ (as explained in \cite[Proposition 6.1]{lev}). As for the $k=3$ case, the equality necessary for the induction step is no longer true, as will be shown in the next section: \[ \tau_2(\mathcal{A} \cap J_2) \subsetneq  \operatorname{Im}(\tau_2) \cap  \operatorname{Ker}(D_2(H) \rightarrow D_2(H')).\] Therefore the answer to Question \ref{levquestion} is ``no'' for $k=3$.
\section{The A-trace}
\label{sec4}
In this section, we are still working with two ``abstract" surfaces $\Sigma_{g,1} \subset \Sigma_g$ bounding a handlebody: $\Sigma_g = \partial V_g$. We consider the subgroup $\mathcal{A}$ of $\mathcal{M}$ consisting of elements of the mapping class group of $\Sigma$ extending to $V$. The context differs from \cite{pit}, where there are two handlebodies defined by a Heegaard splitting of $S^3$. In this paper, we wish to investigate about $\tau_2(\AJ)$. Considering that an element of $\mathcal{A}$ globally preserves $\mathbb{A}$, it is not hard to see that the $k$-th Johnson homomorphism sends an element of $\mathcal{A} \cap J_k$ to the sum of an element in $A \otimes \mathcal{L}_{k+1}(H)$ and an element in $H \otimes  \operatorname{Ker} (\mathcal{L}_{k+1}(H) \rightarrow \mathcal{L}_{k+1}(H'))$. Hence we certainly have $\tau_k(\mathcal{A}\cap J_k) \subset \tau_k(J_k) \cap \operatorname{Ker}(D_k(H) \rightarrow D_k(H'))$. It is not easy to see what could be another necessary condition to be in $\tau_k(\AJk )$ . Hence one could wonder, in relation to Question \ref{levquestion}, whether $\tau_2 (\AJ)$ coincides with $ \operatorname{Im}(\tau_2) \cap  \operatorname{Ker}(D_2(H) \rightarrow D_2(H'))$. We show in this section that it is not the case.

\subsection{Examples of elements of $\AJ$}

Here, we describe three families of examples of elements in $\AJ$. We start by recalling some facts about the generation of $\mathcal{A}$ and $J_2$.

First, a Dehn twist along a simple closed curve belongs to the handlebody group if and only if this curve bounds a disk in the handlebody $V$. Such a meridional twist can also be performed half-way. Furthermore, if two curves $\delta$ and $\delta '$ cobound a properly embedded annulus in $V$, one can perform an annulus twist in the handlebody and see that $T_{\delta} T_{\delta '}^{-1}$ is an element of $\mathcal{A}$. The handlebody group is generated by meridional twists, meridional half-twists and annulus twists. See \cite{hensel} for more details. 

As for the second term in the Johnson filtration, it is generated by BSCC maps \cite{joh2}, i.e. Dehn twists along simple closed curves bounding in the surface. Also, it is a classical fact from \cite{mor91} that $[J_k, J_l] \subset J_{k+l}$, so any commutator of two elements of the Torelli group are in the Johnson subgroup $\mathcal{K} = J_2$.

Knowing these facts we can build three families of elements in $\AJ$: \begin{enumerate}
	\item Dehn twists along bounding simple closed curves, which also bound disks in the handlebody.
	\item Annulus twists along two simple closed curves which are both bounding subsurfaces in the surface (but not necessarily bounding disks in the handlebody).
	\item Commutators of the group $\mathcal{A}\cap J_1$, the Torelli handlebody group, for which a generating system is recalled in Theorem \ref{omori}.
\end{enumerate}


We shall now define a map $ \operatorname{Tr}^A :  \operatorname{Ker}(D_2(H) \rightarrow D_2(H')) \rightarrow S^2(H')$, and show that it vanishes on all the image of $\AJ$ under $\tau_2$.

\label{section51}
\subsection{Definition of the A-trace}

We consider the following filtration on $D_k(H)$, which only depends on the Lagrangian subgroup $A$ of $H$. For $-1 \leq l \leq k+1$, we set: 
\[ \mathcal{F}_l = \operatorname{Span} \bigg < \begin{matrix}
\text{expansion of trees with $k$ nodes (and halfs of symmetric trees when $k$ is even)} \\ \text{ with at least}  ~ l+1 ~ \text{leaves vanishing in } H'
\end{matrix}  \bigg >.\]
\noindent Below, for $k=2$, we identify trees and their expansions (see Remark \ref{remtree}).

We consider the following diagram, where all vertical arrows are induced by the projection from $H$ to $H'$: 
\[
\begin{tikzcd}
0 \arrow[r] & A \otimes \mathcal{L}_{k+1}(H) \arrow[r] \arrow[d, two heads] & H \otimes \mathcal{L}_{k+1}(H) \arrow[r] \arrow[d, two heads, "p"] & H' \otimes \mathcal{L}_{k+1}(H) \arrow[r] \arrow[d, two heads] & 0 \\
0 \arrow[r] & A \otimes \mathcal{L}_{k+1}(H')\arrow[r] & H \otimes \mathcal{L}_{k+1}(H') \arrow[r]  & H' \otimes \mathcal{L}_{k+1}(H') \arrow[r]  & 0.
\end{tikzcd}\]

\noindent We claim that the following holds: 

\begin{lemma}Set $K:=  \operatorname{Ker}( \mathcal{L}_{k+1}(H) \rightarrow \mathcal{L}_{k+1}(H'))$. We have: 
\begin{enumerate}[label=\normalfont(\roman*)]
\item $\mathcal{F}_{-1} = D_k(H)$
\item $\mathcal{F}_{0} \subset D_k(H) \cap p^{-1}(A \otimes \mathcal{L}_{k+1}(H')) =  \operatorname{Ker}(D_k(H) \rightarrow D_k(H'))$
\item $\mathcal{F}_{1} \subset D_k(H) \cap  \operatorname{Ker}(p) = D_k(H) \cap (H \otimes K)$
\end{enumerate}
\label{lemmainclusions}
\end{lemma}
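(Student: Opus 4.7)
All three statements are direct computations based on the naturality of the expansion map. Throughout, I use the description of $D_k(H)$ from Remark \ref{remtree} as a lattice in $\mathcal{A}_k^t(H) \otimes \mathbb{Q}$ generated by expansions of trees and, for even $k$, halfs of symmetric trees.

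Statement (i) is essentially tautological: ``at least $0$ leaves vanishing in $H'$'' is a vacuous condition, so $\mathcal{F}_{-1}$ is by definition spanned by the chosen generating set of $D_k(H)$.

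For (ii) and (iii), the key fact is that $p : H \to H'$ extends to a graded Lie algebra morphism $p_* : \mathcal{L}(H) \to \mathcal{L}(H')$ whose kernel contains every iterated bracket that involves a factor from $A = \operatorname{Ker}(p)$. Given a tree $T$ of degree $k$ with $l+1 \geq 1$ leaves colored in $A$, I expand $\eta(T) = \sum_{x \in v_1(T)} l_x \otimes T^x$ and analyze each summand according to whether $x$ is itself an $A$-leaf. If $x$ is an $A$-leaf, then $l_x \in A$ and the image $l_x \otimes p_*(T^x)$ already belongs to $A \otimes \mathcal{L}_{k+1}(H')$. If $x$ is not an $A$-leaf, then the rooted tree $T^x$ retains all $l+1 \geq 1$ original $A$-leaves, so $p_*(T^x) = 0$.

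Summing over $x$, for $l = 0$ this yields $(\operatorname{id} \otimes p_*)(\eta(T)) \in A \otimes \mathcal{L}_{k+1}(H')$, which is the inclusion in (ii); the equality with $\operatorname{Ker}(D_k(H) \to D_k(H'))$ is a direct diagram chase in the displayed commutative diagram, since the kernel of the composite $p \otimes p_*$ coincides with the preimage of $A \otimes \mathcal{L}_{k+1}(H')$ under the middle vertical arrow. For $l = 1$, the $A$-leaf case also yields $p_*(T^x) = 0$, since $T^x$ rooted at one $A$-leaf still contains at least one other $A$-leaf; hence every term vanishes and $\eta(T) \in \operatorname{Ker}(\operatorname{id} \otimes p_*) = H \otimes K$, giving (iii). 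Halfs of symmetric trees (relevant when $k$ is even) fit into the same scheme, since each label appears twice and so a single label in $A$ automatically produces at least two $A$-leaves; the above analysis then applies summand by summand to the two terms of the expansion. I do not anticipate a serious obstacle here — everything reduces to the elementary fact that $p_*$ is a Lie morphism killing $A$ — with the only care needed being the bookkeeping of $A$-leaves in rooted versus unrooted trees.
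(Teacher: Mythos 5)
Your proof is correct and follows essentially the same route as the paper's: expand the tree as $\sum_x l_x \otimes T^x$ and observe that every term whose Lie-algebra part retains an $A$-leaf dies under $p_*$, leaving only the term rooted at the $A$-leaf (for $\mathcal{F}_0$) or nothing at all (for $\mathcal{F}_1$). Your extra remarks on the diagram chase for the equality in (ii) and on halfs of symmetric trees are consistent with, and slightly more explicit than, the paper's argument.
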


\begin{proof}
We have seen in Section \ref{sec2} that expansions of half symmetric trees and expansions of trees lie in $D(H)$. If a tree with $k$ leaves has at least one leaf in $A$, then after expanding the tree, there will be $k-1$ terms in which such leaf is involved in the free Lie algebra part. This $k-1$ terms will vanish after projecting on $\mathcal{L}(H')$. The remaining term will be a tensor product of the root vanishing in $H'$ and some element in $\mathcal{L}(H)$. This shows that $p(\mathcal{F}_0) \subset A \otimes \mathcal{L}_{k+1}(H')$. If the tree has at least two leaves in $A$, then the expansion gives $k$ terms such that the part in the free Lie algebra vanish in $\mathcal{L}(H')$. Hence $p(\mathcal{F}_1)= {0}$.
\end{proof}

\begin{rem}
In fact all the inclusions in Lemma \ref{lemmainclusions} are equalities, but we shall not need this.
\end{rem}
\begin{rem}
The graded space associated with the filtration $(\mathcal{F}_l)_{-1 \leq l \leq k+1} $ can be identified to the space $\mathcal{A}^t_k(A \oplus H')$ of tree-like Jacobi diagrams colored by $A\oplus H'$ with degree defined by the number of $A$-colored leaves shifted by 1 (the same space appears with a different grading in \cite{ver}).
\label{remvera} 
\end{rem}
Besides, the long exact sequence in relative homology for the handlebody
\[
\begin{tikzcd}
0 \arrow[r] & H_2(V, \partial V; \mathbb{Z}) \arrow[r] &  H \simeq H_1( \partial V; \mathbb{Z}) \arrow[r]  & H' = H_1( V; \mathbb{Z}) \arrow[r]  & 0
 \end{tikzcd}\]
 
\noindent gives a canonical isomorphism $H_2(V, \partial V; \mathbb{Z}) \simeq A$. Now, Poincaré-Lefschetz duality $$H_2(V,\partial V;\mathbb{Z})\simeq~H^1(V; \mathbb{Z})\simeq~(H')^*$$ gives an intersection pairing \[
\omega' : A \otimes H' \longrightarrow \mathbb{Z} \] \noindent which is also induced by $\omega$ in the obvious way.
Then, by considering the injection $i$ of $\mathcal{L}(H')$ in the tensor algebra $T(H')$, and the contraction $({\omega'})^{1,2}$ of the first two tensors in $A \otimes T(H')$ by $\omega'$, we define the following map: 
\[
\begin{tikzcd}
 \operatorname{Tr}^A : \mathcal{F}_0 \arrow[r, "p"] &  A\otimes \mathcal{L}_{k+1}(H') \arrow[r, "i"]  & A \otimes T_{k+1}(H') \arrow[r, "(\omega')^{1,2}"]  &  T_{k}(H') \arrow[r, two heads] &  S^k(H').
 \end{tikzcd}\]
 
\begin{rem}
The definition of the homomorphism $\operatorname{Tr}^A$ is inspired by the trace $\operatorname{Tr}$ defined by Morita in \cite{mor93}, but the reader should be aware that the following diagram  does not commute: \[ \begin{tikzcd} \mathcal{F}_0 \arrow[r,hook] \arrow[d, "\operatorname{Tr}^A"] & D_k(H) \arrow[d, "\operatorname{Tr}"] \\ S^k(H') & S^k(H). \arrow[l, two heads] \end{tikzcd}\]
\end{rem}

The first thing to notice about $\operatorname{Tr}^A$ is that it vanishes on $\mathcal{F}_1$ as $p$ already vanishes on this space. Hence it can be thought of as a map starting from $\mathcal{F}_0 / \mathcal{F}_1$. Therefore, to compute this map, we can consider only trees with one leaf colored by $A$ and the other leaves non-trivial in $H'$. The map $\operatorname{Tr}^A$ is thus defined on the graded space associated with the filtration $\mathcal{F}$, which corresponds to diagrams whose leaves are colored by $A$ or $H'$ (see Remark \ref{remvera}). On such a space, a direct computation shows that there is a practical way of computing $ \operatorname{Tr}^A$: take the leaf colored by $A$ and consider all possible ways to contract it by $\omega'$ with the other leaves in $H'$. One gets a sum of oriented circles with leaves in $H'$ (the orientation being given by drawing an arrow from the leaf in $A$ to the other leaf).  One can read this oriented diagram in $S^k(H')$, the inward leaves contributing with a minus sign. We now denote by $x'$ the class in $H'$ of an element $x$ in $H$. We will also omit some tensor product notations when it is clear from context.

\begin{example}
For $a\in A$ and $c,d,e \in H$ we have:

\begin{equation}
\operatorname{Tr}^A\Big(\ltree{$a$}{$c '$}{$d'$}{$e'$}\Big) = \omega(a,e) d'c' - \omega(a,d) e'c'.
\label{eqexemple}
\end{equation}

\noindent Indeed, in $S^2(H')$, we have: 
\begin{align*}
(\omega')^{1,2} \circ i \circ p \Big(\ltree{$a$}{$c$}{$d$}{$e$}\Big) &= (\omega')^{1,2} \circ i (a \otimes [[e',d'],c'])  \\ &=  (\omega')^{1,2}(ae'd'c' -ad'e'c' - ac'e'd' + ac'd'e')\\ &= \omega(a,c)(d'e' -e'd') - \omega(a,d)e'c' + \omega(a,e)d'c' \\ &= \omega(a,e) d'c' - \omega(a,d) e'c' \quad  \in S^2(H').
\end{align*}
\label{ex43}
\end{example}

\begin{rem}
It is worth noting that the restriction of the Johnson filtration to $\A$ is compatible with the conjugation by elements of $\A$. This induces a $\rho_0(\A)$-module structure on the quotients $\A \cap J_k / \A \cap J_{k+1}$, where $\rho_0(\A)$ is the image of $\A$ under the representation $\rho_0 : \mathcal{M} \rightarrow \operatorname{Sp}(H)\subset \operatorname{Aut}(H)$. The action of $\rho_0(\A)$ on $H$ induces an action on $H'$ (and thus on $S^k(H')$). The map $\operatorname{Tr}^A$ is equivariant relatively to these actions.
\label{reminvariance}
\end{rem}

We now focus on the case $k=2$. One could check by direct computation that this map vanishes on the image by $\tau_2$ of  all elements of $ \AJ$ of the three kinds described in Section \ref{section51}. Instead of that, we will show in the next section that the map actually vanishes on the whole of $\tau_2(\mathcal{A} \cap J_2)$. Nevertheless, this map is not trivial on $\mathcal{F}_0 \cap \operatorname{Im}(\tau_2)$, as we shall now see. We fix a choice $(a_i, b_i)_{1 \leq i \leq g}$ of a symplectic basis for $H$, such that $A$ is generated by the family $(a_i)_{1 \leq i \leq g}$. For instance, we consider the basis of $H$ induced by a system of meridians and parallels $(\alpha_i, \beta_i)_{1 \leq i \leq g}$ as explained in Section \ref{sec1}. Let us define two families of elements in $\mathcal{F}_0$, depending of the previous choice: 

\begin{align*}
T_1^{ij} & := \ltree{$a_i$}{$b_j$}{$b_j$}{$b_i$} \quad i \neq j, \\
T_2^{kk',ij} & := \ltree{$a_k$}{$b_i$}{$b_j$}{$b_k$} + \ltree{$a_{k'}$}{$b_i$}{$b_j$}{$b_{k'}$} \quad i \neq j, k \neq j, k' \neq j.
\end{align*}

\begin{lemma}
The elements $T_1^{ij}$ and $T_2^{kk',ij}$ belong to $ \operatorname{Im}(\tau_2)$ and \begin{align*}
 \operatorname{Tr}^A (T_1^{ij}) &= b_j'b_j' \\   \operatorname{Tr}^A(T_2^{kk',ij}) &= 2b_i'b_j'.
\end{align*}
\label{cextrees}
\end{lemma}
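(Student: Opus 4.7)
The strategy is twofold: first, invoke Theorem \ref{theoremtraces}, which identifies $\operatorname{Im}(\tau_2)$ with $\operatorname{Ker}(\operatorname{Tr}^{as})$, so that membership in $\operatorname{Im}(\tau_2)$ can be tested by a direct $\operatorname{Tr}^{as}$-computation; second, substitute the specific leaf-colorings into the formula derived in Example \ref{ex43} to compute $\operatorname{Tr}^A$. This replaces the tedious task of exhibiting each tree as an explicit $\mathbb{Z}$-combination of Morita's generators $u \odot v$ and $\ltree{$u_1$}{$v_1$}{$u_2$}{$v_2$}$ of $\operatorname{Im}(\tau_2)$.

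For the membership claim I would apply the defining formula of $\operatorname{Tr}^{as}$ to $T_1^{ij}$: using $i \neq j$, we have $\omega(a_i,b_j)=\omega(b_j,b_i)=\omega(b_j,b_j)=0$, which kills three of the four terms, and the remaining term $\omega(a_i,b_i)(b_j\wedge b_j)$ vanishes by antisymmetry. For each of the two summands of $T_2^{kk',ij}$, the same formula produces $b_i\wedge b_j$ (only the term involving $\omega(a_k,b_k)=1$, resp.\ $\omega(a_{k'},b_{k'})=1$, survives, using $k\neq j$ and $k'\neq j$), so that $\operatorname{Tr}^{as}(T_2^{kk',ij})=2(b_i\wedge b_j)=0$ in $\Lambda^2(H/2H)$. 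Theorem \ref{theoremtraces} then gives $T_1^{ij},T_2^{kk',ij}\in\operatorname{Im}(\tau_2)$.

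For the $\operatorname{Tr}^A$-computation, note that in each tree the unique $A$-colored leaf is $a_i$ (respectively $a_k$ or $a_{k'}$), so each tree lies in $\mathcal{F}_0$ by Lemma \ref{lemmainclusions}, and by the discussion preceding Example \ref{ex43} only the $a$-rooted summand of the expansion contributes to $\operatorname{Tr}^A$. I would then substitute directly into the formula \eqref{eqexemple}. For $T_1^{ij}$, taking $a=a_i$, $c=b_j$, $d=b_j$, $e=b_i$ yields
\[
\operatorname{Tr}^A(T_1^{ij})=\omega(a_i,b_i)\,b_j'b_j'-\omega(a_i,b_j)\,b_i'b_j'=b_j'b_j',
\]
using $i\neq j$. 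For each summand of $T_2^{kk',ij}$, taking $a=a_k$, $c=b_i$, $d=b_j$, $e=b_k$ (respectively the $k'$-analogue) gives $b_j'b_i'$, since $\omega(a_k,b_j)=\omega(a_{k'},b_j)=0$; the sum is therefore $2b_i'b_j'$.

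There is no real conceptual obstacle: the whole argument reduces to bookkeeping with the pairing $\omega(a_i,b_j)=\delta_{ij}$, exploiting the three index inequalities in the definitions of $T_1^{ij}$ and $T_2^{kk',ij}$ to annihilate the unwanted terms. The one non-routine ingredient is the algebraic identification $\operatorname{Im}(\tau_2)=\operatorname{Ker}(\operatorname{Tr}^{as})$ provided by Theorem \ref{theoremtraces}, without which one would have to construct each $T_\ell^{\cdots}$ as an explicit combination of Morita's symplectic generators.
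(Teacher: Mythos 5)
Your proposal is correct and follows exactly the paper's own argument: membership in $\operatorname{Im}(\tau_2)$ is deduced from the vanishing of $\operatorname{Tr}^{as}$ via Theorem \ref{theoremtraces} (using the exactness of the top row of diagram \eqref{diag:traces}), and the values of $\operatorname{Tr}^A$ are obtained by direct substitution into formula \eqref{eqexemple}. The only difference is that you spell out the bookkeeping with $\omega(a_i,b_j)=\delta_{ij}$ in more detail than the paper does.
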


\begin{proof}
By definition of $\operatorname{Tr}^{as}$, we get $ \operatorname{Tr}^{as}(T_1^{ij}) = \omega(a_i,b_i)b_j \wedge b_j = 0 \in \Lambda^2(H/2H)$ and $ \operatorname{Tr}^{as}(T_2^{kk',ij}) = 2\omega(a_k,b_k)b_i \wedge b_j = 0 \in \Lambda^2(H/2H)$. Therefore, by Theorem \ref{theoremtraces},we have $T_1^{ij}, T_2^{kk',ij} \in \operatorname{Im}(\tau_2)$. For the computation of $\operatorname{Tr}^A$ on $T_1^{ij}$ and $T_2^{kk',ij}$, we use formula \eqref{eqexemple}.
\end{proof}
We embed $S^2(H')$ in $(H' \otimes H')^{\mathfrak{S}_2}$ by sending $h_1'h_2' \in S^2(H')$ to $h_1' \otimes h_2' + h_2' \otimes h_1' \in (H' \otimes H')^{\mathfrak{S}_2}$. It defines a restriction map $(H' \otimes H')^* \rightarrow S^2(H')^*$. Using the duality $A \simeq H'^*$ given by the map $a \mapsto \omega' (a,-)$, we then have an isomorphism from $(A \otimes A)^{\mathfrak{S}_2}$ to $(H'^* \otimes H'^*)^{\mathfrak{S}_2}$ which is a subspace of $(H'^* \otimes H'^*) \simeq (H' \otimes H')^*$. Hence we obtain a well-defined map $r$ from $(A \otimes A)^{\mathfrak{S}_2}$ to $S^2(H')^*$:
\begin{equation}
r: (A \otimes A)^{\mathfrak{S}_2} \longrightarrow (H'^* \otimes H'^*)^{\mathfrak{S}_2} \longrightarrow (H'^* \otimes H'^*) \simeq (H' \otimes H')^*\longrightarrow S^2(H')^*
\label{req}
\end{equation}
 Notice that $r(a_i\otimes a_i) = 2(b_i'b_i')^*$ and $r(a_i \leftrightarrow a_j) = 2(b_i'b_j')^*$, which shows that $r/2$ is well-defined, surjective, and hence is an isomorphism. We can now define $ \widetilde{\operatorname{Tr}}^A$ as the bilinear map
\[
\begin{array}{rcl}
\widetilde{\operatorname{Tr}}^A : \mathcal{F}_0 \times (A \otimes A)^{\mathfrak{S}_2} & \longrightarrow & \mathbb{Z}\\
(T,s) &\longmapsto & \frac{1}{2}r(s)(\operatorname{Tr}^A(T)).
\end{array}\]

\noindent We can also regard $\widetilde{\operatorname{Tr}}^A$ as a bilinear map: 
$\mathcal{F}_0/\mathcal{F}_1 \times (A \otimes A)^{\mathfrak{S}_2} \rightarrow  \mathbb{Z}.$

\begin{rem}
Notice that $\operatorname{Tr}^A$ depends only on the choice of the Lagrangian subgroup $A \subset H$.
\end{rem}
\begin{rem}
Since $r/2$ is an isomorphism, for any $T \in \mathcal{F}_0$, we have that $\widetilde{\operatorname{Tr}}^A(T,s) = 0$ for all $s \in (A \otimes A)^{\mathfrak{S}_2}$ if and only if $\operatorname{Tr}^A(T)= 0$.
\label{newrem}
\end{rem}

\label{sect52}
\subsection{Relating $ \operatorname{Tr}^A$ with the Casson invariant}

In this section, we review Morita's decomposition of the Casson invariant in \cite{mor} and use it to show the following: 

\begin{theorem}
The Casson invariant induces a map $\mu : D_2(H) \times \mathcal{M} \rightarrow \mathbb{Z}$, which is not bilinear. Its restriction to $\mathcal{F}_0 \times  \mathcal{I}^L$ is bilinear and fits into a commutative diagram \[
\begin{tikzcd}
\mathcal{F}_0  ~ \times ~ \mathcal{I}^L \arrow[r, "\mu"] \arrow[d, two heads, xshift =2.5ex, , "\sigma"] \arrow[d, two heads,xshift = -3ex] &\mathbb{Z} \\
\mathcal{F}_0/\mathcal{F}_1  \times (A \otimes A)^{\mathfrak{S}_2} \arrow[ru, start anchor = east, " \widetilde{\operatorname{Tr}}^A", bend right = 25] &
\end{tikzcd}.\] \noindent Furthermore, for any  $T \in \tau_2(\mathcal{A} \cap J_2)$ and any $\varphi \in \mathcal{I}^L$, $\mu(T,\varphi) = 0$. Consequently, $ \operatorname{Tr}^A$ vanishes on $\tau_2(\mathcal{A} \cap J_2)$.
\label{theoremtrA}
\end{theorem}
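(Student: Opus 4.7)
The plan is to interpret $\operatorname{Tr}^A$ as a Casson-invariant obstruction in the spirit of Morita \cite{mor} and Pitsch \cite{pit}: build $\mu$ as a defect of $\lambda$ under left multiplication, match it with $\widetilde{\operatorname{Tr}}^A$ via Morita's decomposition, and use that $\mathcal{A}$ does not change the underlying 3-manifold. I begin by constructing the vertical map $\sigma : \mathcal{I}^L \twoheadrightarrow (A \otimes A)^{\mathfrak{S}_2}$. If $\varphi \in \mathcal{I}^L$, then $\varphi_*$ fixes $A$ pointwise; since $A$ is Lagrangian, symplecticity forces $\varphi_*$ to also be the identity on $H/A \simeq H'$, so $\bar h \mapsto \varphi_*(h) - h$ gives a well-defined linear map $\alpha_\varphi : H' \to A$. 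A short computation shows that symplecticity of $\varphi_*$ is equivalent to the self-adjointness identity $\omega'(\alpha_\varphi(\bar h_1), \bar h_2) = \omega'(\alpha_\varphi(\bar h_2), \bar h_1)$, which, via the duality $A \simeq (H')^*$ from $\omega'$, corresponds to an element $\sigma(\varphi) \in (A \otimes A)^{\mathfrak{S}_2}$. Surjectivity of $\sigma$ is witnessed on generators: the meridional Dehn twist $T_{\alpha_i}$ lies in $\mathcal{I}^L$ and realizes $a_i \otimes a_i$, and a twist along a curve homologous to $\alpha_i + \alpha_j$ realizes $a_i \leftrightarrow a_j$.

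Next, for $\psi \in J_2$ and $\varphi \in \mathcal{I}^L$, the composition $\psi\varphi$ lies in $\mathcal{I}^L$ and so $S^3_{\psi\varphi}$ is a homology 3-sphere. I set
\[ \mu(\tau_2(\psi), \varphi) := \lambda(S^3_{\psi\varphi}) - \lambda(S^3_\varphi), \]
where $\lambda$ denotes the Casson invariant. Morita's decomposition of $\lambda$ on $\mathcal{K}$ expresses this difference as a bilinear contraction of $\tau_2(\psi) \in D_2(H)$ against the Lagrangian data of $\varphi$; in particular it depends only on the class $\tau_2(\psi)$. Extending linearly in the first variable, and in some arbitrary way in the second (the diagram only probes $\mathcal{I}^L$), produces $\mu : D_2(H) \times \mathcal{M} \to \mathbb{Z}$, linear in $T$ but not bilinear.

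The main obstacle is the commutativity of the diagram, namely the identification $\mu(T, \varphi) = \widetilde{\operatorname{Tr}}^A([T], \sigma(\varphi))$ for $T \in \mathcal{F}_0$ and $\varphi \in \mathcal{I}^L$. I would check this on the tree generators of $\mathcal{F}_0/\mathcal{F}_1$ of the form $\ltree{$a$}{$c'$}{$d'$}{$e'$}$ treated in Example \ref{ex43}, and on the generators $a_i \otimes a_i$ and $a_i \leftrightarrow a_j$ of $(A \otimes A)^{\mathfrak{S}_2}$. Morita's formula, when unfolded on these generators, produces exactly the composition $(\omega')^{1,2} \circ i \circ p$ defining $\operatorname{Tr}^A$: the $A$-colored leaf is contracted by $\omega'$ against $\alpha_\varphi$, while the remaining $H'$-colored leaves pair symmetrically with the image of $\alpha_\varphi$ in $A$; the factor $\tfrac{1}{2}$ in $\widetilde{\operatorname{Tr}}^A$ absorbs the $\mathfrak{S}_2$-symmetrization. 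Matching coefficients — including the careful accounting of signs coming from the $AS$ and $IHX$ relations and from the orientation of internal vertices — is the technical heart of the argument.

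Granted commutativity, the vanishing on $\AJ$ is automatic: if $\psi \in \AJ$, then $\psi \in \mathcal{A}$ extends over the handlebody $V$, so $S^3_{\psi\varphi} \cong S^3_\varphi$ and $\mu(\tau_2(\psi), \varphi) = 0$ for every $\varphi \in \mathcal{I}^L$. The diagram then gives $\widetilde{\operatorname{Tr}}^A(\tau_2(\psi), \sigma(\varphi)) = 0$ for every such $\varphi$, and the surjectivity of $\sigma$ from the first step, together with Remark \ref{newrem}, forces $\operatorname{Tr}^A(\tau_2(\psi)) = 0$.
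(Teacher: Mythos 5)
Your overall strategy --- realize $\operatorname{Tr}^A$ as a Casson-invariant obstruction via Morita's decomposition, use that composing the gluing map with an element of $\mathcal{A}$ does not change the resulting $3$-manifold, and finish with the surjectivity of $\sigma$ --- is indeed the paper's strategy, and your construction of $\sigma$ (including the check that symplecticity forces $\varphi_*$ to be the identity on $H/A$ and that $\alpha_\varphi$ is self-adjoint) is correct. But your definition of $\mu$ is wrong: the quantity $\lambda(S^3_{\psi\varphi}) - \lambda(S^3_\varphi)$ does \emph{not} depend only on $\tau_2(\psi)$. Taking $\varphi = \operatorname{id}$ already exhibits the problem: you would need $\psi \mapsto \lambda(S^3_\psi)$ to factor through $\tau_2$, i.e.\ to vanish on $J_3$. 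It does not: Morita's decomposition reads $-\lambda_j = \frac{1}{24}d + q_j$ on $\mathcal{K}$, where only $q_j$ factors through $\tau_2$, and the core $d$ is non-trivial on $J_3$ (by Pitsch's theorem every homology $3$-sphere is of the form $S^3_\psi$ with $\psi \in J_3$, and many have non-zero Casson invariant). Consequently your $\mu$ is ill-defined on $D_2(H)\times\mathcal{M}$, and the diagram cannot commute with it: $\widetilde{\operatorname{Tr}}^A(T,\sigma(\operatorname{id}))=0$ for every $T\in\mathcal{F}_0$, while $\lambda(S^3_\psi)$ can be made non-zero by replacing $\psi$ with $\psi\rho$, $\rho\in J_3$, without changing $\tau_2(\psi)$. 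The paper's $\mu$ is instead the difference $\lambda_j(\psi)-\lambda_{j\circ\varphi}(\psi)$ of the Casson invariants of the \emph{same} regluing performed along the two Heegaard embeddings $j$ and $j\circ\varphi$ (Lemma \ref{lemma410}); there the embedding-independent core $\frac{1}{24}d(\psi)$ cancels and only $q_{j\circ\varphi}-q_j = -(\varepsilon_j-\varepsilon_{j\circ\varphi})\circ\theta$ survives, which is manifestly determined by $\tau_2(\psi)$ and linear in it. Note also that with the correct $\mu$ the vanishing for $\psi\in\mathcal{A}\cap J_2$ is only immediate when $\varphi\in\mathcal{A}$ (both $S^3(j,\psi)$ and $S^3(j\circ\varphi,\psi)$ must be $S^3$), which is why the paper needs $\sigma$ restricted to $\mathcal{I}^L\cap\mathcal{A}$ to remain onto (Remark \ref{remI}); your surjectivity witnesses happen to lie in $\mathcal{A}$, but your argument does not flag this as necessary.

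Separately, even granting a correct definition of $\mu$, you reduce the commutativity of the diagram to ``matching coefficients'' without carrying the computation out. That computation is the bulk of the proof: one must determine how the linking matrix changes under $\varphi$ (equation \eqref{eqstartwo}), derive the explicit formula for $\mu$ on trees and on halves of symmetric trees (Lemma \ref{computationmu}), repackage it as $(\frac{1}{2}\omega_S+\omega_\delta)\circ\operatorname{Tr}^{\omega_S}$ (Corollary \ref{cor513}), verify that this expression vanishes on $\mathcal{F}_1$, and only then compare it with $\widetilde{\operatorname{Tr}}^A$ on trees with exactly one $A$-coloured leaf. As written, the proposal asserts the conclusion of that verification rather than proving it.
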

The map $\sigma$ is defined in the following way. Recall from Definition \ref{deflt} that $\mathcal{I}^L$ is the Lagrangian Torelli group. For $f \in \mathcal{I}^L$ and $h \in H$, the difference $f_*(h)-h$ only depends on the class of $h$ in $H'$, and is in $A$ because of the very definition of $\mathcal{I}^L$. Hence we get a map
\[ 
\mathcal{I}^L \longrightarrow  \operatorname{Hom}(H',A) \simeq (H')^* \otimes A \simeq A \otimes A \]whose target restricts to $(A \otimes A)^{\mathfrak{S}_2}$ because of the symplectic condition. Hence we get a homomorphism $\sigma : \mathcal{I}^L \rightarrow (A \otimes A)^{\mathfrak{S}_2}$.
Let us describe $\sigma$ in terms of the symplectic basis described in Section \ref{sect52}. It is known that the canonical map from $\mathcal{M}$ to $\operatorname{Sp}(H)$ given by the action in homology is surjective. Using the symplectic basis, we identify $\operatorname{Sp}(H)$ with the group $\operatorname{Sp}(2g,\mathbb{Z})$ of matrices $M$ such that $  M^TJM = J$ where $J:=\begin{pmatrix}
0 & Id\\
-Id & 0
\end{pmatrix}$, i.e. matrices $M =\begin{pmatrix}
A & B\\
C & D
\end{pmatrix}$ where $A,B,C$ and $D$ satisfy the following equations:

\begin{align}
\notag A^TD - C^TB &= Id \\ 
A^TC &= C^TA \\
\notag D^TB &= B^TD.
\label{eqsympl}
\end{align}

\noindent The image of $\mathcal{A}$ by $\mathcal{M}\rightarrow \operatorname{Sp}(2g,\mathbb{Z})$  consists of all matrices of the form $\begin{pmatrix}
A & B\\
0 & D
\end{pmatrix}$ where $A^TD = Id$ and $D^TB$ is symmetric (see \cite[Lemma 2.2]{birman75} or \cite{hirose}). The image of $\mathcal{I}^L$ by $\mathcal{M}\rightarrow \operatorname{Sp}(2g,\mathbb{Z})$  consists of all  matrices of type $\begin{pmatrix}
Id & S\\
0 & Id
\end{pmatrix}$ where $S$ is symmetric. The matrix $S$ associated in this way to an element $\varphi$ is the description of $\sigma(\varphi) \in \operatorname{Hom}(H',A)$ in the basis $(b_i')_{1  \leq i \leq g}$ and $(a_i)_{1  \leq i \leq g}$. In particular, $\sigma$ is surjective. The matrix $S = (S_{i,j})_{1 \leq i,j \leq g}$ actually corresponds to the symmetric tensor $\sum_{i,j=1}^g S_{i,j}(a_i \otimes a_j) \in (A \otimes A)^{\mathfrak{S}_2}$ (via the isomorphism $(A \otimes A)\simeq \operatorname{Hom}(H',A)$ given by $\omega'$).

\begin{rem}The map $\sigma$ can even be restricted to $\mathcal{I}^L \cap \mathcal{A}$, and will still be onto $(A \otimes A)^{\mathfrak{S}_2}$ (as a consequence of \cite[Theorem 7.1]{hensel}). This has a role to play in the proof of Theorem \ref{theoremtrA}.
\label{remI}
\end{rem}
The following corollary is a consequence of Theorem \ref{theoremtrA}.

\begin{coroll}
For any $g \geq 2$, $\tau_2(\mathcal{A} \cap J_2)$ is strictly included in $  \operatorname{Im}(\tau_2) \cap  \operatorname{Ker}(D_2(H) \rightarrow D_2(H'))$.
\end{coroll}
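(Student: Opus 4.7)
The plan is to combine Theorem \ref{theoremtrA} with Lemma \ref{cextrees} to exhibit an explicit element of $\operatorname{Im}(\tau_2) \cap \operatorname{Ker}(D_2(H) \to D_2(H'))$ lying outside of $\tau_2(\mathcal{A}\cap J_2)$. Theorem \ref{theoremtrA} already tells us that $\operatorname{Tr}^A$ vanishes on $\tau_2(\mathcal{A}\cap J_2)$, so it is enough to find one element of the larger set on which $\operatorname{Tr}^A$ does \emph{not} vanish.

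First I would fix $g\geq 2$, choose two distinct indices $i\neq j$ (which is possible precisely because $g\geq 2$), and consider the element
\[ T_1^{ij} = \ltree{$a_i$}{$b_j$}{$b_j$}{$b_i$} \]
from Lemma \ref{cextrees}. That same lemma already asserts $T_1^{ij}\in \operatorname{Im}(\tau_2)$ (via the kernel characterization of $\operatorname{Tr}^{as}$ given in Theorem \ref{theoremtraces}) and computes $\operatorname{Tr}^A(T_1^{ij}) = b_j'b_j'$, which is a non-zero element of $S^2(H')$.

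Next I would verify that $T_1^{ij}$ lies in $\operatorname{Ker}(D_2(H)\to D_2(H'))$. This is immediate from Lemma \ref{lemmainclusions}(ii): the tree $T_1^{ij}$ has the leaf $a_i \in A$, hence it belongs to $\mathcal{F}_0$, and $\mathcal{F}_0\subset \operatorname{Ker}(D_2(H)\to D_2(H'))$. Thus $T_1^{ij}\in \operatorname{Im}(\tau_2)\cap \operatorname{Ker}(D_2(H)\to D_2(H'))$.

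Finally, since $\operatorname{Tr}^A(T_1^{ij}) = b_j'b_j' \neq 0$, Theorem \ref{theoremtrA} forces $T_1^{ij} \notin \tau_2(\mathcal{A}\cap J_2)$, which gives the strict inclusion. No real obstacle is expected: every piece has been assembled, and the corollary is essentially an observation pulling together the non-triviality of $\operatorname{Tr}^A$ on the Levine candidate set (witnessed by Lemma \ref{cextrees}) with its vanishing on $\tau_2(\mathcal{A}\cap J_2)$ (Theorem \ref{theoremtrA}).
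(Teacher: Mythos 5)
Your proposal is correct and is essentially the paper's own proof: the paper's argument for this corollary is precisely the one-line observation that Lemma \ref{cextrees} exhibits elements of $\operatorname{Im}(\tau_2) \cap \operatorname{Ker}(D_2(H) \rightarrow D_2(H'))$ on which $\operatorname{Tr}^A$ does not vanish, combined with Theorem \ref{theoremtrA}. You merely spell out the intermediate step (membership in $\mathcal{F}_0$ via Lemma \ref{lemmainclusions}) that the paper leaves implicit.
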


\begin{proof}
We have exhibited in Lemma \ref{cextrees} elements of $  \operatorname{Im}(\tau_2) \cap  \operatorname{Ker}(D_2(H) \rightarrow D_2(H'))$ on which $ \operatorname{Tr}^A$ does not vanish.
\end{proof}

The rest of this section is dedicated to the proof of Theorem \ref{theoremtrA}, and in particular to the construction of $\mu$.

\subsubsection{Morita's decomposition of the Casson invariant}
Let $\lambda$ denote the Casson invariant. We consider a Heegaard embedding $j : \Sigma_{g,1} \rightarrow S^3$ of our abstract surface $\Sigma_{g,1}$ in $S^3$. This means that there exists a surface $\overline{\Sigma_g} \subset S^3$ such that $\overline{\Sigma_{g,1}} := j(\Sigma_{g,1})$ is obtained from $\overline{\Sigma_g}$ by removing a small open disk, and such that $\overline{\Sigma_g}$ splits $S^3$ in two handlebodies $\overline{V_g}$ and $\overline{W_g}$, which are called the ``inner" and the ``outer" handlebody, respectively. The orientation that $j$ induces on $\overline{\Sigma_{g,1}}$ is supposed to coincide with the one induced by $\overline{V_g}$. Later, we will also suppose that $j$ extends to $V_{g}$, and that $j(V_{g})$ is the ``inner" handlebody $\overline{V_g}$ in the splitting of $S^3$. Then, the handlebody group $\mathcal{A} = \mathcal{A}_{g,1}$ is identified through $j$ to the mapping class group of $\overline{V_g}$ relative to the disk $\overline{\Sigma_g} \smallsetminus \overline{\Sigma_{g,1}}$.

For every $\varphi \in \mathcal{I}$, one can define the 3-manifold obtained by cutting $S^3$ along the image of $j$ and gluing back the two handlebodies using the mapping cylinder of $\varphi$. In \cite{mor}, Morita defines $\lambda_j (\varphi)$ as  the Casson invariant of the resulting homology 3-sphere $S^3(j,\varphi)$, yielding a map: \begin{align*}
\lambda_j : \mathcal{I} &\longrightarrow \mathbb{Z} \\
\varphi & \longmapsto \lambda(S^3(j,\varphi)).
\end{align*}

The above map is \emph{not} a homomorphism, nevertheless Morita showed that its restriction to $\mathcal{K} =J_2$ is a homomorphism. He also showed that it can be expressed as the sum of two homomorphisms. We review their definitions, and refer the reader to \cite{mor} or \cite{masY3} for more details. The first one, $d$, is called the ``core of the Casson invariant'' and is independent of $j$. The second one is not, but is completely determined by the second Johnson homomorphism. Our notation conventions differ slightly from the original ones given in \cite{mor}, the content being exactly the same.

We do not need to give a precise definition for the map $d :\mathcal{K} \rightarrow \mathbb{Z}$, we only need to recall the following facts. Johnson showed $\cite{joh2}$ that $\mathcal{K}$ is generated by Dehn twists along bounding simple closed curves and Morita proved in \cite{mor} that \[ d(T_\gamma) = 4h(h-1)\] whenever $\gamma$ is a simple closed curve bounding a subsurface of genus $h$.

As for the second map, we need to fully review its definition. Let $\mathcal{C}$ be the unital, commutative, and associative algebra with generators $l(u,v)$ for all $u$ and $v$ in $H$ and subject to the relations: 
\begin{align*}
l\left(n \cdot u+n^{\prime} \cdot u^{\prime}, v\right) & =n \cdot l(u, v)+n^{\prime} \cdot l\left(u^{\prime}, v\right) \\
l(v, u) &=l(u, v)+\omega(u, v),
\end{align*}

\noindent for all $u,u',v$ $\in$ $H$ and for all $n,n' \in \mathbb{Z}$. We denote by $\operatorname{lk}$ the linking number in $S^3$. Let $\varepsilon_j : \mathcal{C} \rightarrow \mathbb{Z}$ be the unique algebra homomorphism such that: \[\varepsilon_{j}(l(u, v)):=\operatorname{lk}\left(j_*(u), j^+_*(v)) \right) \]

\noindent where $j^+$ is an embedding parallel to $j$, meaning that the image of $j^+$ is obtained by pushing the image of $j$ towards the outer handlebody. We fix a set of meridians and parallels $(\overline{\alpha},\overline{\beta})$ for the surface $\overline{\Sigma_{g,1}}$ (see Figure \ref{handlebody}). This defines a system $(\alpha, \beta)$ of meridians and parallels for $\Sigma$ given by $\alpha := j^{-1}(\overline{\alpha})$ and $\beta := j^{-1} (\overline{\beta})$. For any $1  \leq i \leq g$, the homology classes of $\alpha_i$ and $\beta_i$ are denoted respectively by $a_i$ and $b_i$.

\begin{figure}[h]
	\centering
	\includegraphics[scale= 0.23]{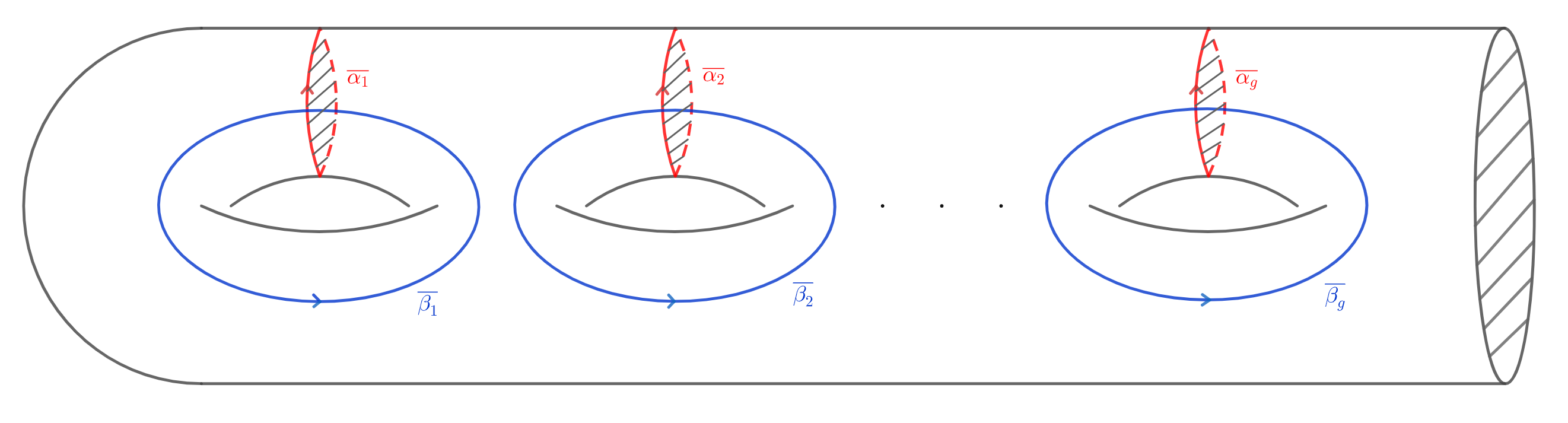}
	\caption{A system of meridians and parallels on $\overline{\Sigma_{g,1}} \subset \overline{V_g} \subset S^3$.}
	\label{handlebody}
\end{figure}

\begin{rem}
Considering that $\operatorname{lk}(j_*(a_i), j_*^+(b_j)) = 0$ and $\operatorname{lk}(j_*(b_i), j^+_*(a_j)) = \delta_{ij}$, the matrix associated to the bilinear mapping $\operatorname{lk}(j_*(-),j^+_*(-)) : H \times H \rightarrow \mathbb{Z}$ is $\begin{pmatrix}
0 & 0 \\ Id & 0
\end{pmatrix}$.
\end{rem}

Morita also defines a map $\theta:\left(\Lambda^{2} H \otimes \Lambda^{2} H\right)^{\mathfrak{S}_{2}} \rightarrow \mathcal{C}$ determined by: \begin{align*}
\theta((u \wedge v) \otimes(u \wedge v)) & :=l(u, u) l(v, v)-l(u, v) l(v, u) \\
\theta((a \wedge b) \leftrightarrow(c \wedge d)) &:=l(a, c) l(b, d)-l(a, d) l(b, c)-l(d, a) l(c, b)+l(c, a) l(d, b).
\end{align*}

\noindent He then defines a map $\bar{d}:\left(\Lambda^{2} H \otimes \Lambda^{2} H\right)^{\mathfrak{S}_{2}} \rightarrow \mathbb{Z}$ by:
\begin{align*}
\bar{d}((u \wedge v) \otimes(u \wedge v))&:=0 \\
\bar{d}((a \wedge b) \leftrightarrow(c \wedge d))&:=\omega(a, b) \omega(c, d)-\omega(a, c) \omega(b, d)+\omega(a, d) \omega(b, c),
\end{align*} 

\noindent so that $\overline{q_{j}}:=\varepsilon_{j} \circ \theta+\frac{1}{3} \bar{d} \text { vanishes on } \Lambda^{4} H \subset\left(\Lambda^{2} H \otimes \Lambda^{2} H\right)^{\mathfrak{S}_2}$. Hence, it is defined on $D_2(H)$ (see diagram \eqref{diagmas}). Finally, $q_j := -\overline{q_{j}} \circ \tau_2 : \mathcal{K} \rightarrow \mathbb{Q}$ is such that \begin{equation} -\lambda_{j}=\frac{1}{24} d+q_{j}: \mathcal{K} \rightarrow \mathbb{Z}.\label{eqmorita}\end{equation}

Here comes the key point of the definition of the map $\mu$:

\begin{lemma}
For any Heegaard embedding $j$, there is a well defined map $\mu_j : D_2(H) \times \mathcal{M} \rightarrow \mathbb{Z}$ given by \[ \mu_j([T],\varphi) := ( \varepsilon_j - \varepsilon_{j \circ \varphi} )\circ \theta (T) \] for $\varphi \in \mathcal{M}$ and $T \in (\Lambda^2 H \otimes \Lambda^2 H)^{\mathfrak{S}_2}$ (here $[T]$ denotes the class of $T$ in $D_2(H)$). This map is linear in its left argument  and it satisfies: \begin{equation} (\lambda_j - \lambda_{j \circ \varphi})(h) = \mu_j(\tau_2(h), \varphi) \label{eqmu}\end{equation} for all $\varphi \in \mathcal{M}$ and $h \in \mathcal{K}$.
\label{lemma410}
\end{lemma}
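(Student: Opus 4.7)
The plan is to observe that the putative map $\mu_j(-,\varphi)$ is, up to the correction term $\frac{1}{3}\bar d$, precisely the difference $\overline{q_j} - \overline{q_{j\circ\varphi}}$ of the two maps already constructed by Morita. Both well-definedness on $D_2(H)$ and the identity \eqref{eqmu} then follow with almost no additional computation.

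For well-definedness, the key point is that $\bar d$ depends only on the symplectic form $\omega$, not on the Heegaard embedding. Hence, as maps $(\Lambda^{2} H \otimes \Lambda^{2} H)^{\mathfrak{S}_{2}} \to \mathbb{Q}$,
\[
\overline{q_j} - \overline{q_{j\circ\varphi}} = \bigl(\varepsilon_j\circ\theta + \tfrac{1}{3}\bar d\bigr) - \bigl(\varepsilon_{j\circ\varphi}\circ\theta + \tfrac{1}{3}\bar d\bigr) = (\varepsilon_j - \varepsilon_{j\circ\varphi})\circ\theta.
\]
Since each of $\overline{q_j}$ and $\overline{q_{j\circ\varphi}}$ vanishes on $\Lambda^{4} H$ by Morita's construction recalled above, so does their difference, and consequently $(\varepsilon_j - \varepsilon_{j\circ\varphi})\circ\theta$ factors through $D_2(H) \simeq (\Lambda^{2} H \otimes \Lambda^{2} H)^{\mathfrak{S}_{2}}/\Lambda^{4} H$. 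This gives the map $\mu_j([T],\varphi)$ unambiguously. Linearity in $[T]$ is then automatic from the $\mathbb{Z}$-linearity of $\theta$ and of each ring homomorphism $\varepsilon_j$, and the integer-valuedness is immediate because linking numbers are integers.

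For the identity \eqref{eqmu}, I apply Morita's relation $-\lambda_j = \tfrac{1}{24}d + q_j$ to both $j$ and $j\circ\varphi$, viewed as maps $\mathcal{K}\to\mathbb{Z}$. Subtracting, and using that $d$ is independent of the embedding, I get
\[
\lambda_j - \lambda_{j\circ\varphi} \;=\; q_{j\circ\varphi} - q_j \;=\; \bigl(\overline{q_j} - \overline{q_{j\circ\varphi}}\bigr)\circ\tau_2,
\]
where the second equality uses $q_j = -\overline{q_j}\circ\tau_2$. By the computation of the previous paragraph, the right-hand side coincides with $\mu_j(\tau_2(-),\varphi)$, which yields the claimed formula on $\mathcal{K}$.

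There is no genuine obstacle in the proof itself: the whole content is packaged in Morita's decomposition, and the lemma simply isolates the $j$-dependent part of $\lambda_j$. The only subtlety worth flagging is that $\overline{q_j}$ itself takes values in $\mathbb{Q}$ (because of the $\frac{1}{3}\bar d$ summand), so $\mu_j$ is not literally the difference of two $\mathbb{Z}$-valued maps; it is the cancellation of the $\bar d$ summands in forming $\overline{q_j} - \overline{q_{j\circ\varphi}}$ that makes $\mu_j$ land in $\mathbb{Z}$ on the nose.
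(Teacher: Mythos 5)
Your proof is correct and follows essentially the same route as the paper: apply Morita's decomposition $-\lambda_j = \tfrac{1}{24}d + q_j$ to both $j$ and $j\circ\varphi$, cancel the embedding-independent terms $d$ and $\tfrac{1}{3}\bar d$, and use the vanishing of $\overline{q_j}$ and $\overline{q_{j\circ\varphi}}$ on $\Lambda^4 H$ to descend to $D_2(H)$. Your explicit remarks on well-definedness and integrality make precise what the paper leaves as "straightforward."
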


\begin{proof}
For any $\varphi \in \mathcal{M}$ we have, applying \eqref{eqmorita} both to $j$ and $j \circ \varphi$, that $- ( \lambda_j - \lambda_{j \circ \varphi}) = q_j - q_{j \circ \varphi}$. This last part depends only on the second Johnson homomorphism. More precisely, by looking at the definition of $\overline{q_j}$ and $\overline{q}_{j \circ \varphi}$, one can compute that for any element $T$ in $\left(\Lambda^{2} H \otimes \Lambda^{2} H\right)^{\mathfrak{S}_2}$, whose class in $D_2(H)$ is $[T]$: 
\begin{align*}
(\overline{q_j} - \overline{q}_{j \circ \varphi})([T]) &= (\varepsilon_j - \varepsilon_{j \circ \varphi})\circ \theta(T).
\end{align*}
The result is then straightforward.
\end{proof}

\begin{rem}
Lemma \ref{lemma410} shows as explained by Morita in \cite[Rem. 6.3]{mor}, that the homomorphism $\tau_2$ contains all the information about the differences $(\lambda_j - \lambda_{j \circ \varphi})$ with $\varphi \in \mathcal{M}$. Furthermore, when reducing equation \eqref{eqmu} mod 2, one can deduce that $\beta(J_3) \subset B_0$ as claimed by Johnson in \cite[p.178]{johsurvey}. Indeed for any $f\in J_3$, and for any $\varphi \in \mathcal{M}$, we have that $\beta(f)(\omega_j) - \beta(f)(\omega_{j\circ \varphi}) = \mu_j(\tau_2(f), \varphi) = 0$, where $\omega_j$ and $\omega_{j\circ \varphi}$ are the $\operatorname{Sp}$-quadratic forms defined by the Heegaard embeddings $j$ and $j \circ \varphi$ respectively (see \cite{johquad} for more details). Hence, $\beta(f)$ is fixed by the action of $\operatorname{Sp}(2g,\mathbb{Z})$. Furthermore, it is not hard to prove from \cite{johquad} and Lemma \ref{morformula} that there exists a map $d^2$, with kernel $B_{\leq 1}$ (giving the second formal differential of boolean quadratic functions), and a commutative diagram \[\begin{tikzcd} \mathcal{K} \arrow[r, "\beta"] \arrow[d, "\tau_2"] & B_{\leq 2} \arrow[d, "d^2"] \\ D_2(H) \arrow[r] & \Lambda^2(H \otimes \mathbb{Z}_2). \end{tikzcd}\]

\noindent This implies that $\beta(J_3) \subset B_{\leq 1}$ which in turn implies that $\beta(f)$ is a constant. Indeed, there is no non-trivial $\operatorname{Sp}(2g,\mathbb{Z})$-invariant boolean affine function on the set of $\operatorname{Sp}$-quadratic forms.
\label{rembeta}
\end{rem}
\subsubsection{The application $\mu$}

We now suppose that $j$ extends to the handlebody $V$, in such a way that $j(V) = \overline{V}$ is the inner handlebody of the Heegaard splitting of $S^3$. Once such a $j$ is fixed we simply define $\mu := \mu_j$, where $\mu_j$ is defined in Lemma \ref{lemma410}. We need first the following lemma:

\begin{lemma}
For any element $T \in \tau_2(\AJ)$, the map $\mu(T,-)$  vanishes on $\mathcal{A}$.
\label{lemma56}
\end{lemma}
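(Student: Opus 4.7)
The plan is to apply Lemma~\ref{lemma410} to rewrite $\mu(T,\varphi)$ as a difference of two Casson invariants, and then to argue that both of those invariants vanish because the associated Heegaard splittings yield $S^{3}$ itself.

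More precisely, since $T \in \tau_{2}(\AJ)$, I would pick a preimage $h \in \AJ$ with $\tau_{2}(h) = T$. By equation~\eqref{eqmu} (which is legitimate because $\mu(T,\varphi)$ depends only on the class $T \in D_{2}(H)$, not on the choice of preimage), this gives
\[
\mu(T,\varphi) \;=\; \lambda_{j}(h) - \lambda_{j \circ \varphi}(h)
\]
for every $\varphi \in \mathcal{M}$. The problem thus reduces to showing that each Casson invariant on the right vanishes when $\varphi \in \mathcal{A}$.

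For the first term, the key input is the standing hypothesis that $j$ extends to $V$ with $j(V) = \overline{V}$. Therefore any $f \in \mathcal{A}$ yields a self-homeomorphism $jfj^{-1}$ of $\overline{\Sigma}$ that extends to $\overline{V}$; the classical extension trick for Heegaard splittings (use this extension on the $\overline{V}$ side, the identity on $\overline{W}$) produces an orientation-preserving homeomorphism
\[
S^{3}(j,f) \;=\; \overline{V} \cup_{j f j^{-1}} \overline{W} \;\xrightarrow{\cong}\; \overline{V} \cup_{\mathrm{id}} \overline{W} \;=\; S^{3}.
\]
Applied to $f = h \in \AJ \subset \mathcal{A}$, this gives $\lambda_{j}(h) = \lambda(S^{3}) = 0$.

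For the second term, I would simply unwind the definition of the construction $S^{3}(-,-)$ to identify
\[
S^{3}(j \circ \varphi, h) \;=\; \overline{V} \cup_{(j\varphi)\, h\, (j\varphi)^{-1}} \overline{W} \;=\; \overline{V} \cup_{j (\varphi h \varphi^{-1}) j^{-1}} \overline{W} \;=\; S^{3}(j, \varphi h \varphi^{-1}).
\]
Because $\mathcal{A}$ is a subgroup of $\mathcal{M}$ and $J_{2}$ is normal in $\mathcal{M}$, the conjugate $\varphi h \varphi^{-1}$ again lies in $\AJ$, so the extension trick applies once more and gives $\lambda_{j \circ \varphi}(h) = 0$. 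Combined with the previous paragraph, this yields $\mu(T,\varphi) = 0$. No serious obstacle is anticipated; the argument is essentially a compatibility check between Lemma~\ref{lemma410} and the basic invariance of Heegaard splittings under handlebody-group twists, the only point requiring care being the verification that $\varphi h \varphi^{-1}$ remains in $\mathcal{A}$ (which uses only that $\mathcal{A}$ is a subgroup, and where normality of $J_{2}$ keeps it in $\AJ$).
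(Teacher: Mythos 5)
Your proposal is correct and follows essentially the same route as the paper: reduce via equation~\eqref{eqmu} to the vanishing of $\lambda_j(h)$ and $\lambda_{j\circ\varphi}(h)$, and observe that both gluing maps $jhj^{-1}$ and $(j\circ\varphi)h(j\circ\varphi)^{-1}$ extend to the inner handlebody $\overline{V}$ (precisely because $j$ extends to $V$ and both $h$ and $\varphi$ lie in $\mathcal{A}$), so each surgered manifold is $S^3$. Your rewriting of the second term as $S^3(j,\varphi h\varphi^{-1})$ is just a cosmetic repackaging of the paper's observation, and the appeal to normality of $J_2$ is harmless but not needed, since only membership of the conjugate in $\mathcal{A}$ matters for the extension argument.
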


\begin{proof}
If we choose $\varphi$ to be in $\mathcal{A}$ and $\psi$ to be in $\AJ$, we have that $(\lambda_j - \lambda_{j \circ \varphi})(\psi) = \lambda(S^3) - \lambda(S^3) = 0$. Indeed, both $j \circ \psi \circ j^{-1}$ and $(j \circ \varphi) \circ \psi \circ (j\circ \varphi)^{-1}$ extend to the handlebody $\overline{V}$. Hence $\mu(\tau_2(\psi), \varphi)= 0$, by equation \eqref{eqmu}.
\end{proof}

\noindent Remark that whenever $\varphi$ is not in $\mathcal{A}$, then $j\circ \varphi$ does not extend to an embedding on $V$, and the conclusions of Lemma \ref{lemma56} may not be true. Also the fact that $j$ extends to $V$ is needed.

We now compute the map $\mu$ explicitly. Let $\varphi \in \mathcal{M}$ be such that $\varphi_*(A) \subset A$. Notice first that \begin{equation}\varepsilon_{j\circ\varphi}(l(u,v))  = \operatorname{lk}((j \circ\varphi)_*(u),(j \circ\varphi)^+_*(v))= \varepsilon_j(l(\varphi_*(u),\varphi_*(v)) \label{eqstarone}\end{equation} for any $u,v \in H$. We use our chosen basis for $H$ (the one defined by $j$), and write the action of $\varphi$ as a matrix $\begin{pmatrix}A&B\\0&D
\end{pmatrix}$. Then the matrix of the bilinear map $\operatorname{lk}( (j \circ \varphi)_*(-), (j \circ \varphi)_*^+(-))$ is given by
: \begin{equation} \begin{pmatrix}A&B\\0&D
\end{pmatrix}^T\begin{pmatrix}0&0\\Id&0
\end{pmatrix}\begin{pmatrix}A&B\\0&D
\end{pmatrix} =  \begin{pmatrix}0&0\\Id&D^TB
\end{pmatrix} \label{eqstartwo}\end{equation} where $S := D^TB$ is a symmetric matrix. 
We now suppose that $\varphi \in \mathcal{I}^L$, and denote $\omega_\delta$ and $\omega_S$ the pairings $H \times H \rightarrow \mathbb{Z}$ corresponding to the matrices $\begin{pmatrix}0&0\\Id&0
\end{pmatrix} $ and $\begin{pmatrix}0&0\\0&S
\end{pmatrix} $ through our choice of basis for $H$, where $S$ is the matrix describing $\sigma(\varphi)\in (A \otimes A)^{\mathfrak{S}_2}$ in the basis $(a_1, \dots, a_g)$. Note that these definitions depend on the choice of Heegaard embedding $j$.

We then have the following: 

\begin{lemma}For any $a,b,c,d,u,v$ in $H$ and for any $\varphi \in \mathcal{I}^L$, we have
\begin{align*}
-\mu\Big(\ltree{a}{b}{c}{d}, \varphi\Big) \quad &= \omega_S(a,c)\omega_S(b,d) + \omega_S(c,a)\omega_S(d,b) \\&-\omega_S(a,d)\omega_S(b,c) -\omega_S(d,a)\omega_S(c,b) \\ &+
\omega_S(a,c)\omega_\delta(b,d) + \omega_S(c,a)\omega_\delta(d,b)\\ & -\omega_S(a,d)\omega_\delta(b,c) -\omega_S(d,a)\omega_\delta(c,b) \\&+
\omega_\delta(a,c)\omega_S(b,d) + \omega_\delta(c,a)\omega_S(d,b) \\& -\omega_\delta(a,d)\omega_S(b,c) -\omega_\delta(d,a)\omega_S(c,b) \\
-\mu\Big(\frac{1}{2} \ltree{u}{v}{u}{v}, \varphi\Big)\quad &= \omega_S(u,u)\omega_S(v,v) - \omega_S(u,v)\omega_S(v,u) \\ & + \omega_S(u,u)\omega_\delta(v,v) - \omega_S(u,v)\omega_\delta(v,u) \\ & + \omega_\delta(u,u)\omega_S(v,v) - \omega_\delta(u,v)\omega_S(v,u)
\end{align*}
\noindent where $S$ is the matrix describing $\sigma(\varphi)$ in the basis $(a_1, \dots, a_g)$.
\label{computationmu}
\end{lemma}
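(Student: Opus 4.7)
The plan is to start from the definition given in Lemma \ref{lemma410}, namely
\[ \mu(T,\varphi) = (\varepsilon_j - \varepsilon_{j\circ\varphi})\circ \theta(T), \]
and to compute the difference $\varepsilon_j - \varepsilon_{j\circ\varphi}$ on the generators $l(u,v)$ of $\mathcal{C}$, using \eqref{eqstarone} and \eqref{eqstartwo}. For $\varphi \in \mathcal{I}^L$, the symplectic matrix representing $\varphi_*$ in the basis $(a_i,b_i)_i$ is of the form $\begin{pmatrix} Id & S \\ 0 & Id\end{pmatrix}$ with $S$ symmetric equal to $D^TB$. Substituting into \eqref{eqstartwo} gives that the bilinear form $\operatorname{lk}((j\circ\varphi)_*(-),(j\circ\varphi)^+_*(-))$ has matrix $\begin{pmatrix}0&0\\Id&S\end{pmatrix}$, which is exactly $\omega_\delta + \omega_S$. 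Together with the identification $\varepsilon_j(l(u,v)) = \omega_\delta(u,v)$, this yields the key identity
\[ (\varepsilon_j - \varepsilon_{j\circ\varphi})(l(u,v)) = -\omega_S(u,v). \]

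Next, since $\varepsilon_j$ and $\varepsilon_{j\circ\varphi}$ are algebra homomorphisms, I expand the difference on products: for any generators of $\mathcal{C}$,
\[ (\varepsilon_j - \varepsilon_{j\circ\varphi})(l(x,y) l(z,w)) = -\omega_\delta(x,y)\omega_S(z,w) - \omega_S(x,y)\omega_\delta(z,w) - \omega_S(x,y)\omega_S(z,w), \]
obtained by writing $\varepsilon_{j\circ\varphi}(l(\cdot,\cdot)) = \omega_\delta + \omega_S$ and subtracting $\omega_\delta(x,y)\omega_\delta(z,w)$. This formula is the engine for both cases: the $\omega_\delta\omega_\delta$ contributions (which do not involve $\varphi$) cancel, leaving the three families of terms $\omega_S\omega_S$, $\omega_\delta\omega_S$ and $\omega_S\omega_\delta$ that appear in the statement.

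It then remains to apply this identity to the two $\theta$-expressions provided by Morita:
\begin{align*}
\theta\bigl((a\wedge b) \leftrightarrow (c\wedge d)\bigr) &= l(a,c)l(b,d)-l(a,d)l(b,c)-l(d,a)l(c,b)+l(c,a)l(d,b), \\
\theta\bigl((u\wedge v)\otimes(u\wedge v)\bigr) &= l(u,u)l(v,v) - l(u,v)l(v,u).
\end{align*}
For the tree $\ltree{a}{b}{c}{d}$, recall from diagram \eqref{diagmas} that its class in $D_2(H)$ corresponds to $(a\wedge b)\leftrightarrow (c\wedge d)$, so one obtains $-\mu(T,\varphi)$ as the sum of four contributions (one per monomial of $\theta$), each producing a $\omega_S\omega_S$, a $\omega_\delta\omega_S$ and a $\omega_S\omega_\delta$ term. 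Collecting these twelve terms gives exactly the stated formula. The half-symmetric tree $\frac12\ltree{u}{v}{u}{v}$ corresponds to $(u\wedge v)\otimes(u\wedge v)$ and is handled identically, producing six terms.

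The computation is essentially bookkeeping; the only place one has to be careful is the sign convention. The terms $\omega_\delta\omega_\delta$ must drop out (they do, because they do not depend on $\varphi$), and the symmetry $\omega_S(x,y)=\omega_S(y,x)$ will not be invoked during the calculation, so that the final formulas are written in the non-simplified form displayed in the statement, with both $\omega_S(x,y)$ and $\omega_S(y,x)$ appearing explicitly. The main obstacle, if any, is therefore just the care required to track indices and match the twelve (resp. six) resulting terms with those listed in the lemma.
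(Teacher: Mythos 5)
Your proposal is correct and follows essentially the same route as the paper: both proofs use that $\varepsilon_j$ and $\varepsilon_{j\circ\varphi}$ are algebra homomorphisms with $\varepsilon_j(l(u,v))=\omega_\delta(u,v)$ and $\varepsilon_{j\circ\varphi}(l(u,v))=(\omega_\delta+\omega_S)(u,v)$ (via \eqref{eqstarone} and \eqref{eqstartwo}), expand the difference on products of generators so that the $\omega_\delta\omega_\delta$ terms cancel, and then apply the result to the two $\theta$-expressions. The only cosmetic difference is that the paper records $(\varepsilon_{j\circ\varphi}-\varepsilon_j)$ on products directly, whereas you track $(\varepsilon_j-\varepsilon_{j\circ\varphi})$ and flip the sign at the end.
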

\begin{proof}
The result follows from the definition of $\mu := \mu_j$, from the definition of $\theta$ and from: \begin{align*}
(\varepsilon_{j \circ \varphi} - \varepsilon_j)(l(a,c)l(b,d))&= \varepsilon_{j \circ \varphi}(l(a,c)l(b,d)) -\varepsilon_j(l(a,c)l(b,d))\\
&= \varepsilon_{j \circ \varphi}(l(a,c)) ~ \varepsilon_{j \circ \varphi}l(b,d)) -\varepsilon_j(l(a,c))~\varepsilon_j(l(b,d)) \\
&= (\omega_S + \omega_\delta)(a,c)(\omega_S + \omega_\delta)(b,d) -  \omega_\delta(a,c)\omega_\delta(b,d) \\
&= \omega_S(a,c)\omega_S(b,d) + \omega_S(a,c)\omega_\delta(b,d) + \omega_\delta(a,c)\omega_S(b,d)
\end{align*}

\noindent where the third equality is obtained by \eqref{eqstarone} and \eqref{eqstartwo}.
\end{proof}

We can express this is in a very compact way. Once again we define a trace-like operator $ \operatorname{Tr}^{\omega_S}$:\[
\begin{tikzcd}
 \operatorname{Tr}^{\omega_S}: D_2(H) \arrow[r] & H \otimes \mathcal{L}_{3}(H) \arrow[r, "i"]  & T_{4}(H) \arrow[r, "(\omega_S)^{1,2}"]  &  T_{2}(H)
 \end{tikzcd}\]
 
\noindent where $(\omega_S)^{1,2}$ is the contraction of the first two tensors by $\omega_S$. We now need the following lemma. 

\begin{lemma} For any $a,b,c,d,u,v \in H$, and for any $\varphi \in \mathcal{I}^L$, we have
\begin{align*}
 \operatorname{Tr}^{\omega_S}\Big(\ltree{a}{b}{c}{d}\Big) \quad &= \omega_S(a,d)(b\otimes c + c \otimes b) + \omega_S(b,c)(a\otimes d + d \otimes a) \\ &- 
\omega_S(a,c)(b\otimes d + d \otimes b) - \omega_S(b,d)(a\otimes c + c \otimes a)\\ 
 \operatorname{Tr}^{\omega_S}\Big(\frac{1}{2} \ltree{u}{v}{u}{v}\Big) \quad &= \omega_S(u,v)(u\otimes v + v \otimes u ) -\omega_S(u,u)v \otimes v - \omega_S(v,v)u \otimes u
\end{align*}

\noindent where $S$ is the matrix describing $\sigma(\varphi)$ in the basis $(a_1, \dots, a_g)$.
\label{calcultromegas}
\end{lemma}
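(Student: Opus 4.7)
The plan is to unfold the definition of $\operatorname{Tr}^{\omega_S}$ and perform a direct computation, relying crucially on the fact that the bilinear form $\omega_S$ is \emph{symmetric} (the matrix $S$ describing $\sigma(\varphi)$ being symmetric by construction). Recall that $\operatorname{Tr}^{\omega_S}$ views an element of $D_2(H)$ as sitting in $H \otimes \mathcal{L}_3(H)$, then injects it into $T_4(H)$ via $i$ by writing every Lie bracket $[x,y]$ as $x\otimes y - y\otimes x$, and finally contracts the two leftmost tensor factors with $\omega_S$.

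For a generic tree $\ltree{a}{b}{c}{d}$, I would start from the expansion
\[
\eta_2\Big(\ltree{a}{b}{c}{d}\Big) = a \otimes [b,[c,d]] + b \otimes [[c,d],a] + c \otimes [d,[a,b]] + d \otimes [[a,b],c],
\]
write each double bracket as a signed sum of four monomials in $T_3(H)$, tensor on the left, and contract by $\omega_S$. This produces sixteen terms in $T_2(H)$. Because $\omega_S$ is symmetric, the contributions in which $\omega_S$ pairs two letters that already appear together inside an innermost bracket cancel in pairs: for instance, $\omega_S(a,b)(c\otimes d - d\otimes c)$ coming from $a\otimes[b,[c,d]]$ cancels exactly the contribution $\omega_S(b,a)(d\otimes c - c\otimes d)$ coming from $b\otimes[[c,d],a]$. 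The eight remaining terms group naturally according to the value of $\omega_S$ they carry, and each coefficient $\omega_S(x,y)$ turns out to weight a symmetrized tensor $z\otimes t + t\otimes z$, producing precisely the announced formula.

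For the half-symmetric tree, I would use that $u\odot v = u\otimes [v,[u,v]] + v\otimes [[u,v],u]$ is the integral lift of $\frac{1}{2}\,\ltree{u}{v}{u}{v}$ in $D_2(H) \subset H\otimes\mathcal{L}_3(H)$. Expanding, one gets $[v,[u,v]] = 2\,v\otimes u\otimes v - v\otimes v\otimes u - u\otimes v\otimes v$ and $[[u,v],u] = 2\,u\otimes v\otimes u - v\otimes u\otimes u - u\otimes u\otimes v$ in $T_3(H)$. Tensoring on the left and applying $(\omega_S)^{1,2}$, the symmetry of $\omega_S$ makes the coefficient of $u\otimes v$ collapse from $2\omega_S(u,v) - \omega_S(u,v)$ to $\omega_S(u,v)$, and symmetrically for $v\otimes u$, while $u\otimes u$ and $v\otimes v$ are picked up cleanly from their unique sources. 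This yields the second stated identity.

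The main ``obstacle'' is purely bookkeeping: the computation is routine, and the only conceptual input is the symmetry of $\omega_S$. Without it, the genuinely antisymmetric (Lie-bracket) contributions would not cancel, and the output would fail to reduce to a symmetric expression involving only the combinations $z\otimes t + t\otimes z$, $u\otimes u$, $v\otimes v$. I would organize the computation so that this symmetry is applied systematically, which also makes the compatibility with Lemma \ref{computationmu} transparent when $\operatorname{Tr}^{\omega_S}$ is later paired with $\omega_S + 2\omega_\delta$ in the next section.
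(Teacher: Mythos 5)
Your computation is correct and is exactly the direct unfolding of the definition of $\operatorname{Tr}^{\omega_S}$ that the paper leaves implicit (the lemma is stated without proof as a routine calculation): expanding the four terms of $\eta_2$ in $T_4(H)$, contracting the first two tensor factors, and using the symmetry of $S$ to cancel the $\omega_S(a,b)$- and $\omega_S(c,d)$-weighted antisymmetric pieces gives precisely the stated formulas, and I have checked that your intermediate expressions for $[v,[u,v]]$ and $[[u,v],u]$ and the resulting coefficients are right. The only blemish is in your closing aside, where the pairing in Corollary \ref{cor513} is $\tfrac{1}{2}\omega_S+\omega_\delta$ rather than $\omega_S+2\omega_\delta$ (a harmless overall factor of $2$, and in any case outside the statement being proved).
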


\begin{coroll}For any $\varphi \in \mathcal{I}^L$, we have
\[ (\frac{1}{2}\omega_S + \omega_\delta) \circ  \operatorname{Tr}^{\omega_S} = \mu(-,\varphi)\]
\noindent where $S$ is the matrix describing $\sigma(\varphi)$ in the basis $(a_1, \dots, a_g)$.
\label{cor513}
\end{coroll}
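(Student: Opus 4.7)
By Proposition \ref{prop2.1}, the group $D_2(H)$ is generated by trees of the form $\ltree{$a$}{$b$}{$c$}{$d$}$ and by half-symmetric generators $\tfrac{1}{2}\ltree{$u$}{$v$}{$u$}{$v$}$. Since both $\mu(-,\varphi)$ and $(\tfrac{1}{2}\omega_S + \omega_\delta) \circ \operatorname{Tr}^{\omega_S}$ are $\mathbb{Z}$-linear on $D_2(H)$ (the former by Lemma \ref{lemma410}, the latter by construction), it suffices to check the stated identity on these two families of generators.

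On each such generator, Lemma \ref{calcultromegas} provides an explicit formula for $\operatorname{Tr}^{\omega_S}$, while Lemma \ref{computationmu} provides one for $\mu(-,\varphi)$; the comparison then reduces to a direct algebraic calculation. The key observation is that $S$ is symmetric, so $\omega_S$ is a symmetric bilinear form on $H$ whereas $\omega_\delta$ is not. Lemma \ref{calcultromegas} presents $\operatorname{Tr}^{\omega_S}(T)$ as a sum of symmetrized tensors $x\otimes y + y\otimes x$ (plus diagonal contributions $u\otimes u$ and $v\otimes v$ in the half-symmetric case), each weighted by a single $\omega_S$-coefficient. Contracting by $\tfrac{1}{2}\omega_S$ then produces exactly one $\omega_S(x,y)$ per symmetrized pair (using the symmetry of $\omega_S$), which, after multiplication by the outer $\omega_S$-coefficient, matches the pure $\omega_S\omega_S$ terms in Lemma \ref{computationmu}; contracting by $\omega_\delta$ produces $\omega_\delta(x,y) + \omega_\delta(y,x)$, which matches the mixed $\omega_S\omega_\delta$ and $\omega_\delta\omega_S$ terms. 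The absence of any pure $\omega_\delta\omega_\delta$ contribution in the formula for $\mu$ is automatic, since $\operatorname{Tr}^{\omega_S}$ carries exactly one factor of $\omega_S$ by construction (matching the cancellation of pure $\omega_\delta$-terms in the proof of Lemma \ref{computationmu}).

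The expected main obstacle is thus not conceptual but purely combinatorial: one has to reorganize the twelve terms appearing in Lemma \ref{computationmu} (tree case) and the six terms (half-symmetric case) into the four-term or three-term pattern of Lemma \ref{calcultromegas}, using the symmetry of $\omega_S$ to identify pairs such as $\omega_S(c,a)\omega_S(d,b) = \omega_S(a,c)\omega_S(b,d)$, and keeping careful track of signs across the various symmetrizations.
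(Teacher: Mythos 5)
Your proposal is correct and follows essentially the same route as the paper: reduce to the generators of $D_2(H)$ (trees and halves of symmetric trees) by linearity, then compare the formulas of Lemma \ref{calcultromegas} and Lemma \ref{computationmu} term by term, using the symmetry of $S$ to collapse the symmetrized tensors into single $\omega_S$-coefficients. The paper writes out only the tree case and leaves the half-symmetric case as "similar", so your explicit accounting of both families, and of why no pure $\omega_\delta\omega_\delta$ terms appear, is a faithful elaboration rather than a different argument.
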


\begin{proof}[Proof of Corollary \ref{cor513}]
This is a direct computation, together with the fact that the matrix~$S$ is symmetric. Set $y := (\frac{1}{2}\omega_S + \omega_\delta) \circ Tr^{\omega_S}\Big(\ltree{$a$}{$b$}{$c$}{$d$}\Big)$, then:\begin{align*}
y &= (\frac{1}{2}\omega_S + \omega_\delta)(\omega_S(a,d)(b\otimes c + c \otimes b) + \omega_S(b,c)(a\otimes d + d \otimes a) \\ & \quad - 
\omega_S(a,c)(b\otimes d + d \otimes b) - \omega_S(b,d)(a\otimes c + c \otimes a))\\
 &= \omega_S(a,d)\omega_S(b,c) + \omega_S(b,c)\omega_S(a,d) \\ &\quad-\omega_S(a,c)\omega_S(b,d) - \omega_S(b,d)\omega_S(a,c) \\
&\quad+ \omega_\delta(\omega_S(a,d)(b\otimes c + c \otimes b) + \omega_S(b,c)(a\otimes d + d \otimes a) \\ &\quad- 
\omega_S(a,c)(b\otimes d + d \otimes b) - \omega_S(b,d)(a\otimes c + c \otimes a))\\
&= \mu\Big(\ltree{$a$}{$b$}{$c$}{$d$}, \varphi\Big)
\end{align*}
where the last equality comes from Lemma \ref{computationmu}. The equality for halfs of symmetric trees can be checked in a similar way.
\end{proof}

\begin{rem}
It is easy to see that the map $\mu$ is not linear in the second variable. However, since $\omega_S \circ  \operatorname{Tr}^{\omega_S}$ clearly vanishes on $\mathcal{F}_0$, we have that the restriction $\mu_{\mid \mathcal{F}_0 \times \mathcal{I}^L}$ is bilinear, as stated in Theorem \ref{theoremtrA}.
\label{remlinear}
\end{rem}
\noindent We now prove Theorem \ref{theoremtrA}.
\begin{proof}[Proof of Theorem \ref{theoremtrA}]
Recall that $ \operatorname{Tr}^A$ vanishes on $\mathcal{F}_1$. So does $\mu$. Indeed, by Corollary \ref{cor513} and Remark \ref{remlinear}, we have $\omega_\delta \circ  \operatorname{Tr}^{\omega_S} =  \mu(-,\varphi)$ for any $\varphi \in \mathcal{I}^L$, with $S = \sigma(\varphi)$. Also, by Lemma \ref{calcultromegas}, for any $x_1,x_2 \in A$ and for any $c,d \in H$: 
\begin{align*}
\omega_\delta\Big( \operatorname{Tr}^{\omega_S}\Big(\ltree{$x_1$}{$x_2$}{$c$}{$d$}\Big)\Big) &= \omega_\delta(0)=0,\\ 
\omega_\delta\Big( \operatorname{Tr}^{\omega_S}\Big(\ltree{$x_1$}{$c$}{$d$}{$x_2$}\Big)\Big) & = \omega_\delta(\omega_S(c,d)(x_1 \otimes x_2 + x_2 \otimes x_1))= 0.
\end{align*}
For any symmetric tree $T'$ in $\mathcal{F}_1$, $\operatorname{Tr}^{\omega_S}(\frac{1}{2}T') = \frac{1}{2}\operatorname{Tr}^{\omega_S}(T')=0$. 

Hence, it is sufficient to compute the maps on trees with only one leaf colored by an element of $A$. Any half of a symmetric tree in $\mathcal{F}_0$ is actually in $\mathcal{F}_1$, and for any $a\in A$ and $c_1, c_2, c_3 \in H$, we have, once again applying Lemma \ref{calcultromegas}:

\begin{align*}
\omega_\delta\Big( \operatorname{Tr}^{\omega_S}\Big(\ltree{$a$}{$c_1$}{$c_2$}{$c_3$}\Big)\Big) \quad & = \omega_S(c_1,c_2)\omega_\delta(a \otimes c_3 + c_3 \otimes a) \\ & \quad- \omega_S(c_1,c_3)\omega_\delta(a \otimes c_2 + c_2 \otimes a) \\ 
&= \omega_S(c_1,c_2)\omega_\delta(c_3,a) - \omega_S(c_1,c_3)\omega_\delta(c_2 , a) \\ &= \omega_S(c_1,c_2)\omega'(a,{c_3}') - \omega_S(c_1,c_3)\omega'(a , {c_2}') \\&= \omega_S(c_1,c_2)\omega(a,c_3) - \omega_S(c_1,c_3)\omega(a , c_2),
\end{align*}
\noindent and, if $s :=\sum_{i,j=1}^g S_{i,j}(a_i \otimes a_j)$ is the element of $(A \otimes A)^{\mathfrak{S}_2}$ corresponding to $S$ under the isomorphism $(A \otimes A)\simeq \operatorname{Hom}(H',A)$ given by $\omega'$:
\begin{align*}
 \widetilde{\operatorname{Tr}}^A\Big(\ltree{$a$}{$c_1$}{$c_2$}{$c_3$},S\Big) \quad & = \frac{1}{2}r(s)(\omega(a,c_3)(c_2 c_1) - \omega(a,c_2)(c_3c_1)) \\ &= \omega_S(c_1,c_2)\omega(a,c_3) - \omega_S(c_1,c_3)\omega(a , c_2)
\end{align*}

\noindent as one can see by using equations \eqref{eqexemple} and \eqref{req}. Indeed, $s$ yields after dualization an element $\sum_{i,j=1}^g S_{i,j} (b_i'^* \otimes b_j'^*) \in (H'^*\otimes H'^*)$. This corresponds exactly to the element of $(H'\otimes H')^*$ induced by $\omega_S$. In other words, $r(s)(c_2c_1) = \omega_S(c_2 \otimes c_1 + c_2 \otimes c_1)=2\omega_S(c_1,c_2)$.

From these equalities, and Corollary \ref{cor513}, we can conclude that for all $T \in \mathcal{F}_0 $, and $\varphi \in \mathcal{I}^L$, $ \widetilde{\operatorname{Tr}}^A(T, \sigma(\varphi)) = \mu(T, \varphi )$. To conclude, if a tree $T$ is in $\tau_2(\AJ)$, for any $\varphi \in \mathcal{I}^L \cap \mathcal{A}$, $\mu(T,\varphi)=0$ by Lemma \ref{lemma56}. By Remark \ref{remI}, it is the same as saying that  $\mu(T,\varphi)=0$ for any $\varphi \in \mathcal{I}^L$. Remark \ref{newrem} then implies that $ \operatorname{Tr}^A(T)=0$. The map $ \operatorname{Tr}^A$ then vanishes on $\tau_2(\mathcal{A} \cap J_2)$.
\end{proof}
\begin{rem}
Note that, while the map $\mu = \mu_j : D_2(H) \times \mathcal{M} \rightarrow \mathbb{Z}$ depends on the choice of the Heegaard embedding $j: \Sigma \rightarrow S^3$ (extending to $V$), its restriction to $\mathcal{F}_0 \times \mathcal{I}^L$ only depends on the Lagrangian $A\subset H$, as a consequence of Theorem \ref{theoremtrA}.  
\end{rem}
\section{Computing $\tau_2(\mathcal{A} \cap J_2)$}
\label{sec5}
In this section we compute explicitly the image of $\AJ$ under $\tau_2$. We are going to show that it is detected by $ \operatorname{Tr}^A : \mathcal{F}_0 \rightarrow S^2(H')$. The hypothesis on the genus in the next result could probably be improved, but it would add a lot of special cases to the computations below.

\begin{theorem}
For $g \geq 4$, we have $\tau_2(\mathcal{A} \cap J_2) =   \operatorname{Ker}( \operatorname{Tr}^A) \cap \operatorname{Ker}( \operatorname{Tr}^{as}) =  \operatorname{Ker}( \operatorname{Tr}^A) \cap \operatorname{Im}(\tau_2)$.
\label{lastthm}
\end{theorem}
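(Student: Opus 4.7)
The equality $\operatorname{Ker}(\operatorname{Tr}^A) \cap \operatorname{Ker}(\operatorname{Tr}^{as}) = \operatorname{Ker}(\operatorname{Tr}^A) \cap \operatorname{Im}(\tau_2)$ is immediate from Theorem~\ref{theoremtraces}, which identifies $\operatorname{Ker}(\operatorname{Tr}^{as})$ with $\operatorname{Im}(\tau_2)$. The forward inclusion $\tau_2(\mathcal{A} \cap J_2) \subseteq \operatorname{Ker}(\operatorname{Tr}^A) \cap \operatorname{Im}(\tau_2)$ combines the observation at the beginning of Section~\ref{sec4} (that $\tau_2(\mathcal{A} \cap J_2)$ lies in $\operatorname{Ker}(D_2(H) \rightarrow D_2(H')) = \mathcal{F}_0$, the domain of $\operatorname{Tr}^A$) with Theorem~\ref{theoremtrA}. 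The content of the theorem is therefore the reverse inclusion $\operatorname{Ker}(\operatorname{Tr}^A) \cap \operatorname{Im}(\tau_2) \subseteq \tau_2(\mathcal{A} \cap J_2)$.

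I would attack this by exploiting the filtration $(\mathcal{F}_l)_{-1 \leq l \leq 3}$ from Section~\ref{sec4}, working from the top downward. Since $\operatorname{Tr}^A$ vanishes on $\mathcal{F}_1$, the reverse inclusion splits naturally into two claims: \emph{(a)} $\mathcal{F}_1 \cap \operatorname{Im}(\tau_2) \subseteq \tau_2(\mathcal{A} \cap J_2)$, and \emph{(b)} the map $\operatorname{Tr}^A$ induces an injection from $(\operatorname{Im}(\tau_2) \cap \mathcal{F}_0)/(\mathcal{F}_1 + \tau_2(\mathcal{A} \cap J_2))$ into $S^2(H')$. For \emph{(a)}, I would descend along the associated graded $\mathcal{F}_l / \mathcal{F}_{l+1}$, each identified (cf.\ Remark~\ref{remvera}) with a space of tree-like diagrams colored by $A \oplus H'$ with a prescribed number of $A$-colored leaves, and realize each graded piece as $\tau_2$-image of one of the three families of elements of $\mathcal{A} \cap J_2$ from Section~\ref{section51}: BSCC-twists bounding disks in $V$ (with $\tau_2$ computed by Lemma~\ref{morformula}), annulus twists $T_\delta T_{\delta'}^{-1}$, and commutators of HBP-maps (Omori's Theorem~\ref{omori}). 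For the last, $\tau_2([f,g])$ equals the Lie bracket $[\tau_1(f), \tau_1(g)]$ in $D(H)$; by Proposition~\ref{morprop} one may choose $\tau_1(f), \tau_1(g) \in \operatorname{Ker}(\Lambda^3 H \to \Lambda^3 H')$ with any prescribed mix of $A$- and non-$A$-leaves. In particular, the pure-$A$ stratum $\mathcal{F}_3$ is reached by brackets with, say, $\tau_1(f) = a_i \wedge a_j \wedge b_k$ and $\tau_1(g) = a_k \wedge a_m \wedge a_n$: the only surviving contraction $b_k \leftrightarrow a_k$ yields the tree $\ltree{$a_i$}{$a_j$}{$a_m$}{$a_n$}$, and varying $(i,j,m,n,k)$ and combining with IHX allows one to span the whole stratum.

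For \emph{(b)}, the explicit generators $T_1^{ij}$ and $T_2^{kk',ij}$ of Lemma~\ref{cextrees}, together with their translates under the action of $\mathcal{A}$ (which surjects onto the stabilizer of $A$ in $\operatorname{Sp}(H)$ by \cite[Thm.~7.1]{hensel}), suffice to cover the image of $\operatorname{Tr}^A$ restricted to $\operatorname{Im}(\tau_2) \cap \mathcal{F}_0$; any class in the kernel of the induced map then differs, modulo those generators and $\mathcal{F}_1$, by an element of $\tau_2(\mathcal{A} \cap J_2)$, checkable directly from the presentation of Proposition~\ref{prop2.1} using the contraction recipe of Example~\ref{ex43}.

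The main obstacle is part \emph{(a)} in its top stratum $\mathcal{F}_3$: since $A$ is Lagrangian, no symplectic basis of a subsurface of $\Sigma$ can lie in $A$, so Morita's formula never yields an $\mathcal{F}_3$-element by itself, and pure-$A$ trees must be assembled from iterated commutators in the Torelli handlebody group. Making this assembly explicit while respecting the IHX and AS relations, and simultaneously checking integrality against the half-symmetric generators of $D_2(H)$, is the combinatorial heart of the argument. The hypothesis $g \geq 4$ enters precisely here, ensuring there is enough room in $\Sigma$ to realize every required four-index tree pattern by disjointly supported HBP-maps.
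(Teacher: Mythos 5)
Your skeleton coincides with the paper's: the two easy inclusions follow from Theorems \ref{theoremtraces} and \ref{theoremtrA}, and the substance is $\operatorname{Ker}(\operatorname{Tr}^A)\cap\operatorname{Ker}(\operatorname{Tr}^{as})\subseteq\tau_2(\AJ)$, attacked by decomposing an element according to the number of $A$-colored leaves. But both halves of your plan stop short of a proof. For your step \emph{(b)}, the elements $T_1^{ij}$ and $T_2^{kk',ij}$ of Lemma \ref{cextrees} are precisely \emph{not} in $\tau_2(\AJ)$ (they are the counterexamples to Levine's question), so knowing that they cover the image of $\operatorname{Tr}^A$ says nothing about which elements of the \emph{kernel} are realizable; the assertion that the kernel classes are ``checkable directly from the presentation'' is a restatement of the theorem, not an argument. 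What is actually required is an explicit generating set for $\operatorname{Ker}\bigl(\operatorname{Tr}^A\bigr)$ restricted to the span of one-$A$-leaf trees (the paper extracts one via Lemma \ref{linalglemma} applied to the short exact sequence with quotient $S^2(H')$, Proposition \ref{defN}), followed by a realization of each generator as a bracket of elements of $\tau_1(\mathcal{A}\cap J_1)$ --- a page of computations in which $g\geq 4$ is needed to pick auxiliary indices creating no new contractions.

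For your step \emph{(a)}, ``realize each graded piece $\mathcal{F}_l/\mathcal{F}_{l+1}$'' cannot work as stated, because these pieces are not contained in $\operatorname{Im}(\tau_2)$: for instance the tree with leaves $a_i,b_j,b_i,a_j$ ($i\neq j$) represents a class in $\mathcal{F}_1/\mathcal{F}_2$ with $\operatorname{Tr}^{as}=a_i\wedge b_i+a_j\wedge b_j\neq 0$, hence is not realizable. The descent only gets off the ground after the observation (which your plan omits) that $\operatorname{Tr}^{as}$ of a tree with $k$ leaves in $A$ lands in the summand $B\wedge B$, $A\wedge B$, $A\wedge A$ or $0$ of $\Lambda^2(H/2H)$ according to $k=1,2,3,4$, so that $\operatorname{Tr}^{as}(T)=0$ forces the vanishing on each type-homogeneous piece separately; one must then again extract and realize generators of each of these kernels. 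Finally, your localization of the difficulty is inverted: because $A$ is Lagrangian, pure-$A$ \emph{trees} admit no contractions and are immediately single brackets of elements of $\operatorname{Ker}(\Lambda^3H\to\Lambda^3H')=\tau_1(\mathcal{A}\cap J_1)$ (your $\mathcal{F}_3$ computation is correct but is the easy case, and needs no IHX gymnastics). The delicate points are the one-contraction trees and the half-symmetric generators $a\odot b$ with $a,b$ involving meridian classes, which are not brackets at all and must be produced via Lemma \ref{morformula} from BSCC curves bounding disks in $V$ obtained as neighborhoods of connected sums of meridians; it is there, and in the one-contraction cases, that $g\geq 4$ is repeatedly used.
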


The inclusion $\tau_2(\mathcal{A} \cap J_2) \subset  \operatorname{Ker}( \operatorname{Tr}^A) \cap \operatorname{Ker}( \operatorname{Tr}^{as})$ follows from Theorems \ref{theoremtraces} and \ref{theoremtrA}. Recall that the elements in $D_2(H)$ are expansions of trees and halfs of symmetric trees, as explained in Section \ref{sec2}. As before, identify a tree with 4 leaves with its expansion in $D_2(H)$. A symplectic basis $(a_i,b_i)$ of $H$ is chosen so that the $a_i$'s generate the Lagrangian subgroup $A \subset H$ which is involved in the definition of $\operatorname{Tr}^A$. We denote by $B$ the Lagrangian generated by the $b_i$'s. Now, notice that trees with $0 \leq k \leq 4$ leaves colored by elements among the $a_i$'s and $4-k$ colored by elements among the $b_i$'s give, after projection, generators of the quotient $\mathcal{F}_{k-1} / \mathcal{F}_k$. We call such trees \emph{trees of type $k$}. For example $\mathcal{F}_0/\mathcal{F}_1$ is generated by trees of type $1$. Also an element of $\mathcal{F}_0$ can be written as a linear combination of elements of type $1$ to $4$.  

We will use several times the following lemma.

\begin{lemma}
Let \ses{K}{F}{C} be a short exact sequence of finitely generated abelian groups. We suppose that $C$ is a free abelian group or a $\mathbb{Z}_2$-vector space, and that we have a generating family $(f_i)_{0 \leq i \leq f}$ of $F$ and a basis $(c_j)_{0 \leq j \leq c}$ for $C$ such that $(f_i)$ consists of elements of $K$ and lifts of elements of $(c_j)$. Then, $K$ is generated by \[(\{f_i \mid 0 \leq i \leq f \} \cap K) \cup (\{f_i - f_j \mid 0 \leq i < j \leq f \} \cap K)\] if $C$ is free abelian and by \[(\{f_i \mid 0 \leq i \leq f \} \cap K) \cup (\{f_i - f_j \mid 0 \leq i < j \leq f \} \cap K) \cup \{2f_i \mid 0 \leq i \leq f \}\] if $C$ is a $\mathbb{Z}_2$-vector space.
\label{linalglemma}
\end{lemma}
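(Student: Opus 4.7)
The plan is a direct linear-algebra argument. I would start with an arbitrary $x \in K$ and use that $(f_i)$ generates $F$ to write $x = \sum_i n_i f_i$ with $n_i \in \mathbb{Z}$. Next I would partition the index set into $I_K := \{i : f_i \in K\}$ and, for each $j$, $I_j := \{i : f_i \text{ is a lift of } c_j\}$. Projecting the equation to $C$ and using $x \in K$ would yield $\sum_j \bigl(\sum_{i \in I_j} n_i\bigr) c_j = 0$ in $C$. Since $(c_j)$ is a basis, this gives $\sum_{i \in I_j} n_i = 0$ in the free abelian case, and $\sum_{i \in I_j} n_i \equiv 0 \pmod 2$ in the $\mathbb{Z}_2$ case, for each $j$.

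The key step is then to rewrite $\sum_{i \in I_j} n_i f_i$ for each $j$. I would pick some $i_0 = i_0(j) \in I_j$ (this exists because the images of the $f_i$ must generate $C$ and the only possible nonzero images lie among the $c_j$). The elementary identity
\[ \sum_{i \in I_j} n_i f_i \;=\; \sum_{i \in I_j \setminus \{i_0\}} n_i (f_i - f_{i_0}) \;+\; \Bigl(\sum_{i \in I_j} n_i\Bigr) f_{i_0} \]
is the crux of the argument. Each $f_i - f_{i_0}$ with $i, i_0 \in I_j$ lies in $K$, since both $f_i$ and $f_{i_0}$ have image $c_j$ in $C$. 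In the free abelian case, the coefficient of $f_{i_0}$ vanishes, so the entire expression is an integer combination of elements of the form $f_i - f_{i_0} \in K$. In the $\mathbb{Z}_2$ case, that coefficient is even, and contributes an integer multiple of $2 f_{i_0}$. Adding in $\sum_{i \in I_K} n_i f_i$, already in $K$, exhibits $x$ as an integer combination of the claimed generators.

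There is no genuine obstacle beyond careful bookkeeping; the lemma is a formal consequence of writing out coordinates with respect to the basis $(c_j)$ and applying the trivial ``vanishing of the sum of coefficients implies a telescoping by differences'' trick to each fiber $I_j$. The hypothesis that $C$ is either free abelian or a $\mathbb{Z}_2$-vector space is used solely to ensure that the relation $\sum_j (\sum_{i \in I_j} n_i) c_j = 0$ forces each partial sum $\sum_{i \in I_j} n_i$ to vanish (respectively to be even), which is exactly what is needed to control the residual $f_{i_0}$-term.
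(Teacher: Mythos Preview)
Your proof is correct and follows essentially the same approach as the paper's: write $x\in K$ in the generators, project to $C$, use that $(c_j)$ is a basis to force the partial coefficient sums over each fiber to vanish (or be even), and then rewrite each fiber sum in terms of differences. The only cosmetic difference is that the paper rewrites $\sum_{i\in I_j} n_i f_i$ via a telescoping sum of consecutive differences $l'_{s+1}-l'_s$, whereas you pick a single basepoint $i_0\in I_j$ and use $f_i-f_{i_0}$; both are standard and yield the claimed generating set.
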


\begin{proof}
Let us suppose that $C$ is a free abelian group. Let $p : F \rightarrow C$ denote the projection. By our hypothesis, for every $j \in \lbrace 0, \dots , c \rbrace$, $c_j$ has a lift among the $f_i$'s. We denote $(k_i)_{0 \leq i \leq \kappa}$ the elements among the $f_i$'s that are in $K$ and $(l_i)_{0 \leq i \leq l}$ the other ones. Now, for any $x \in K$, we can write $x = \sum \limits_{i \leq \kappa} \lambda_i k_i + \sum \limits_{j \leq l} \mu_j \l_j$, with $\lambda_i, \mu_j \in \mathbb{Z}$. We thus have $( \sum  \limits_{p(l_j) = c_i} \mu_j )c_i = 0 $, hence  $\sum \limits_{p(l_j) = c_i}\mu_j = 0$ for every $i \in \lbrace 0, \dots , c \rbrace$. Fix $1 \leq i \leq c$ and consider the $l_j$'s such that $p(l_j) = c_i$, and renumber in a simpler way the elements (denoted by $\mu'$ and $l'$ after renumbering) from $0$ to $n_i$ such that: $\sum \limits_{p(l_j) = c_i} \mu_j l_j = \sum \limits_{j=0}^{n_i} \mu'_j l'_j = \sum \limits_{j=0}^{n_i} \sum \limits_{s=0}^{j-1} \mu_j' (l'_{s+1}-l'_{s})$, where we used that $\sum \limits_{j=0}^{n_i} \mu'_j = 0$. This computation allows us to write $x$ as a linear combination of the $k_i$'s and elements $l_i - l_j$ such that ${p(l_i) = p(l_j)}$. For the case where $C$ is a $\mathbb{Z}_2$-vector space, the proof can be easily adapted.
\end{proof}

\begin{rem}
The generating family provided by Lemma \ref{linalglemma} is far from being optimal. For example, given $x,y,z \in F$ with the same image in $C$, one does not need to take $(x-y)$, $(x-z)$ and $(y-z)$, as the last one is a linear combination of the other two.
\end{rem}

Let $T$ be in $\mathcal{F}_0 \cap  \operatorname{Im}(\tau_2)$ and write it as $T_1 + T_{\geq 2}$  where $T_1$ and $T_{\geq 2}$ are written as some linear combinations of respectively type $1$ elements and type $2$ to $4$ elements. We suppose that $ \operatorname{Tr}^A(T_1 + T_{\geq 2}) = 0$ i.e. $ \operatorname{Tr}^A(T_1) = 0$. Using the special elements of $\AJ$ described in Section \ref{section51} we are going to show that $T \in \tau_2(\mathcal{A} \cap J_2)$. 

From now on, we refer to the element in $\tau_2(\mathcal{A} \cap J_2)$ as \emph{realizable} elements. We also say that a tree of type $0$ to $4$ has a \emph{contraction} when at least two of its leaves can be paired non-trivially through $\omega$. Some of the computations below are inspired by computations in \cite{mor} and \cite{pit}.

For the sake of preciseness, we emphasize that for two submodules $P$ and $Q$ of a module $V$, the notation $P \wedge Q$ stands for the image of $P \otimes Q$ under the projection $V \otimes V \rightarrow \Lambda^2(V)$. From Section \ref{sec2} we have that $ \operatorname{Tr}^{as}$ vanishes on elements of $ \operatorname{Im}(\tau_2)$. On an element of type~$1$ this trace takes value in $(B \wedge B) \otimes \mathbb{Z}_2$ and on other types it takes values in $(A \wedge H)\otimes \mathbb{Z}_2$. Hence, using the decomposition $ \Lambda^2 H = (B \wedge B) \oplus (A \wedge H)$, it is clear that $ \operatorname{Tr}^{as}(T_1 + T_{\geq 2}) = 0$ implies  $\operatorname{Tr}^{as}(T_1) =   \operatorname{Tr}^{as}(T_{\geq 2}) = 0$. In the sequel, we shall prove that $T_1 \in \tau_2(\AJ)$ and, next, we will show that  $T_{\geq 2} \in \tau_2(\AJ)$ using the fact that $\operatorname{Tr}^{as}(T_{\geq 2})=0$.

In terms of the symplectic basis $(a_i,b_i)$ of $H$, the elements of type $1$ can be of the following form (up to sign): \[ {\circled{1}}_{i,j,k,l} := \ltree{$a_i$}{$b_j$}{$b_k$}{$b_l$} \quad \quad \quad \quad {\circled{2}}_{i,j,k} :=\ltree{$a_i$}{$b_j$}{$b_k$}{$b_i$}\] \[ {\circled{3}}_{i,k,l} :=\ltree{$a_i$}{$b_i$}{$b_k$}{$b_l$} \quad \quad \quad \quad {\circled{4}}_{i,k} :=\ltree{$a_i$}{$b_i$}{$b_k$}{$b_i$}\]

\noindent with $i$ different from $j,k$ and $l$. 

\begin{propo}
Set $N :=  \operatorname{Ker}( \operatorname{Tr}^A : \operatorname{Span}_\mathbb{Z}\{ \textit{type 1 elements} \} \rightarrow S^2(H'))$. Then $N$ is generated by elements of type $\circled{$1$},\circled{$3$}$ and \[ \circled{$2$}_{i,j,j} - \circled{$2$}_{i',j,j}~; \quad \circled{$2$}_{i,j,k} - \circled{$2$}_{i',j,k}~; \quad \circled{$2$}_{i,j,k} - \circled{$2$}_{i',k,j}~; \]
\[ \circled{$2$}_{i,j,k} - \circled{$4$}_{j,k}~; \quad \circled{$2$}_{i,j,k} - \circled{$4$}_{k,j}~; \quad \circled{$4$}_{i,k} - \circled{$4$}_{k,i}~;\]
\noindent where $i$ and $i'$ must be different from $j,k$ and $l$, and $j \neq k$.
\label{defN}
\end{propo}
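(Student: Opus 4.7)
The plan is to apply Lemma \ref{linalglemma} to the short exact sequence
\[ 0 \longrightarrow N \longrightarrow F \longrightarrow \operatorname{Im}(\operatorname{Tr}^A_{\mid F}) \longrightarrow 0, \]
where $F := \operatorname{Span}_{\mathbb{Z}} \{\text{type 1 elements}\}$. The whole proposition reduces to a direct computation followed by a case analysis on pairs of generators with matching image.

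First, I would compute $\operatorname{Tr}^A$ on each of the four families $\circled{1},\circled{2},\circled{3},\circled{4}$ using the formula \eqref{eqexemple} of Example \ref{ex43} and the fact that $\omega(a_i,b_j) = \delta_{ij}$. For $\circled{1}_{i,j,k,l}$ and $\circled{3}_{i,k,l}$ every pairing $\omega(a_i, b_?)$ that appears vanishes by the assumption $i \neq j,k,l$, so both types lie in $N$. On the other hand
\[ \operatorname{Tr}^A\bigl(\circled{2}_{i,j,k}\bigr) = b_j' b_k', \qquad \operatorname{Tr}^A\bigl(\circled{4}_{i,k}\bigr) = b_i' b_k', \]
since in each case exactly one of the two terms of \eqref{eqexemple} survives (the one involving $\omega(a_i,b_i)=1$). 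In particular, $\operatorname{Im}(\operatorname{Tr}^A_{\mid F})$ is all of $S^2(H')$, which is free abelian with basis $\{b_i' b_j' : i \leq j\}$, and each such basis element is the image of some $\circled{2}_{i_0, j, k}$ (choosing any index $i_0 \neq j,k$, which exists for $g \geq 3$).

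Next, I would verify directly that each element listed in the proposition is in $N$, which is immediate from the previous computation: the two $\circled{2}$'s (or $\circled{2}$ and $\circled{4}$, or two $\circled{4}$'s) appearing in each difference have the same image in $S^2(H')$ thanks to the symmetry $b_j' b_k' = b_k' b_j'$.

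Then I would apply Lemma \ref{linalglemma} to conclude that $N$ is generated by (i) the type 1 elements lying in $N$ themselves, i.e.\ the $\circled{1}$'s and $\circled{3}$'s, together with (ii) all differences $T - T'$ of type 1 generators whose $\operatorname{Tr}^A$-images coincide. To finish, I would enumerate the cases for (ii): since only $\circled{2}$'s and $\circled{4}$'s have non-zero image, any non-trivial difference is between two such elements. The possible matchings are $\operatorname{Tr}^A(\circled{2}_{i,j,k}) = \operatorname{Tr}^A(\circled{2}_{i',j,k}) = \operatorname{Tr}^A(\circled{2}_{i',k,j})$, $\operatorname{Tr}^A(\circled{2}_{i,j,k}) = \operatorname{Tr}^A(\circled{4}_{j,k}) = \operatorname{Tr}^A(\circled{4}_{k,j})$, and $\operatorname{Tr}^A(\circled{4}_{i,k}) = \operatorname{Tr}^A(\circled{4}_{k,i})$, plus the degenerate case $j = k$ giving $\circled{2}_{i,j,j} - \circled{2}_{i',j,j}$. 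These account exactly for the six families of differences written in the statement, and every other valid difference $T - T'$ is a $\mathbb{Z}$-linear combination of these (for instance, $\circled{4}_{j,k} - \circled{4}_{k,j}$ is a combination of $\circled{2}_{i,j,k} - \circled{4}_{j,k}$ and $\circled{2}_{i,j,k} - \circled{4}_{k,j}$), so we are done.

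No step is a real obstacle: the only subtle point is being exhaustive in the case analysis, in particular keeping track of the symmetry $b_j' b_k' = b_k' b_j'$ in $S^2(H')$, which is what forces one to include both $\circled{2}_{i,j,k} - \circled{2}_{i',k,j}$ and $\circled{2}_{i,j,k} - \circled{4}_{k,j}$ in the list.
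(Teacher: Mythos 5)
Your proposal is correct and follows essentially the same route as the paper: compute $\operatorname{Tr}^A$ on the four families using formula \eqref{eqexemple} (finding that $\circled{1}$ and $\circled{3}$ lie in the kernel while $\circled{2}_{i,j,k}\mapsto b_k'b_j'$ and $\circled{4}_{i,k}\mapsto b_k'b_i'$), then apply Lemma \ref{linalglemma} to the short exact sequence with the type 1 elements as generating family and $(b_i'b_j')_{i\leq j}$ as basis of $S^2(H')$. Your case analysis of which generators share an image, and the observation that the symmetry $b_j'b_k'=b_k'b_j'$ is what produces the cross terms like $\circled{2}_{i,j,k}-\circled{2}_{i',k,j}$, just makes explicit what the paper leaves to the reader.
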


\begin{proof}
It is a consequence of Lemma \ref{linalglemma} applied to the short exact sequence \ses{N}{\operatorname{Span}_\mathbb{Z}\{ \textit{type 1 elements} \}}{S^2(H')} after computing that \begin{align*}
 \operatorname{Tr}^{A}(\circled{1}_{i,j,k,l}) &= 0 \\
 \operatorname{Tr}^{A}(\circled{3}_{i,k,l}) &=  0\\
 \operatorname{Tr}^{A}(\circled{2}_{i,j,k}) &= +b_k'b_j' \\
 \operatorname{Tr}^{A}(\circled{4}_{i,k}) &= + b_k'b_i'.
\end{align*}
Here, the generating family for $\operatorname{Span}_\mathbb{Z}\{ \textit{type 1 elements} \}$ is the family of type 1 elements, and the basis we use for $S^2(H')$ is $(b_i'b_j')_{1 \leq i \leq j \leq g}$. 
\end{proof}
We are going to show that $N \subset \tau_2(\AJ)$, in particular we will have $T_1 \in \tau_2(\AJ)$. First, ${\circled{1}}_{i,j,k,l}$ can be written as $\ltree{$a_i$}{$b_j$}{$b_k$}{$b_l$} = \Bigg[ -\ltritree{$a_i$}{$b_j$}{$b_m$} ,\ltritree{$b_l$}{$a_m$}{$b_k$} \Bigg]$ where $m$ is different from $j$. Morita has shown, as stated in Proposition \ref{morprop}, that (the expansion of) a tree (with three leaves) in $D_1(H)$ is in $\tau_1(\mathcal{A}\cap J_1)$ if and only if one of the leaves vanishes in $H'$ \cite{mor93}, where $D_1(H)= \Lambda^3 H$ has been identified with $\mathcal{A}_1^t(H)$. Hence ${\circled{1}}_{i,j,k,l}$ is indeed in $\tau_2(\mathcal{A} \cap J_2)$, obtained as the image by $\tau_2$ of an element of the third family defined in Section \ref{section51}: a commutator of the Torelli handlebody group. This is also true for ${\circled{3}}_{i,k,l} = \Bigg[-\ltritree{$a_i$}{$b_i$}{$b_m$} ,\ltritree{$b_l$}{$a_m$}{$b_k$} \Bigg]$ with $m \neq i$. Now, we are left with the generators of $N$ built in Proposition \ref{defN} from $\circled{2}$ and $\circled{4}$ elements. One can check that:
\begin{align*}
\circled{2}_{i,j,j} - \circled{2}_{i',j,j} & = \Bigg[-\ltritree{$a_i$}{$b_j$}{$b_{i'}$} ,\ltritree{$b_i$}{$a_{i'}$}{$b_j$} \Bigg] \\
\circled{2}_{i,j,k} - \circled{2}_{i',k,j} & = \Bigg[-\ltritree{$a_i$}{$b_j$}{$b_{i'}$} ,\ltritree{$b_i$}{$a_{i'}$}{$b_k$} \Bigg] \\ \circled{2}_{i,j,k} - \circled{2}_{i',j,k} &=  \Bigg[-\ltritree{$a_i$}{$b_j$}{$b_{i'}$} ,\ltritree{$b_i$}{$a_{i'}$}{$b_k$} \Bigg] {+} \ltree{$a_{i'}$}{$b_{i'}$}{$b_j$}{$b_k$},
\end{align*}
\noindent and that:
\begin{align*}
\circled{2}_{i,j,k} - \circled{4}_{j,k}&= \Bigg[-\ltritree{$a_i$}{$b_j$}{$b_{l}$} ,\ltritree{$b_i$}{$a_{l}$}{$b_k$} \Bigg] + \Bigg[\ltritree{$a_j$}{$b_j$}{$b_{l}$} ,\ltritree{$b_j$}{$a_{l}$}{$b_k$} \Bigg] \\
\circled{2}_{i,j,k} - \circled{4}_{k,j} &= \quad (\circled{2}_{i,j,k} - \circled{4}_{j,k}) + (\circled{4}_{j,k} - \circled{4}_{k,j})
\end{align*}
\begin{align*}
\circled{4}_{i,k} - \circled{4}_{k,i} &= \Bigg[-\ltritree{$a_i$}{$b_i$}{$b_{l}$} ,\ltritree{$b_i$}{$a_{l}$}{$b_k$} \Bigg] + \Bigg[\ltritree{$a_k$}{$b_k$}{$b_{l}$} ,\ltritree{$b_k$}{$a_{l}$}{$b_i$} \Bigg]\\ &{+} \ltree{$a_l$}{$b_{l}$}{$b_i$}{$b_k$},
\end{align*}

\noindent with $l$ always chosen so that it does not add any contraction, which is possible if the genus is greater or equal to $4$. We know how to show that each of the terms are in $\tau_2(\AJ)$, because:

\begin{align*}
\ltree{$a_{i'}$}{$b_{i'}$}{$b_j$}{$b_k$} &=  \Bigg[ - \ltritree{$a_{i'}$}{$b_{i'}$}{$b_{l}$} ,\ltritree{$b_k$}{$a_{l}$}{$b_j$} \Bigg] \textit{ with } l \neq i' \\ 
\end{align*}

\noindent for $i' \neq j,k$; and all of these terms are in the image of elements of the third kind described in Section \ref{section51}. Hence $N \subset \tau_2(\AJ)$.
\begin{rem}
One can notice that all the trees that have been used above to realize elements of $N$ as linear combination of Lie brackets of elements of $\tau_1(\mathcal{A}\cap J_1)$ are colored by elements of $A$ \emph{and} elements of $B$ (and never only by $A$ or only by $B$).
\label{remcoloration}
\end{rem}
We now turn to the element $T_{\geq 2}$. We remark that $(A \wedge H) = (A\wedge A)  \oplus (A\wedge B)$, hence if write $T_{\geq 2} = T_2 +T_{\geq 3}$ where $T_2$ is a linear combination of type two elements and $T_{\geq 3}$ a linear combination of type 3 and 4 elements, then we have $ \operatorname{Tr}^{as}(T_2) =  \operatorname{Tr}^{as}(T_{\geq 3}) =0$, because $\operatorname{Tr}^{as}(T_{\geq 2}) = 0$. We will deal first with $T_2$. By the $IHX$ relation, we can even restrict our type two elements appearing in the writing of $T_2$ to trees where the two $A$ colors are not ``close" to each other, i.e. trees of the form: \[ {\circled{5}}_{i,j,k,l} := \ltree{$a_i$}{$b_j$}{$b_k$}{$a_l$} \quad {\circled{6}}_{i,j} := \frac{1}{2} \ltree{$a_i$}{$b_j$}{$a_i$}{$b_j$} \]

\noindent with no conditions on the indices. It is known, by Morita's formula in Lemma \ref{morformula}, that the elements of the form ${\circled{6}}_{i,i}$ can be obtained as the image under $\tau_2$ of a Dehn twist along a curve $\gamma_{i}$ bounding a subsurface with $a_i, b_i$ forming a symplectic basis of this subsurface. This curve can be chosen to bound a disk in the handlebody (see Figure \ref{surfcurves}) so that the corresponding Dehn twist is an element of the first kind described in Section \ref{section51}. Hence, ${\circled{6}}_{i,i}$ belongs to $\tau_2(\AJ)$, and we now suppose $i\neq j$ in the definition of ${\circled{6}}_{i,j}$. 

We show that ${\circled{5}}_{i,i,j,j}$ is realizable. This will be useful in the computations below. Take disjoints neighborhoods of, respectively, $\alpha_i \cup \beta_i$ and $\alpha_j \cup \beta_j$, and band this two genus 1 surfaces as shown in Figure \ref{surfcurves}. The boundary $\gamma_{i,j}$ of the resulting genus 2 surface is bounding a disk in the handlebody and its image by $\tau_2$ (using Lemma \ref{morformula}) is: \[\tau_2(T_{\gamma_{i,j}}) = {\circled{6}}_{i,i} - {\circled{5}}_{i,i,j,j} +{\circled{6}}_{j,j}\] which shows that ${\circled{5}}_{i,i,j,j} \in \tau_2(\AJ)$. 

\begin{figure}[h]
	\centering
	\includegraphics[scale= 0.2]{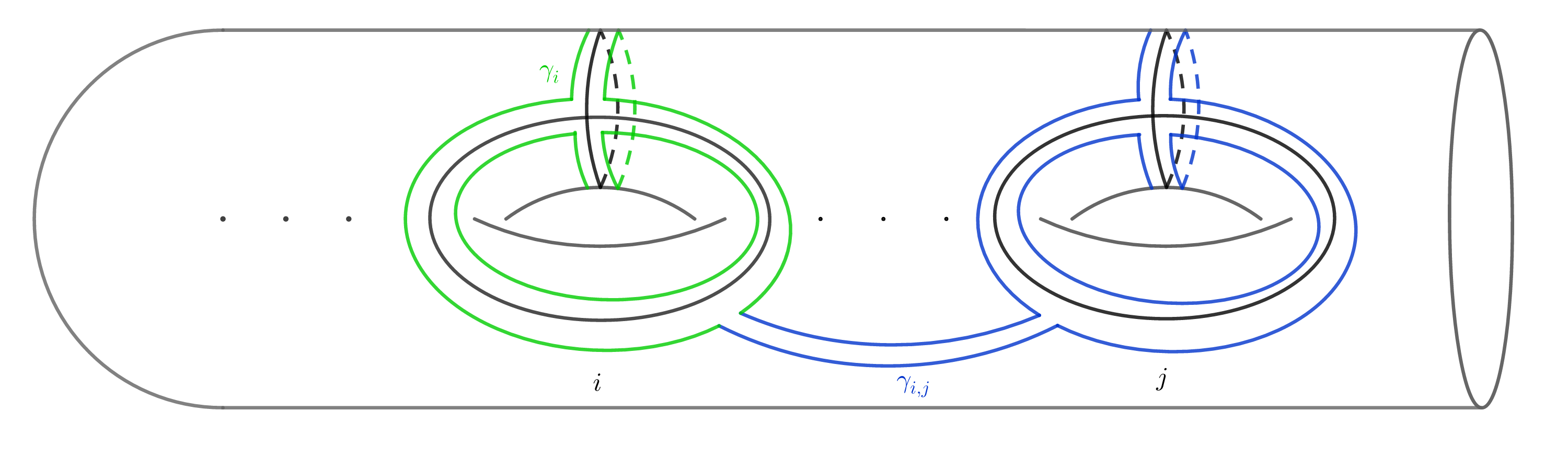}
	\caption{Curves $\gamma_i$ and $\gamma_{i,j}$.}
	\label{surfcurves}
\end{figure}

We divide cases in terms of the number of leaves that contract in $\circled{5}_{i,j,k,l}$. If there is no contraction ($j\neq l$ and $k\neq i$), then $\circled{5}_{i,j,k,l}$ can be easily obtained as a commutator of trees with a leaf in $A$, supposing $g \geq 4$. If there are 2 contractions, then $k=i$ and $j=l$, which yields two cases: if $i=j$ then we get $-2{\circled{6}}_{i,i}$, which we have already dealt with; if not, we get an element ${\circled{5}}_{i,j,i,j} \notin \operatorname{Ker}(\operatorname{Tr}^{as})$. If there is only one contraction, then up to symmetry ($\circled{5}_{i,j,k,l} = \circled{5}_{l,k,j,i}$) we can suppose that $k=i$ and $j\neq l$. Hence the remaining element $T'_{2}$ (the part of $T_2$ which is not yet proved to be in $\tau_2(\AJ)$) is a linear combination of trees of the form ${\circled{5}}_{i,j,i,j}$ (with $i \neq j$), ${\circled{5}}_{i,j,i,l}$ (with $l \neq j$) and ${\circled{6}}_{i,j}$ (with $i \neq j$) such that :

\begin{align*}
\operatorname{Tr}^{as}({\circled{5}}_{i,j,i,j}) &= a_i \wedge b_i + a_j \wedge b_j \quad \textit{with }i \neq j \\
 \operatorname{Tr}^{as}({\circled{5}}_{i,j,i,l}) &= a_l \wedge b_j \quad \textit{with } l\neq j \\
 \operatorname{Tr}^{as}({\circled{6}}_{i,j}) &= a_i \wedge b_j \quad \textit{with }i \neq j,
\end{align*}
and satisfies $\operatorname{Tr}^{as}(T_{2}') = 0$.

Notice that in $\operatorname{Ker}(\omega : \Lambda^2(H/2H) \rightarrow \mathbb{Z}_2)$ the two subspaces $\operatorname{Span}_{\mathbb{Z}_2} \{ a_i \wedge b_j \mid i \neq j \}$ and $\operatorname{Span}_{\mathbb{Z}_2} \{ a_i \wedge b_i + a_j \wedge b_j \mid i \neq j \}$ have trivial intersection. This allow us to write $T'_2$ as a sum of two elements, say $U$ and $V$, the element $U$ being in $\operatorname{Span}_\mathbb{Z} \Big \{ {\circled{5}}_{i,j,i,l},{\circled{6}}_{i,j} \mid j \neq l \Big \} $ and $V$ being in $\operatorname{Span}_{\mathbb{Z}} \Big \{ \circled{5}_{i,j,i,j} \mid i \neq j\Big \}$, such that $\operatorname{Tr}^{as}(U)= \operatorname{Tr}^{as}(V) = 0$. The element $U$ can be written as a linear combination of \[ {\circled{5}}_{i,j,i,l} \pm {\circled{5}}_{i',j,i',l} \quad; \quad 2{\circled{6}}_{i,j} \quad ; \quad {\circled{5}}_{i,j,i,l} \pm {\circled{6}}_{l,j}\]

\noindent by Lemma \ref{linalglemma} applied to the short exact sequence
\begin{equation}
\begin{tikzcd}[sep= small]
 0 \arrow[r]& K \arrow[r] & \operatorname{Span}_\mathbb{Z} \Big \{ {\circled{5}}_{i,j,i,l} ~,~{\circled{6}}_{i,j} \mid j \neq l \Big \}  \arrow[r, "\operatorname{Tr}^{as}"]& \operatorname{Span}_{\mathbb{Z}_2} \{ a_i \wedge b_j \mid i \neq j \} \arrow[r] &0 
\end{tikzcd}
\label{defK}
\end{equation}
where the generating family for $\operatorname{Span}_\mathbb{Z} \Big \{ {\circled{5}}_{i,j,i,l}~,~{\circled{6}}_{i,j} \mid j \neq l \Big \} $ and the basis for $\operatorname{Span}_{\mathbb{Z}_2}\{a_i~\wedge~b_j\mid i \neq j \}$ are given in their definition. The tree $2{\circled{6}}_{i,j}$ has no contractions and can be realized as a commutator of the Torelli handlebody group. We also have, with $r \neq i,j,l$:

\begin{align*}
\circled{5}_{i,j,i,l} + \circled{5}_{i',j,i',l} & = \Bigg[ \ltritree{$a_i$}{$b_j$}{$a_{i'}$} ,\ltritree{$a_l$}{$b_{i'}$}{$b_i$} \Bigg] \textit{  as }l\neq j,\end{align*}
\begin{align*}
2\circled{5}_{i,j,i,l} & = \Bigg[ \ltritree{$a_i$}{$b_j$}{$a_r$} ,\ltritree{$a_l$}{$b_{r}$}{$b_i$} \Bigg] - \Bigg[ \ltritree{$a_i$}{$b_j$}{$b_r$} ,\ltritree{$a_l$}{$a_{r}$}{$b_i$} \Bigg] \\&{-} \Bigg[ \ltritree{$a_r$}{$b_r$}{$a_i$} ,\ltritree{$a_l$}{$b_{i}$}{$b_j$} \Bigg]  \textit{  as }l\neq j,
\end{align*}

\noindent and with the same arguments as above these elements are realizable (using the first family described in Section \ref{section51}). For elements involving $\circled{5}$ and $\circled{6}$, if $i \neq j$, we have: 

\begin{align*}
\circled{6}_{l,j} - \circled{5}_{i,j,i,l} & = \circled{6}_{l,j} + \Bigg[- \ltritree{$a_i$}{$b_j$}{$a_{j}$} ,\ltritree{$a_l$}{$b_{j}$}{$b_i$} \Bigg]  {-} \ltree{$b_j$}{$a_j$}{$b_j$}{$a_l$}.
\end{align*}

\noindent As $\circled{6}_{l,j} - \ltree{$b_j$}{$a_j$}{$b_j$}{$a_l$} + \circled{6}_{j,j}$ can be obtained from the Dehn twist along the boundary of a neighborhood of $(\alpha_j \sharp \alpha_l^{-1}) \cup \beta_j$ (where $(\alpha_j \sharp \alpha_l^{-1})$ denotes the connected sum of $\alpha_j$ and $\alpha_l$), and knowing that $\circled{6}_{j,j}$ is in $\tau_2(\AJ)$ we conclude that $\circled{6}_{l,j} - \circled{5}_{i,j,i,l}$ is realizable for $i \neq j$. If $i = j$, then we write, for some $j' \neq i$:  \begin{align}
\circled{6}_{l,j} - \circled{5}_{j,j,j,l} = (\circled{6}_{l,j} - \circled{5}_{j',j,j',l}) + (\circled{5}_{j',j,j',l}-\circled{5}_{j,j,j,l})
\label{elem5}
\end{align}  \noindent and we have just shown that both terms of this sum are realizable. We conclude that $U$ is realizable.

We need to show that $V$ is also realizable, which will show that $T'_2$, and hence $T_2$ are also realizable. We need the following.
\begin{lemma}
The kernel $S$ in the short exact sequence \[ 
\begin{tikzcd}
 0 \arrow[r]& S \arrow[r] & \operatorname{Span}_{\mathbb{Z}} \Big \{ {\circled{$5$}}_{i,j,i,j} \mid i \neq j  \Big \}  \arrow[r, "\operatorname{Tr}^{as}"]& \operatorname{Span}_{\mathbb{Z}_2} \{ a_i \wedge b_i + a_j \wedge b_j \mid i \neq j \} \arrow[r] &0 
\end{tikzcd} 
\] \noindent is generated by the family \[ \Big \{ {\circled{5}}_{1,i,1,i} + {\circled{5}}_{i,j,i,j}  + {\circled{5}}_{j,1,j,1} \mid i \neq j \Big \} \cup  \Big \{ 2{\circled{5}}_{i,j,i,j} \mid i \neq j \Big \}. \]
\label{lemmeS}
\end{lemma}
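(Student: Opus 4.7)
The plan is to transcribe the lemma into a linear-algebra statement about the complete graph $K_g$ and then invoke the classical description of its cycle space.

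First I would identify $\operatorname{Span}_\mathbb{Z}\{\circled{5}_{i,j,i,j} \mid i \neq j\}$ with the free abelian group on the edge set of $K_g$. The symmetry $\circled{5}_{i,j,k,l}=\circled{5}_{l,k,j,i}$ (already used in the text just before Lemma~\ref{lemmeS}) shows that $\circled{5}_{i,j,i,j}=\circled{5}_{j,i,j,i}$, so the generators are indexed by unordered pairs $\{i,j\}$ with $i\neq j$; denote $X_{\{i,j\}}:=\circled{5}_{i,j,i,j}$.

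Second, a direct application of the formula defining $\operatorname{Tr}^{as}$ (with $a=a_i$, $b=b_j$, $c=b_i$, $d=a_j$, using $\omega(a_i,b_i)=1$ and the vanishing of the other pairings) gives
\[
\operatorname{Tr}^{as}(X_{\{i,j\}}) \;=\; a_i\wedge b_i + a_j\wedge b_j \pmod 2.
\]
Since the family $(a_i\wedge b_i)_{1\leq i\leq g}$ is linearly independent in $\Lambda^2(H/2H)$, an element $\sum_{i<j}n_{ij}X_{\{i,j\}}$ lies in $S$ if and only if for every $i$ one has $\sum_{j\neq i}n_{ij}\equiv 0\pmod 2$. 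In other words, viewing $(n_{ij})$ as an integral $1$-chain on $K_g$, the reduction $(n_{ij}\bmod 2)$ is a cycle in $C_1(K_g;\mathbb{F}_2)$.

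Third, I would recall the standard description of the $\mathbb{F}_2$-cycle space of $K_g$: the star at vertex $1$ is a spanning tree, and the associated fundamental cycles are precisely the triangles through vertex~$1$. Hence $Z_1(K_g;\mathbb{F}_2)$ is freely generated over $\mathbb{F}_2$ by the triangles
\[
\bar T_{i,j}\;:=\;X_{\{1,i\}}+X_{\{i,j\}}+X_{\{1,j\}}\pmod 2,\qquad 2\leq i<j\leq g,
\]
which are exactly the mod-$2$ reductions of the integer elements $T_{i,j}:=\circled{5}_{1,i,1,i}+\circled{5}_{i,j,i,j}+\circled{5}_{j,1,j,1}$ appearing in the statement (and these clearly lie in $S$).

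Finally, to conclude: given $\xi\in S$, its mod-$2$ reduction is an $\mathbb{F}_2$-cycle, so one can subtract an $\mathbb{F}_2$-linear combination of the $\bar T_{i,j}$ from it to make it identically zero modulo $2$. Lifting to $\mathbb{Z}$, this means $\xi$ is congruent modulo the subgroup generated by the $T_{i,j}$ to an element of $2\cdot\mathbb{Z}\{X_{\{i,j\}}\}$, which is in turn generated by the elements $2X_{\{i,j\}}=2\,\circled{5}_{i,j,i,j}$. This proves the claimed generation.

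The only point requiring slight care—but not a genuine obstacle—is the bookkeeping identifying the two indexings $\circled{5}_{i,j,i,j}=\circled{5}_{j,i,j,i}$, so that $\sum n_{ij}X_{\{i,j\}}$ really is a well-defined $\mathbb{Z}$-chain on $K_g$; everything else is standard graph-theoretic linear algebra, and the hypothesis $g\geq 4$ inherited from Theorem~\ref{lastthm} is more than enough to make all the triangles well-defined.
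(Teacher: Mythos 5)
Your proof is correct, but it takes a genuinely different route from the paper's. The paper first checks (via the expansion map into $(H\otimes \mathbb{Q})^{\otimes 4}$) that the family $\{\circled{5}_{i,j,i,j} \mid i<j\}$ is free, then applies its general-purpose Lemma \ref{linalglemma} to the sequence with the ad hoc generating family $\{\circled{5}_{i,j,i,j}-\circled{5}_{j,1,j,1}\}\cup\{\circled{5}_{j,1,j,1}\}$ and the basis $(a_1\wedge b_1+a_i\wedge b_i)_{2\le i\le g}$; this produces a larger, messier set of generators of $S$, which the paper then reduces by explicit algebraic manipulation to the triangles and doubled edges. You instead recognize $\operatorname{Tr}^{as}$ on this span as the mod-$2$ boundary map of the complete graph $K_g$, so that $S$ is the integral preimage of the cycle space $Z_1(K_g;\mathbb{F}_2)$, and you invoke the fundamental-cycle basis associated with the spanning star at vertex $1$; the lifting step ($S=\langle T_{i,j}\rangle + 2\,C_1(K_g;\mathbb{Z})$) is then immediate. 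Your argument is cleaner and explains \emph{why} the generators are exactly triangles through a fixed vertex plus doubled edges, whereas the paper's stays within the uniform Lemma-\ref{linalglemma} machinery it reuses for $N$, $K$ and $P$. One small remark: the $\mathbb{Z}$-freeness of the $\circled{5}_{i,j,i,j}$, which the paper proves and which your identification with $C_1(K_g;\mathbb{Z})$ implicitly asserts, is not actually needed for your argument --- the cycle condition on the coefficients of any chosen representation follows from the well-definedness of $\operatorname{Tr}^{as}$ alone, and the final subtraction only requires \emph{some} representation with even coefficients --- so you could either cite the paper's freeness verification or note that your proof goes through without it.
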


\begin{proof}
It is not hard to see, by sending the family $ \Big \{ {\circled{$5$}}_{i,j,i,j} \mid i <j \Big \} $ to $(H\otimes \mathbb{Q})^{\otimes 4}$ through the expansion map and the inclusion $\mathcal{L}(H \otimes \mathbb{Q}) \subset T(H \otimes \mathbb{Q})$, that this family is free. Indeed, ${\circled{$5$}}_{i,j,i,j}$ is sent to a sum of 16 terms, from each of which one can recover $i$ and $j$. Each of these terms belongs (up to a sign) to the basis of $(H\otimes \mathbb{Q})^{\otimes 4}$ induced by the basis chosen for~$H$. Hence, $\operatorname{Span}_{\mathbb{Z}} \Big \{ {\circled{$5$}}_{i,j,i,j} \mid i \neq j  \Big \}$ is free and we can apply Lemma \ref{linalglemma} to the short exact sequence by using the family $ \Big \{ {\circled{$5$}}_{i,j,i,j} - {\circled{$5$}}_{j,1,j,1} \mid i <j \Big \} \cup \Big \{ {\circled{$5$}}_{j,1,j,1} \mid 2 \leq j \Big \}$ as a generating family for $\operatorname{Span}_{\mathbb{Z}}\Big \{ {\circled{$5$}}_{i,j,i,j} \mid i \neq j \} \Big \}$, and the basis $ (a_1 \wedge b_1 + a_i \wedge b_i)_{2 \leq i \leq g} $ for $\operatorname{Span}_{\mathbb{Z}_2} \{ a_i \wedge b_i + a_j \wedge b_j \mid i \neq j \}$. Then $S$ is generated by $ \Big \{ 2{\circled{$5$}}_{i,j,i,j} - 2{\circled{$5$}}_{j,1,j,1} \mid i <j \Big \} \cup \Big \{ 2{\circled{$5$}}_{j,1,j,1} \mid 2 \leq j \Big \} \cup \Big \{ {\circled{$5$}}_{i,j,i,j} - {\circled{$5$}}_{j,1,j,1} - {\circled{$5$}}_{i,1,i,1} \mid  i < j \Big \} \cup \Big \{ {\circled{$5$}}_{i,j,i,j} - {\circled{$5$}}_{j,1,j,1} - {\circled{$5$}}_{i,k,i,k}  + {\circled{$5$}}_{k,1,k,1} \mid  i < j < k\Big \}$, from which we deduce the simpler generating family \[ \Big \{ {\circled{5}}_{1,i,1,i} + {\circled{5}}_{i,j,i,j}  + {\circled{5}}_{j,1,j,1} \mid i \neq j \Big \} \cup \Big \{ 2{\circled{5}}_{i,j,i,j} \mid i \neq j \Big \}. \] Indeed, it is easy to get the elements of the family given by Lemma \ref{linalglemma} with the elements given right above. For example, for $ i < j$: \begin{align*}
{\circled{$5$}}_{i,j,i,j} - {\circled{$5$}}_{j,1,j,1} - {\circled{$5$}}_{i,1,i,1} &= ({\circled{5}}_{1,i,1,i} + {\circled{5}}_{i,j,i,j}  + {\circled{5}}_{j,1,j,1}) - 2{\circled{5}}_{1,i,1,i} - 2{\circled{5}}_{j,1,j,1},
\end{align*}

\noindent and for $ i < j <k$ :
\begin{align*}
{\circled{$5$}}_{i,j,i,j} - {\circled{$5$}}_{j,1,j,1} - {\circled{$5$}}_{i,k,i,k}  + {\circled{$5$}}_{k,1,k,1} &= ({\circled{5}}_{1,i,1,i} + {\circled{5}}_{i,j,i,j} + {\circled{5}}_{j,1,j,1} - 2{\circled{5}}_{j,1,j,1}) \\ &- ({\circled{5}}_{1,i,1,i} + {\circled{5}}_{i,k,i,k} + {\circled{5}}_{k,1,k,1} - 2{\circled{5}}_{k,1,k,1}).
\end{align*}

\end{proof}

Hence by Lemma \ref{lemmeS}, the element $V$ can be written as a linear combination of \[ {\circled{5}}_{1,i,1,i} + {\circled{5}}_{i,j,i,j} + {\circled{5}}_{j,1,j,1}~; \quad 2{\circled{5}}_{i,j,i,j}\] \noindent where $i \neq j$. We compute, for $i,j \neq 1$: \begin{align*}
\Bigg[ \ltritree{$a_1$}{$b_i$}{$a_{j}$} ,\ltritree{$a_i$}{$b_{j}$}{$b_1$} \Bigg] &=  \circled{5}_{1,i,1,i} + \circled{5}_{i,j,i,j} - \ltree{$a_j$}{$a_1$}{$b_j$}{$b_1$}\\
& = \circled{5}_{1,i,1,i} + \circled{5}_{i,j,i,j} -  \circled{5}_{j,1,j,1} +  \circled{5}_{j,j,1,1}.
\end{align*}

\noindent We know that $\circled{5}_{j,j,1,1}$ is realizable which shows that $\circled{5}_{1,i,1,i} + \circled{5}_{i,j,i,j} -  \circled{5}_{j,1,j,1}$ is also realizable. Similarly, the element $\circled{5}_{i,j,i,j} + \circled{5}_{j,1,j,1} - \circled{5}_{1,i,1,i}$ is realizable. By summing these two elements, we get that $ 2{\circled{5}}_{i,j,i,j}$ is also realizable. We deduce that both ${\circled{5}}_{1,i,1,i} + {\circled{5}}_{i,j,i,j} + {\circled{5}}_{j,1,j,1}$ and $2{\circled{5}}_{i,j,i,j}$ belong to $\tau_2(\AJ)$ for any $i \neq j$. Therefore $V$ is realizable.
We finally turn to $T_{\geq 3}$. We define the elements \[ {\circled{7}}_{i,j,k,l} := \ltree{$a_i$}{$a_j$}{$b_k$}{$a_l$} \quad {\circled{8}}_{i,j} := \frac{1}{2} \ltree{$a_i$}{$a_j$}{$a_i$}{$a_j$} \]
\noindent with $i\neq j$, then \begin{align*}
 \operatorname{Tr}^{as}({\circled{7}}_{i,j,k,l}) &= \delta_{ki}a_j \wedge a_l + \delta_{kj}a_i \wedge a_l \\
 \operatorname{Tr}^{as}({\circled{8}}_{i,j}) &= a_i \wedge a_j
\end{align*}

The fact that $T_{\geq 3}$ can be realized will follow from the same kind of computations as for~$T_2$. We define $P:=  \operatorname{Ker}( \operatorname{Tr^{as}} : \operatorname{Span}_\mathbb{Z} \{ \textit{type 3 and 4 elements} \} ~ \rightarrow (A \wedge A) \otimes \mathbb{Z}_2)$.

\begin{propo}
$P$ is generated by trees with 4 leaves colored by $A$, elements of type $\circled{$7$}$ with no contractions, elements of type ${\circled{$7$}}_{i,k,k,i}$ and elements \[ \circled{$7$}_{i,k,k,m} \pm \circled{$7$}_{m,k,k,i}~; \circled{$7$}_{i,k,k,l} \pm \circled{$7$}_{i,k',k',l}~; \circled{$7$}_{i,k,k,l} \pm \circled{$8$}_{i,l} \]
\noindent where $i$ must be different from $k,k'$ and $l$, and $m \neq k$.
\end{propo}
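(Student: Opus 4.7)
The plan is to apply Lemma~\ref{linalglemma} to the short exact sequence
\[
0 \longrightarrow P \longrightarrow \operatorname{Span}_\mathbb{Z}\{\text{type 3 and 4 elements}\} \xrightarrow{\operatorname{Tr}^{as}} \Lambda^2(A/2A) \longrightarrow 0,
\]
mimicking the argument of Proposition~\ref{defN}. One first checks that $\operatorname{Tr}^{as}$ does land in $\Lambda^2(A/2A)$: in a type~3 or type~4 element at most one leaf lies in $B$, so a non-trivial $\omega$-contraction must pair $a_k$ with $b_k$ and leaves the two remaining leaves colored by elements of $A$.

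I would fix a generating family for the domain consisting of the type~4 trees $\ltree{$a_i$}{$a_j$}{$a_k$}{$a_l$}$, the half-symmetric type~4 elements $\circled{8}_{i,j}$ (with $i\neq j$), and the type~3 trees $\circled{7}_{i,j,k,l}$. Every type~3 tree in which the $B$-leaf sits at position $a$, $b$ or $d$ rewrites as a $\mathbb{Z}$-linear combination of $\circled{7}$'s via $AS$ and $IHX$; for instance $\ltree{$a_i$}{$b_k$}{$a_j$}{$a_l$} = \circled{7}_{i,j,k,l} - \circled{7}_{i,l,k,j}$. Direct computation then yields $\operatorname{Tr}^{as}(\ltree{$a_i$}{$a_j$}{$a_k$}{$a_l$})=0$, $\operatorname{Tr}^{as}(\circled{8}_{i,j})=a_i\wedge a_j$ and $\operatorname{Tr}^{as}(\circled{7}_{i,j,k,l})=\delta_{ik}(a_j\wedge a_l)+\delta_{jk}(a_i\wedge a_l)$. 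In particular the type~4 trees, the $\circled{7}$'s with no contraction and the $\circled{7}_{i,k,k,i}$'s all lie in $P$, while the $\circled{8}_{i,j}$'s ensure surjectivity onto $\Lambda^2(A/2A)$.

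With the basis $\{a_i\wedge a_j\mid 1\leq i<j\leq g\}$ of $\Lambda^2(A/2A)$, Lemma~\ref{linalglemma} then produces three kinds of generators for $P$. The first kind are the generators already in $P$, which form the first three items of the claim. The second kind are differences of two generators with the same image in the basis; since the non-kernel generators mapping to $a_i\wedge a_l$ are exactly $\circled{8}_{i,l}$ and the $\circled{7}_{i,k,k,l}$ (for $k\neq i$), and since the mod-$2$ identity $a_m\wedge a_i=a_i\wedge a_m$ adjoins $\circled{7}_{m,k,k,i}$ to the same class, one recovers precisely the three families of differences in the statement. The third kind, twice any generator, reduces to what is already listed: $2\,\circled{8}_{i,l}$ equals the type~4 tree $\ltree{$a_i$}{$a_l$}{$a_i$}{$a_l$}$, and
\[
2\,\circled{7}_{i,k,k,l} = \bigl(\circled{7}_{i,k,k,l} + \circled{7}_{l,k,k,i}\bigr) + \bigl(\circled{7}_{i,k,k,l} - \circled{7}_{l,k,k,i}\bigr),
\]
which is exactly why both the $+$ and $-$ signs must appear in each listed difference.

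The most delicate part is the bookkeeping of the degenerate cases (coincidences such as $k=l$ or $k=i$, and adjacent equal labels inside a $\circled{7}$) to verify that the terse list of the proposition exhausts all the generators produced by Lemma~\ref{linalglemma}. The rest is entirely parallel to the treatment of $N$ in Proposition~\ref{defN} and of $K$ in~\eqref{defK}.
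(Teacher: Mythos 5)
Your proposal is correct and follows essentially the same route as the paper: apply Lemma \ref{linalglemma} to the short exact sequence $0 \to P \to \operatorname{Span}_\mathbb{Z}\{\text{type 3 and 4 elements}\} \to (A\wedge A)\otimes\mathbb{Z}_2 \to 0$ with the same generating family and basis, then observe that the doubled generators $2\,\circled{7}$ and $2\,\circled{8}$ are redundant (the paper disposes of them exactly as you do, via $2\,\circled{8}_{i,l}=\ltree{$a_i$}{$a_l$}{$a_i$}{$a_l$}$ and by combining the $\pm$ pairs of differences, also using $\circled{7}_{i,j,i,l}=-\circled{7}_{j,i,i,l}$ to normalize the position of the contraction). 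Your only loose end is the one you already flag: when $l=k$ the decomposition $2\,\circled{7}_{i,k,k,k}$ must use the $\circled{8}$-family of differences rather than the first one, which is routine bookkeeping.
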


\begin{proof}
It follows once again from Lemma \ref{linalglemma} applied to the short exact sequence \[ 
\begin{tikzcd}
 0 \arrow[r]& P \arrow[r] & \operatorname{Span}_\mathbb{Z} \{ \textit{type 3 and 4 elements} \}  \arrow[r, "\operatorname{Tr}^{as}"]& ((A \wedge A) \otimes \mathbb{Z}_2) \arrow[r] &0 
\end{tikzcd}. 
\]
Type 3 and 4 elements give a generating family for $\operatorname{Span}_\mathbb{Z} \{ \textit{type 3 and 4 elements} \}$ and $(a_i \wedge a_j)_{1 \leq i < j \leq g}$ a basis for $((A \wedge A) \otimes \mathbb{Z}_2)$. Note that, according to Lemma $\ref{linalglemma}$, in our family of generators we should have elements of type $2 \circled{$7$}_{i,j,k,l}$ and $2\circled{$8$}_{i,j}$  for any $i \neq j$. Nevertheless, these elements are not needed, because if there is no contraction we have the element $\circled{$7$}_{i,j,k,l}$ as a generator and if there is one contraction it is easy to obtain both $2 \circled{$7$}_{i,j,j,l}$ and $2 \circled{$7$}_{i,j,i,l} = -2 \circled{$7$}_{j,i,i,l}$ from the generators given in the proposition. This last argument also works for $2\circled{$8$}_{i,j}$.
\end{proof}

We now show that $P$ is contained in $\tau_2(\AJ)$. Elements of type 4 that are not expansion of half trees in $D_2(H)$ are not worth mentioning: they always have no contractions and are in $\tau_2([\mathcal{A}\cap J_1,\mathcal{A}\cap J_1])$. The same is true for elements of type $\circled{7}$ with no contractions. Once again we check some relations, making sure that any tree with three leaves appearing in the computations below has at least one leaf colored by $A$:

\begin{align*}
\circled{7}_{i,k,k,l} + \circled{7}_{i,k',k',l}& = \Bigg[\ltritree{$a_i$}{$a_k$}{$a_{k'}$} ,\ltritree{$a_l$}{$b_{k'}$}{$b_k$} \Bigg], \\
\circled{7}_{i,k,k,l} - \circled{7}_{l,k',k',i}& = \Bigg[-\ltritree{$a_i$}{$a_k$}{$b_{k'}$} ,\ltritree{$a_l$}{$a_{k'}$}{$b_k$} \Bigg], \\ \circled{7}_{i,k,k,m} - \circled{7}_{m,k,k,i}& = \ltree{$a_k$}{$b_{k}$}{$a_m$}{$a_i$} \\ &=  \Bigg[\ltritree{$a_k$}{$b_k$}{$a_{i}$} ,\ltritree{$a_i$}{$b_{i}$}{$a_m$} \Bigg],
\end{align*}
\begin{align*}
 2\circled{7}_{i,k,k,m} & = \Bigg[\ltritree{$a_i$}{$a_k$}{$a_{r}$} ,\ltritree{$a_m$}{$b_{r}$}{$b_k$} \Bigg] - \Bigg[\ltritree{$a_i$}{$a_k$}{$b_{r}$} ,\ltritree{$a_m$}{$a_{r}$}{$b_k$} \Bigg] \\ &- \Bigg[\ltritree{$a_r$}{$b_r$}{$a_{k}$} ,\ltritree{$a_i$}{$b_{k}$}{$a_m$} \Bigg], \\ \circled{$8$}_{i,l} \pm \circled{$7$}_{i,k,k,l} &= (\circled{$8$}_{i,l} \pm \circled{$7$}_{i,l,l,l}) \pm (\circled{$7$}_{i,k,k,l} \mp \circled{$7$}_{i,l,l,l}).
\end{align*}

We also consider the Dehn twist along the curve bounding the surface which is a neighborhood of $\alpha_l \cup (\alpha_i \sharp \beta_l^{\pm})$, where $(\alpha_i \sharp \beta_l^{\pm})$ is a connected sum of $\alpha_i$ and $\beta_l$ with orientation defined by the sign. This element is in $\AJ$, its image under $\tau_2$ is \[ \frac{1}{2} \ltree{$a_i \pm b_l$}{$a_l$}{$a_i \pm b_l$ }{$a_l$} = \circled{$8$}_{i,l} \pm \circled{$7$}_{i,l,l,l} + \frac{1}{2} \ltree{$b_l$}{$a_l$}{$b_l$ }{$a_l$}\]

\noindent which ultimately shows that $\circled{$8$}_{i,l} \pm \circled{$7$}_{i,l,l,l}$ belongs to $\tau_2(\AJ)$. Therefore, $\circled{$8$}_{i,l} \pm \circled{$7$}_{i,k,k,l}$ is also realizable. Finally, elements of type $\circled{7}_{i,k,k,i}$ can be realized in the following way. Notice that: \begin{align*}
\frac{1}{2} \ltree{$a_i + a_k$}{$b_k +a_i$}{$a_i + a_k$}{$b_k +a_i$} - \frac{1}{2} \ltree{$a_k$}{$b_k +a_i$}{$a_k$}{$b_k +a_i$} &= \frac{1}{2} \ltree{$a_i$}{$b_k +a_i$}{$a_i$}{$b_k +a_i$} + \ltree{$a_i$}{$b_k+a_i$}{$a_k$}{$b_k +a_i$} \\ &= \frac{1}{2} \ltree{$a_i$}{$b_k$}{$a_i$}{$b_k$} + \ltree{$a_i$}{$b_k$}{$a_k$}{$b_k +a_i$} \\ &= \circled{$6$}_{i,k} - \circled{$5$}_{i,k,k,k} + \circled{$7$}_{i,k,k,i}.
\end{align*}
Now, we have already shown that $\circled{$6$}_{i,k} - \circled{$5$}_{i,k,k,k} \in \tau_2(\AJ)$ (by equality \eqref{elem5}), because it can be written as $\circled{$6$}_{i,k} - \circled{$5$}_{k,k,k,i}$. So we just need to show that the left part of this equality is also in  $\tau_2(\AJ)$ to conclude that $\circled{$7$}_{i,k,k,i}$ is realizable. This comes once again from Lemma \ref{morformula} and the fact that the curves bounding $(\alpha_i \sharp \alpha_k) \cup (\beta_k \sharp\alpha_i)$ and $(\alpha_k) \cup (\beta_k \sharp \alpha_i)$ (where $(\alpha_i \sharp \alpha_k)$ and $(\beta_k \sharp\alpha_i)$ are connected sums of the curves involved) are bounding disks in the handlebody. All these computations imply that $P \subset \tau_2(\AJ)$, so that $T_{\geq 3} \in \tau_2(\AJ)$.

Consequently, we get that $T \in \tau_2(\AJ)$ which finishes the proof of Theorem \ref{lastthm}.

\begin{rem}
The computations in this section actually give generators for $\tau_2(\AJ)$, which we can write explicitly. Also, it can be noticed that we used only elements of $\AJ$ of the first and the third kind defined in Section \ref{section51}. This tells us something about the generation of $\AJ$, but only up to $J_3$. Naturally the following question arises: is $\AJ$ generated by elements of the first and the third kind in Section \ref{section51} ?
\label{finalremark}
\end{rem}

Theorem \ref{lastthm} allows us to recover the result shown by Pitsch in \cite{pit}, whose immediate corollary is that any homology 3-sphere is $J_3$-equivalent to $S^3$. We even get a slight improvement on the genus condition. With the definitions of $\mathcal{A}$, $\mathcal{B}$, and $\iota$ given in Section \ref{sec3}, we get the following result :

\begin{coroll}
For any $g \geq 4$, $\operatorname{Im}(\tau_2) = \tau_2(\AJ) + \tau_2(\mathcal{B} \cap J_2)$.
\end{coroll}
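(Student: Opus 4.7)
The strategy is to decompose every $T \in \operatorname{Im}(\tau_2)$ as a sum $T = T_A + T_B$ with $T_A \in \tau_2(\AJ)$ and $T_B \in \tau_2(\mathcal{B} \cap J_2)$, using the $\mathbb{Z}^2$-grading on $D_2(H)$ induced by the Lagrangian splitting $H = A \oplus B$. The containment $\tau_2(\AJ) + \tau_2(\mathcal{B} \cap J_2) \subset \operatorname{Im}(\tau_2)$ is immediate, so only the reverse containment requires work.

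First I would set up the $\iota$-symmetric analog of Theorem \ref{lastthm}. Since $\iota$ normalizes $J_2$ and $\mathcal{B} = \iota \mathcal{A} \iota^{-1}$, the group $\mathcal{B}$ is itself a handlebody group, namely that of the outer handlebody $W$ whose associated Lagrangian of $H$ is $B$ (the meridians of $W$ are the parallels of $V$). Applying Theorem \ref{lastthm} to $W$ in place of $V$ therefore yields
\[ \tau_2(\mathcal{B} \cap J_2) = \operatorname{Ker}(\operatorname{Tr}^B) \cap \operatorname{Im}(\tau_2), \]
where $\operatorname{Tr}^B$ is defined exactly like $\operatorname{Tr}^A$ but with $B$ replacing $A$ throughout. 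Next, I would exploit the bigrading on $D_2(H)$: each basis generator (tree or half-symmetric tree from Proposition \ref{prop2.1}) has a well-defined number $k \in \{0,1,2,3,4\}$ of leaves colored by basis vectors of $A$. All relations in Proposition \ref{prop2.1} are $\mathbb{Z}$-linear and homogeneous in this bigrading, so every $T \in D_2(H)$ decomposes uniquely and integrally as $T = T_{(0)} + T_{(1)} + T_{(2)} + T_{(3)} + T_{(4)}$ with $T_{(k)} \in D_2(H)$. I would then set
\[ T_A := T_{(2)} + T_{(3)} + T_{(4)}, \qquad T_B := T_{(0)} + T_{(1)}. \]
Every summand of $T_A$ has at least two leaves in $A$, hence $T_A \in \mathcal{F}_1$ (for the filtration of Section \ref{sec4}), so $\operatorname{Tr}^A(T_A) = 0$; by the analogous argument, $\operatorname{Tr}^B(T_B) = 0$.

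Finally, I would verify that $T_A, T_B \in \operatorname{Im}(\tau_2) = \operatorname{Ker}(\operatorname{Tr}^{as})$. A direct inspection of the formulas in Theorem \ref{theoremtraces} shows that $\operatorname{Tr}^{as}$ respects the bigrading: in the decomposition
\[ \Lambda^2(H/2H) = \Lambda^2(A/2A) \oplus \bigl((A/2A) \otimes (B/2B)\bigr) \oplus \Lambda^2(B/2B),\]
the image $\operatorname{Tr}^{as}(T_{(k)})$ lies in $\Lambda^2(B/2B)$ for $k = 0, 1$, in the mixed summand for $k = 2$, and in $\Lambda^2(A/2A)$ for $k = 3, 4$. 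Since these three summands are in direct sum and $\operatorname{Tr}^{as}(T) = 0$, each component vanishes separately, yielding $\operatorname{Tr}^{as}(T_A) = \operatorname{Tr}^{as}(T_{(2)}) + \operatorname{Tr}^{as}(T_{(3)} + T_{(4)}) = 0$ and likewise $\operatorname{Tr}^{as}(T_B) = 0$. Combining with the previous paragraph and applying Theorem \ref{lastthm} and its $\iota$-symmetric version, we obtain $T_A \in \tau_2(\AJ)$ and $T_B \in \tau_2(\mathcal{B} \cap J_2)$, and hence $T = T_A + T_B$ decomposes as claimed. The main point to handle carefully is that the bigrading is defined at the integer level on $D_2(H)$ (not only after $\otimes \mathbb{Q}$); this will follow from the observation that the relations $2(a \odot b) = \ltree{$a$}{$b$}{$a$}{$b$}$ and $(a+b) \odot c = a \odot c + b \odot c + \ltree{$a$}{$c$}{$b$}{$c$}$ are compatible with the $\mathbb{Z}$-linear splitting $H = A \oplus B$, and in particular preserve integrality after projection onto each bigraded summand.
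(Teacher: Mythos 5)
Your proof is correct and follows essentially the same route as the paper: decompose $T$ according to the number of $A$-colored leaves, use the direct-sum decomposition of $\Lambda^2(H/2H)$ to see that each piece stays in $\operatorname{Ker}(\operatorname{Tr}^{as})$, observe that the piece with at least two $A$-leaves lies in $\mathcal{F}_1 \subset \operatorname{Ker}(\operatorname{Tr}^A)$, and handle the remaining piece via the $\iota$-conjugate (equivalently, the $B$-version) of Theorem \ref{lastthm}. The only presentational difference is that the paper transports $T_{(0)}+T_{(1)}$ by $\iota_*$ and conjugates the realizing mapping class rather than stating a $\operatorname{Tr}^B$-version of the theorem, and your extra care about the integrality of the leaf-count grading is a point the paper leaves implicit.
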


\begin{proof}
Any element $T$ in the image of $\tau_2$ can be written as (an expansion of) a linear combination $T_1$ of trees with $0$ or $1$ leaf colored by $A$ and a linear combination $T_2$ of trees with $2,3$ or $4$ leaves colored by $A$ (here, the term ``tree" includes halfs of symmetric trees as well). Then it is clear that $\operatorname{Tr}^{as}(T_1) \in B \wedge B$, whereas $\operatorname{Tr}^{as}(T_2) \in A \wedge H$. The spaces $B \wedge B$ and $A \wedge H$ having trivial intersection in $\Lambda^2 H$, both $T_1$ and $T_2$ lie in the kernel of $\operatorname{Tr}^{as}$. The term $T_2$, by definition, also lies in the kernel of $\operatorname{Tr}^A$. We also know (see Section \ref{sec6}) that $\iota$ acts on $H$ as the map sending $a_i$ to $(-b_i)$ and $b_i$ to $a_i$ for all $i$'s, and that $\mathcal{B}= \iota \mathcal{A} \iota^{-1}$. Now $\iota_*(T_1)$ lie in the kernel of $\operatorname{Tr}^A$. By Theorem \ref{theoremtrA}, we know that there are two mapping classes $\psi_1$ and $\psi_2$ in $\AJ$ such that $T= T_1 + T_2 = \tau_2(\iota\psi_1 \iota^{-1}) + \tau_2(\psi_2)$, which finishes the proof.
\end{proof}

\begin{rem}
In this proof, we used only the fact that $\mathcal{F}_1 \cap \tau_2(J_2) \subset \tau_2(\AJ)$ which is stricly weaker than the equality $\tau_2(\AJ) = \operatorname{Ker}(\operatorname{Tr}^A) \cap \tau_2(J_2)$ from Theorem \ref{lastthm}. In this sense, the computation in this section is more precise than the one from \cite{pit}.
\end{rem}

\section{Computing $\tau_2(\mathcal{G} \cap J_2)$}
\label{sec6}
Like in Section \ref{sec3}, we choose a system of meridians and parallels in the boundary of $V_g$, and we identify $S^3$ to ${V_g} {\cup}_{\iota_g} (-V_g)$. This gives the Heegaard splitting of genus $g$ of the 3-sphere, and we consider the subgroup $\mathcal{B}= \iota \mathcal{A} \iota^{-1}$  of $\mathcal{M}$. We thus have a family of curves $(\alpha_i)_{1 \leq i \leq g}$ with homology classes $(a_i)_{1 \leq i \leq g}$ as in the previous sections, but also a set of curves $(\beta_i)_{1 \leq i \leq g}$ with homology classes $(b_i)_{1 \leq i \leq g}$, defining a Lagrangian $B \subset H$. The map $\iota$ can be defined by its action on $\pi$. We lift the curves $\alpha_i$ and $\beta_i$ to elements of $\pi$ as described in Figure \ref{fig5}, and we set \[\begin{aligned}
{\iota}_*: \pi & \longrightarrow \pi \\
\alpha_{i} & \longmapsto \beta_{i}^{-1} \\
\beta_{i} & \longmapsto \beta_{i} \alpha_{i} \beta_{i}^{-1}.
\end{aligned}\] Indeed by the Dehn-Nielsen theorem, as $\iota_*$ fixes the element $\xi := \prod_{i=1}^g [\beta_{i}^{-1},\alpha_{i}]$ defined by $-\partial \Sigma$ in $\pi$ ($\xi$ is described in Figure \ref{fig5}), the map $\iota$ realizing this action is well-defined.
\begin{figure}[h]
	\centering
	\includegraphics[scale= 1]{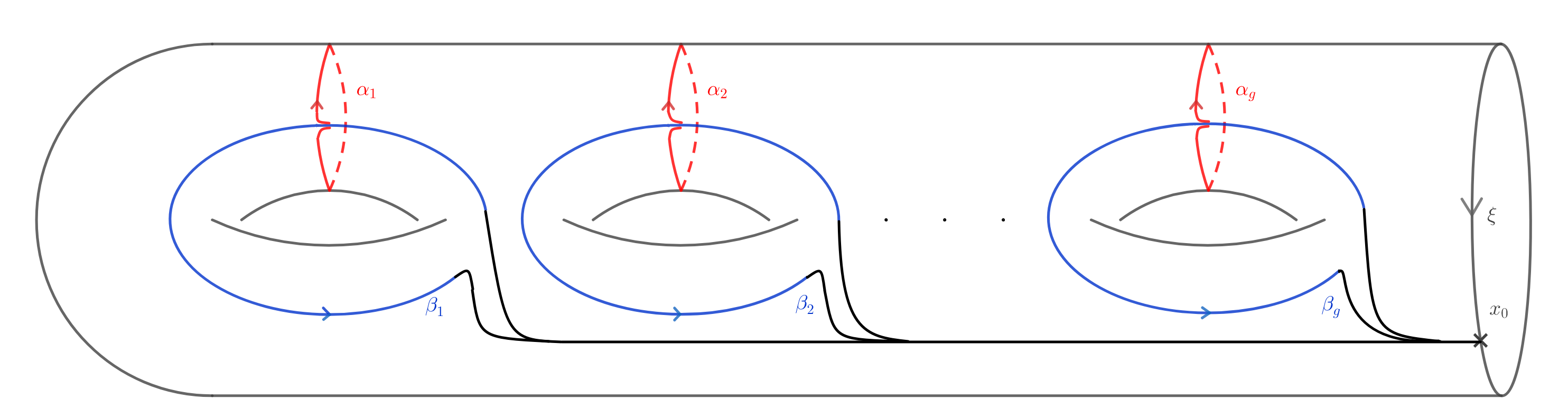}
	\caption{The based curves $(\alpha_i)_{1 \leq i \leq g}$ and $(\beta_i)_{1 \leq i \leq g}$}
	\label{fig5}
\end{figure}

The \textit{Goeritz group} of $S^3$ is the group of isotopy classes of orientation-preserving homeomorphisms of $S^3$ preserving this Heegaard splitting (and fixing the disk). We denote it by $\mathcal{G}:=\mathcal{G}_{g,1}$. Observe that $\mathcal{G}$ coincides with $\mathcal{A} \cap \mathcal{B}$. The Johnson filtration restricts to a separating filtration on $\mathcal{G}$. In this section, we compute $\tau_1(\mathcal{G}\cap J_1)$ and $\tau_2(\mathcal{G}\cap J_2)$ using, respectively, a refinement of the computations made by Morita in \cite{mor} and the computations and results in Section \ref{sec5}. Notice that $\iota$ acts on $\mathcal{G}$ by conjugation, and on $H$ by sending $a_i$ to $-b_i$ and $b_i$ to $a_i$ for all $i$. We also need the following from \cite[Section~3]{suz}.

\begin{lemma}
The image of $\mathcal{G}$ in $\operatorname{Sp}({2g,\mathbb{Z}})$ coincides with \[\left\{\left(\begin{array}{cc}
P & 0 \\
0 & \left(P^{T}\right)^{-1}
\end{array}\right) \bigg \lvert \quad P \in \mathrm{GL}(g, \mathbb{Z})\right\}\]  and, so, is canonically isomorphic to $\operatorname{GL}(g, \mathbb{Z})$.
\label{tau0goeritz}
\end{lemma}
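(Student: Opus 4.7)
The plan is to exploit the equality $\mathcal{G}=\mathcal{A}\cap\mathcal{B}$ together with the description of the image of $\mathcal{A}$ in $\operatorname{Sp}(2g,\mathbb{Z})$ already recalled in Section~\ref{sec4} from \cite[Thm.~7.1]{hensel}, namely the upper block triangular matrices $\begin{pmatrix} A & B \\ 0 & D \end{pmatrix}$ with $A^T D=Id$ and $D^T B$ symmetric.

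First I would compute the image of $\mathcal{B}=\iota\mathcal{A}\iota^{-1}$ by conjugating the image of $\mathcal{A}$ by the action $\iota_{*}$ on $H$. Since $\iota$ sends $a_i$ to $-b_i$ and $b_i$ to $a_i$, in the basis $(a_1,\dots,a_g,b_1,\dots,b_g)$ it is represented by $\begin{pmatrix} 0 & Id \\ -Id & 0 \end{pmatrix}$. A direct matrix calculation then shows that the image of $\mathcal{B}$ consists exactly of the lower block triangular matrices $\begin{pmatrix} D & 0 \\ -B & A \end{pmatrix}$ with $A^T D = Id$ and $D^T B$ symmetric. Intersecting this with the upper block triangular image of $\mathcal{A}$ forces both off-diagonal blocks to vanish, leaving precisely the block diagonal matrices $\begin{pmatrix} P & 0 \\ 0 & (P^T)^{-1} \end{pmatrix}$ with $P\in\operatorname{GL}(g,\mathbb{Z})$. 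This establishes one inclusion.

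The more delicate direction is surjectivity: every such $P$ must be realized by one and the same element of $\mathcal{G}$, not merely by distinct elements of $\mathcal{A}$ and $\mathcal{B}$. The plan is to use a standard generating set of $\operatorname{GL}(g,\mathbb{Z})$ — the elementary matrices $Id+E_{ij}$, the permutation matrices, and the diagonal sign change $\operatorname{diag}(-1,1,\dots,1)$ — and to exhibit each generator as the class of a self-homeomorphism of $S^3$ that simultaneously preserves both handlebodies $V_g$ and $-V_g$ of the standard Heegaard splitting. Classical handle slides realize the elementary matrices, an ambient transposition of two handles realizes the corresponding permutation, and a half-twist on a single handle pair realizes the sign change. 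Each of these visibly lies in $\mathcal{A}\cap\mathcal{B}=\mathcal{G}$ and acts as the announced block diagonal matrix on $H$.

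The main obstacle would be making the isotopies of $S^3$ explicit enough to verify cleanly both the preservation of the Heegaard splitting and the induced action on homology; this is precisely the content of \cite[Section~3]{suz}, which is why the paper invokes Suzuki's computation rather than reproving it.
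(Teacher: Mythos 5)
Your proposal is correct, and it is worth noting that the paper itself gives no argument for this lemma beyond the citation of \cite[Section~3]{suz}, so there is nothing internal to compare against line by line. Your first half is a clean, self-contained computation: conjugating the image of $\mathcal{A}$ (the matrices $\left(\begin{smallmatrix} A & B \\ 0 & D\end{smallmatrix}\right)$ with $A^TD = Id$ and $D^TB$ symmetric, as recalled in Section~\ref{sec4} from \cite[Theorem 7.1]{hensel}) by $\iota_* = \left(\begin{smallmatrix} 0 & Id \\ -Id & 0\end{smallmatrix}\right)$ does give $\left(\begin{smallmatrix} D & 0 \\ -B & A\end{smallmatrix}\right)$, and intersecting kills the off-diagonal blocks, leaving $\left(\begin{smallmatrix} P & 0 \\ 0 & (P^T)^{-1}\end{smallmatrix}\right)$. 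You are also right to flag the genuine subtlety, namely that $\operatorname{Im}(\mathcal{A}\cap\mathcal{B})\subseteq \operatorname{Im}(\mathcal{A})\cap\operatorname{Im}(\mathcal{B})$ is only an inclusion a priori, so the containment argument alone does not finish the job. Your second half --- realizing the elementary matrices, permutations, and the sign change by handle slides, handle exchanges, and handle flips of the standardly embedded handlebody in $S^3$, each of which extends to an ambient homeomorphism preserving both sides of the splitting --- is exactly the content of Suzuki's computation that the paper outsources, and it does generate $\operatorname{GL}(g,\mathbb{Z})$. The only points left implicit are routine: arranging the supports of these homeomorphisms away from the marked disk so that they live in $\mathcal{G}_{g,1}$ rather than $\mathcal{G}_g$, and checking orientations. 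Neither is a gap in the strategy.
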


Thus, for all $k$, $\tau_k(\mathcal{G}\cap J_k)$ is a $\mathrm{GL}(g, \mathbb{Z})$-module.

\begin{propo} 
For $g \geq 2$, we have $\tau_1(\mathcal{G}\cap J_1) = A \wedge B \wedge H$.
\label{tau1goeritz}
\end{propo}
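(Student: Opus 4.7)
The plan is to prove both inclusions separately.

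For the inclusion $\tau_1(\mathcal{G}\cap J_1)\subseteq A\wedge B\wedge H$, I would use the observation $\mathcal{G}=\mathcal{A}\cap\mathcal{B}$ stated at the beginning of this section, giving
\[ \tau_1(\mathcal{G}\cap J_1)\subseteq\tau_1(\mathcal{A}\cap J_1)\cap\tau_1(\mathcal{B}\cap J_1). \]
Proposition \ref{morprop} applied to both handlebodies, combined with the identifications $D_1(H)\simeq\Lambda^3 H$, $H_1(V)=H'=B$, and $H_1(W)=A$, gives respectively $\tau_1(\mathcal{A}\cap J_1)=A\wedge\Lambda^2 H$ and $\tau_1(\mathcal{B}\cap J_1)=B\wedge\Lambda^2 H$. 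Decomposing along $H=A\oplus B$,
\[ \Lambda^3 H = \Lambda^3 A\oplus(\Lambda^2 A\wedge B)\oplus(A\wedge\Lambda^2 B)\oplus\Lambda^3 B, \]
the intersection of the two submodules reads off as $(\Lambda^2 A\wedge B)\oplus(A\wedge\Lambda^2 B)=A\wedge B\wedge H$.

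For the reverse inclusion, I produce explicit elements of $\mathcal{G}\cap J_1$ realising a generating set of $A\wedge B\wedge H$. Since $\mathcal{G}$ surjects onto $\mathrm{GL}(g,\mathbb{Z})\subseteq\mathrm{Sp}(H)$ and $\tau_1$ is $\mathrm{Sp}(H)$-equivariant, the restriction $\tau_1\colon\mathcal{G}\cap J_1\to A\wedge B\wedge H$ is $\mathrm{GL}(g,\mathbb{Z})$-equivariant, so it suffices to hit a $\mathrm{GL}(g,\mathbb{Z})$-generating set. The candidates are bounding-pair maps $T_cT_d^{-1}$ whose defining curves $(c,d)$ cobound annuli in both $V$ and $W$ (so that $T_cT_d^{-1}\in\mathcal{A}\cap\mathcal{B}=\mathcal{G}$). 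Johnson's formula for BP-maps gives
\[ \tau_1(T_c T_d^{-1}) = [c]\wedge\omega_F\in\Lambda^3 H, \]
where $\omega_F=\sum_{i=1}^{h} u_i\wedge v_i$ is the bivector of a symplectic basis of the cobounded subsurface $F$ of genus $h$. Taking $[c]=a_i$ and $F$ of genus $1$ with symplectic basis $(a_j,b_j)$ for some $j\neq i$ produces the generator $a_i\wedge a_j\wedge b_j$ of $\Lambda^2 A\wedge B$; interchanging the roles of meridians and parallels yields $b_k\wedge a_l\wedge b_l\in A\wedge\Lambda^2 B$. The $\mathrm{GL}(g,\mathbb{Z})$-orbits of these two classes generate the two summands respectively, and hence all of $A\wedge B\wedge H$.

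The main obstacle is the geometric realisation of such BP-pairs: the condition that $c$ and $d$ cobound annuli in both handlebodies is restrictive (equivalently, $c$ and $d$ must represent the same conjugacy class in both $\pi_1(V)$ and $\pi_1(W)$). A natural realisation is to place the pair $(c,d)$ in a standard genus-$1$ handle region of $\Sigma$ whose position is compatible with the Heegaard involution $\iota$, arranging the annulus condition on both sides simultaneously by symmetry; verifying this concretely and checking that the two spanning classes above are indeed attained requires only a short local picture.
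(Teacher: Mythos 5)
Your first inclusion is correct and is essentially the paper's: the paper phrases it as saying that an element of $\tau_1(\mathcal{G}\cap J_1)$ must vanish after reducing modulo $A$ and modulo $B$, which is exactly what you get by applying Proposition \ref{morprop} to each handlebody, and the decomposition $\Lambda^3 H=\Lambda^3 A\oplus(\Lambda^2 A\wedge B)\oplus(A\wedge\Lambda^2 B)\oplus\Lambda^3 B$ then identifies the intersection with $A\wedge B\wedge H$. Your reduction of the reverse inclusion to hitting a $\mathrm{GL}(g,\mathbb{Z})$-generating set (plus the symmetry exchanging $A$ and $B$, i.e.\ conjugation by $\iota$) is also the paper's strategy; note only that one needs the $\mathbb{Z}$-span of the orbit, not the orbit itself, to reach elements such as $a_1\wedge a_2\wedge b_3$.

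The gap is the realisation step, which is the geometric heart of the proposition and which you explicitly leave open: you never exhibit a single pair $(c,d)$, and the condition you impose --- that $c$ and $d$ cobound annuli in \emph{both} handlebodies --- is stronger than necessary and is not what the natural candidates satisfy. The paper's element $\psi$ realising $a_1\wedge b_1\wedge b_2$ is the difference of twists along curves representing $[\alpha_2,\beta_2^{-1}][\alpha_1,\beta_1^{-1}]\beta_2$ and $\beta_2$ in $\pi$: these become equal in $\pi_1(V)$ (so they cobound an annulus in the inner handlebody), while each dies in $\pi_1(W)$ (so each twist separately extends over the outer handlebody); membership in $\mathcal{G}=\mathcal{A}\cap\mathcal{B}$ is obtained for two different reasons on the two sides. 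For your generator $a_i\wedge a_j\wedge b_j$ the analogous choice is $c=\alpha_i$ and $d$ representing $[\alpha_j,\beta_j^{-1}]\alpha_i$: both bound disks in $V$, and they become equal in $\pi_1(W)$, but to promote the latter to a properly embedded annulus in $W$ you still need the (true but nontrivial) fact that disjoint simple closed curves on the boundary of a handlebody which are freely homotopic inside it cobound an embedded annulus there. Without some such concrete verification --- explicit curves, their images in $\pi_1(V)$ and $\pi_1(W)$, and the resulting membership in $\mathcal{G}$ --- the proof is incomplete; this verification is precisely the content of the paper's Figure \ref{fig6} and the surrounding discussion.
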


\begin{proof}
We identify once again elements of $\Lambda^3 H$ to trees with three leaves. Any element in $\tau_1(\mathcal{G}\cap J_1)$ must vanish when we reduce its leaves in $H/A$ or $H/B$. Hence it can be written as a linear combination of trees whose leaves are never colored solely by $A$ or by $B$. Now, one can check that any tree in $A \wedge B \wedge H$ colored by elements in $ \{ a_i, b_i \mid 1 \leq i \leq g \}$ is in the $\mathbb{Z}$-module generated by the orbit of $T := \ltritree{$a_1$}{$b_1$}{$b_2$}$ under the actions of $\iota$ and $\mathrm{GL}(g, \mathbb{Z})$. Indeed, if such a tree has 2 leaves colored by $A$, the action of $\iota$ allows us to have a tree in the same orbit but with two leaves colored by $B$. Now, such a tree is always in the orbit of $T$ or $T' := \ltritree{$a_1$}{$b_2$}{$b_3$}$ under the action of $\mathrm{GL}(g, \mathbb{Z})$ (just by renumbering). But $T'$ is also in the $\mathbb{Z}$-module generated by the orbit of $T$, as one can write $T' = \ltritree{$a_1$}{$\quad b_1+b_2$}{$b_3$}-T$. Hence, it is sufficient to show that this particular tree is in $\tau_1(\mathcal{G}\cap J_1)$. Actually, if $\psi$ denotes the composition of a right Dehn twist along a simple closed curve corresponding to $[\alpha_2,\beta_2^{-1}][\alpha_1,\beta_1^{-1}]\beta_2 \in \pi$ with the left Dehn twist along a simple closed curve corresponding to $\beta_2 \in \pi$ (as described in Figure \ref{fig6} and in Fig. 3a in \cite{mor}), then $\tau_1(\psi) = T$. The map $\psi$ is an annulus twist in the inner handlebody, and the composition of two Dehn twists along curves bounding disks in the outer handlebody. Hence, we have $\psi \in \mathcal{A} \cap \mathcal{B} = \mathcal{G}$.
\end{proof}
\begin{figure}[h]
	\centering
	\includegraphics[scale= 1]{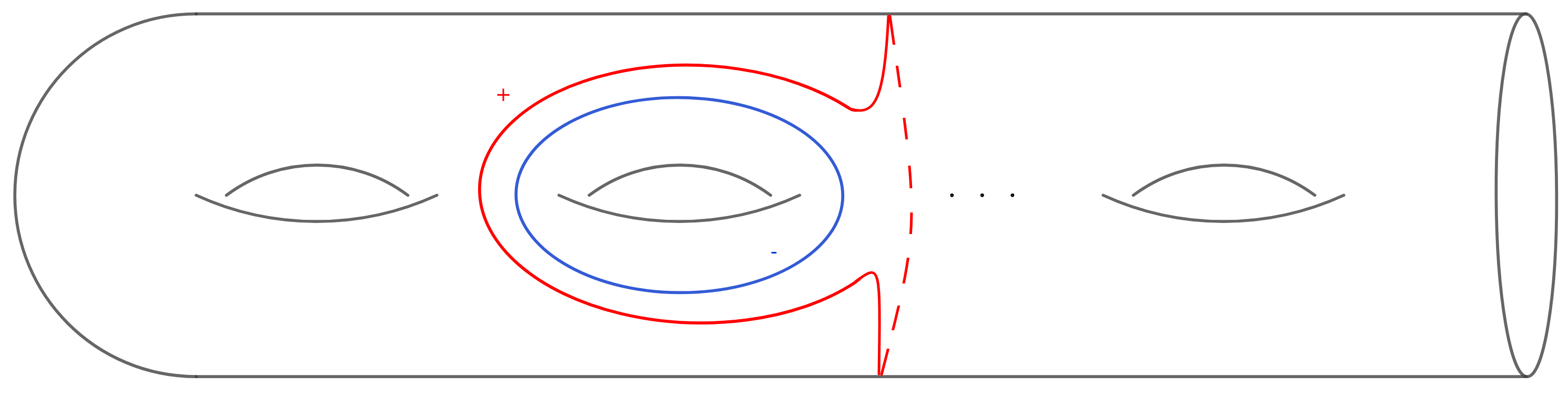}
	\caption{The two curves defining $\psi$}
	\label{fig6}
\end{figure}

Even though $\operatorname{Tr}^A$ and $\operatorname{Tr}^B$ are not defined over the same subspaces of $D_2(H)$, their kernels are both included in $D_2(H)$. Hence the following makes sense
\begin{propo}
For $g \geq 4$, we have $\tau_2(\mathcal{G} \cap J_2) =  \operatorname{Ker}( \operatorname{Tr}^{as}) \cap \operatorname{Ker}( \operatorname{Tr}^A) \cap \operatorname{Ker}( \operatorname{Tr}^B)$.
\label{tau2goeritz}
\end{propo}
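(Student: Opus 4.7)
The plan is to prove the two inclusions separately. The inclusion $\tau_2(\mathcal{G}\cap J_2) \subset \operatorname{Ker}(\operatorname{Tr}^{as}) \cap \operatorname{Ker}(\operatorname{Tr}^A) \cap \operatorname{Ker}(\operatorname{Tr}^B)$ comes for free from $\mathcal{G}\cap J_2 \subset J_2 \cap (\mathcal{A}\cap J_2)\cap (\mathcal{B}\cap J_2)$, combined with Theorem \ref{theoremtraces}, Theorem \ref{theoremtrA}, and the $\iota$-symmetric version of Theorem \ref{theoremtrA} (giving $\tau_2(\mathcal{B}\cap J_2)\subset \operatorname{Ker}(\operatorname{Tr}^B)$). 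The substance is the converse.

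For the converse, given $T$ in the right-hand side, I will decompose $T$ by its \emph{type}, i.e.\ the number of its leaves colored by an element of the $A$-basis $(a_i)$: the relations $AS$, $IHX$ and the half-symmetric tree relations of Proposition \ref{prop2.1} all preserve this integer in $\{0,1,2,3,4\}$, yielding a splitting $T = T_0+T_1+T_2+T_3+T_4$ in $D_2(H)$. The condition that $T$ belongs to the domain $\mathcal{F}_0$ of $\operatorname{Tr}^A$ forces $T_0 = 0$, and symmetrically $T$ belonging to the domain of $\operatorname{Tr}^B$ forces $T_4 = 0$; moreover $\operatorname{Tr}^A$ depends only on $T_1$ and $\operatorname{Tr}^B$ only on $T_3$ (higher-type pieces lie in $\mathcal{F}_1 \subset \operatorname{Ker}(\operatorname{Tr}^A)$ and in its $\iota$-analogue). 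Finally, $\operatorname{Tr}^{as}$ sends type-$1$, $2$, $3$ trees respectively to the direct summands $B\wedge B$, $A\wedge B$, $A\wedge A$ of $\Lambda^2(H/2H)$, so $\operatorname{Tr}^{as}(T)=0$ decouples as $\operatorname{Tr}^{as}(T_k)=0$ for $k=1,2,3$. It therefore suffices to realize $T_1$, $T_2$, $T_3$ individually inside $\tau_2(\mathcal{G}\cap J_2)$.

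For $T_1$, I will replay the description of $N = \operatorname{Ker}(\operatorname{Tr}^A) \cap \operatorname{Span}_{\mathbb{Z}}\{\text{type } 1\}$ from the proof of Theorem \ref{lastthm}: every generator is either a $\circled{1}$- or $\circled{3}$-type tree, or was realized there as a bracket of two degree-one trees each carrying leaves in $A$ \emph{and} in $B$ (precisely the content of Remark \ref{remcoloration}). By Proposition \ref{tau1goeritz}, such degree-$1$ trees lie in $\tau_1(\mathcal{G}\cap J_1) = A\wedge B\wedge H$, so the corresponding brackets lift to commutators in $[\mathcal{G}\cap J_1,\mathcal{G}\cap J_1] \subset \mathcal{G}\cap J_2$. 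I will handle $T_3$ by $\iota$-symmetry: $\iota$-conjugation swaps $A$ and $B$, takes type-$3$ to type-$1$, and preserves $\mathcal{G}$ setwise, so applying the previous argument to $\iota_*(T_3)$ produces a commutator realization of $T_3$ still inside $\mathcal{G}\cap J_2$ (because the set $A\wedge B\wedge H$ is $\iota$-invariant). For $T_2$, I will follow the case analysis of Theorem \ref{lastthm}: the Dehn twists $T_{\gamma_i}$ and $T_{\gamma_{i,j}}$ of Figure \ref{surfcurves}, as well as the BSCC-twists along boundaries of neighborhoods of curve configurations like $(\alpha_j\sharp\alpha_l^{\pm})\cup\beta_j$ used there, all bound subsurfaces admitting symplectic bases made of $\alpha$-loops (dying in $\pi_1(V)$) and $\beta$-loops (dying in $\pi_1(W)$), so their boundaries are null-homotopic on both sides and the corresponding Dehn twists sit in $\mathcal{G}\cap J_2$. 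The auxiliary commutator realizations in the type-$2$ analysis again involve only trees in $A\wedge B\wedge H$.

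The main obstacle I anticipate is that the specific realizations of type-$3$ elements ($\circled{7}$ and $\circled{8}$) in the proof of Theorem \ref{lastthm} use Dehn twists along BSCCs (like the boundary of a neighborhood of $\alpha_l\cup(\alpha_i\sharp\beta_l^{\pm})$) and brackets of trees whose leaves lie entirely in $A$; neither of these lifts to $\mathcal{G}$, since those BSCCs bound in $V$ but not in $W$, and all-$A$ trees sit in $\tau_1(\mathcal{A}\cap J_1)$ but not in $\tau_1(\mathcal{G}\cap J_1)$. The way around this is to avoid reproducing that part of the argument literally and instead transport the $T_1$-analysis through $\iota$-conjugation, exploiting the $\iota$-invariance of $A\wedge B\wedge H$. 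The hypothesis $g\ge 4$ inherited from Theorem \ref{lastthm} leaves enough free indices to carry out all requisite commutator manipulations inside $A\wedge B\wedge H$.
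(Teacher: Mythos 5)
Your proof is correct and follows essentially the same route as the paper's: decompose $T$ by the number of $A$-colored leaves (so the extreme types vanish and the three traces decouple), realize the type-$1$ part via $N$, Remark \ref{remcoloration} and Proposition \ref{tau1goeritz}, transport the type-$3$ part by $\iota$-conjugation, and rerun the type-$2$ analysis of Theorem \ref{lastthm} inside $\mathcal{G}$ using that the relevant BSCCs bound disks in both handlebodies and that the auxiliary commutators only involve trees in $A\wedge B\wedge H$. The single minor divergence is that where you argue directly that the curve bounding a neighborhood of $(\alpha_j\sharp\alpha_l^{\pm})\cup\beta_j$ is null-homotopic on both sides, the paper instead reduces to the element $\circled{6}_{l,j}-\circled{5}_{j,j,j,l}$ and obtains it by combining a $\mathrm{GL}(g,\mathbb{Z})$-action with $\iota$-conjugation; both arguments are valid.
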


\begin{proof}
The inclusion from the left to the right is a direct consequence of Theorems \ref{theoremtraces} and \ref{lastthm}. For the other inclusion, let us take an element $T \in \operatorname{Ker}( \operatorname{Tr}^{as}) \cap \operatorname{Ker}( \operatorname{Tr}^A) \cap \operatorname{Ker}( \operatorname{Tr}^B)$. We say, for $0 \leq k,l \leq 4$, that a tree has \textit{type} $(k,l)$ if it has $k$ leaves colored by $A$ and $l$ leaves colored by $B$. The tree $T$ is then a linear combination of trees of type $(1,3)$, $(2,2)$ and $(3,1)$. This is due to the fact that $T$ must be in the kernel of the projections $D_2(H) \rightarrow D_2(H/A) $ and $D_2(H) \rightarrow D_2(H/B)$. Hence we decompose $T$ into 3 elements: $T = T_1 + T_2 + T_3$, such that $T_i$ is a linear combination of elements of type $(i,4-i)$, for $i=1,2,3$. The images by $ \operatorname{Tr}^{as}$ of these 3 elements take place in separate summands in $\Lambda^2(H/2H)$. Also, by definition, $\operatorname{Tr}^{A}$ (resp.  $\operatorname{Tr}^{B}$) vanishes on the element of types $(2,2)$ and $(3,1)$ (resp. $(2,2)$ and $(1,3)$). We thus have, for $i=1,2,3$, $T_i \in \operatorname{Ker}( \operatorname{Tr}^{as}) \cap \operatorname{Ker}( \operatorname{Tr}^A) \cap \operatorname{Ker}( \operatorname{Tr}^B)$. We thus treat these 3 elements separately.

The element $T_1$ belongs to the space $N$ defined in Proposition \ref{defN}. By Remark \ref{remcoloration}, any element in $N$ can be realized as a linear combination of commutators of trees with three leaves with always at least one leaf in $A$ and one leaf in $B$. By Proposition \ref{tau1goeritz}, this implies that $N \subset \tau_2(\mathcal{G}\cap J_2)$. Indeed, we have $[\mathcal{G} \cap J_1,\mathcal{G} \cap J_1] \subset \mathcal{G} \cap J_2$.

The element $T_3$ is such that $\iota_* (T_3)$ is of type $(1,3)$, hence is in $N$. We deduce that $\iota_* (T_3)$, and consequently $T_3$, also belong to $\tau_2(\mathcal{G}\cap J_2)$.

The element $T_2$ is exactly of the same type as its homonym in Section \ref{sec5}. We want to modify slightly the argument in order to show that it is also in $\tau_2(\mathcal{G}\cap J_2)$. The elements $[\alpha_1,\beta_1^{-1}]$ and $[\alpha_2,\beta_2^{-1}][\alpha_1,\beta_1^{-1}]$ in $\pi$ (where the curves have been lifted to elements of $\pi$ as in Figure \ref{fig5}) define two simple closed curves bounding disks both in the inner and the outer handlebody. Then the Dehn twists along these two curves are maps in the Goeritz group, but also in $J_2$. This gives, respectively, that ${\circled{6}}_{1,1}$ and ${\circled{6}}_{1,1} -{\circled{5}}_{1,1,2,2} + {\circled{6}}_{2,2}$ are in $\tau_2(\mathcal{G}\cap J_2)$. Using the action of $\mathrm{GL}(g, \mathbb{Z})$ (by sending $1$ on $i$ and $2$ on $j$), we deduce that for all $ 1 \leq i \neq j \leq g$,  we have ${\circled{6}}_{i,i}$, ${\circled{5}}_{i,i,j,j} \in \tau_2(\mathcal{G}\cap J_2)$. For the trees of type $\circled{5}_{i,j,k,l}$ with no contraction discussed page 24, we can simply write \[\ltree{$a_i$}{$b_j$}{$b_k$}{$a_l$} = \Bigg[\ltritree{$a_i$}{$b_j$}{$a_l$} ,\ltritree{$a_l$}{$b_l$}{$b_k$} \Bigg]\] if all the indices are different or\[\ltree{$a_i$}{$b_j$}{$b_k$}{$a_l$} = \Bigg[\ltritree{$a_i$}{$b_j$}{$a_m$} ,\ltritree{$a_l$}{$b_m$}{$b_k$} \Bigg]\] with $m \notin \{ i,j,k,l \}$ otherwise (which is possible with $g \geq 4$). We conclude, as in Section \ref{sec5} that $T_2$ is a sum of an element in $\tau_2(\mathcal{G}\cap J_2)$ and an element $T_2' = U +V$ with $U \in K$ and $V \in S$, where these spaces are respectively defined in the short exact sequence \eqref{defK} and in Lemma \ref{lemmeS}. The computations showing that $V \in \tau_2(\AJ)$ only involves commutators of trees colored both by $A$ and $B$, and the element ${\circled{5}}_{j,j,1,1}$. By Proposition \ref{tau1goeritz}, $V \in \tau_2(\mathcal{G}\cap J_2)$.
Finally, using once again the same argument, we only need to show that $\circled{6}_{l,j} - \circled{5}_{j,j,j,l} \in \tau_2(\mathcal{G}\cap J_2)$ for $i \neq j$ to show that $U \in \tau_2(\mathcal{G}\cap J_2)$. We notice that the action of $\mathrm{GL}(g, \mathbb{Z})$ corresponding to $b_1 \mapsto b_1 + b_2$ and $a_2 \mapsto a_2-a_1$ (and fixing the other elements in the basis) sends $\circled{6}_{1,2}$ to $\circled{6}_{1,2} - \circled{5}_{1,1,2,1}$. Then the action of $\iota$ on this element gives $\circled{6}_{2,1} - \circled{5}_{1,1,1,2}$. This proves that $\circled{6}_{2,1} - \circled{5}_{1,1,1,2} \in \tau_2(\mathcal{G}\cap J_2)$, and by action of $\mathrm{GL}(g, \mathbb{Z})$, that $\circled{6}_{l,j} - \circled{5}_{j,j,j,l} \in \tau_2(\mathcal{G}\cap J_2)$ for any $i \neq j$.
\end{proof}

\begin{rem}
The rationalization of $\tau_2(\G \cap J_2)$ is a finite-dimensional $\mathrm{GL}(g, \mathbb{Q})$-module. In Appendix A, we give its decomposition into irreducible modules. This results in a rational version of Proposition \ref{tau2goeritz}.
\end{rem}

Clearly one has that $\tau_k(\G \cap J_k) \subset \tau_k(\A \cap J_k) \cap \tau_k(\mathcal{B} \cap J_k)$. It is not clear if the converse is true in general. As a direct consequence of Proposition \ref{tau1goeritz}, Proposition \ref{tau2goeritz} and Theorem \ref{lastthm}, we get the following:

\begin{coroll}For $g \geq 4$, we have $\tau_1(\G \cap J_1) = \tau_1(\A \cap J_1) \cap \tau_1(\mathcal{B} \cap J_1)$ and $\tau_2(\G \cap J_1) = \tau_2(\A \cap J_2) \cap \tau_2(\mathcal{B} \cap J_2)$.
\label{cor65}
\end{coroll}

In \cite{pitco}, Pitsch already pointed out that the image of $\G$ in $\operatorname{Sp}({2g,\mathbb{Z}})$ coincides with the intersection of the images of $\A$ and $\mathcal{B}$ (see Lemma \ref{tau0goeritz}). Using this fact and the Reidemeister-Singer Theorem (see Theorem \ref{rsing}), he showed (\cite[Theorem 1]{pitco}):

\begin{propo}
There is a well-defined isomorphism \[\lim_{g \rightarrow \infty} \big ( (\mathcal{A}_{g, 1} \cap \mathcal{I}_{g, 1}) \backslash \mathcal{I}_{g, 1} /  (\mathcal{B}_{g, 1} \cap \mathcal{I}_{g, 1}) \big )_{\G_{g, 1}} \simeq \mathcal{S}^{3}.\] where $\G$ acts by conjugation.
\label{propitsch}
\end{propo}

This gives an intrinsic description of the equivalence relation given by Reidemeister-Singer Theorem on the Torelli group. The same can be done, using Corollary \ref{cor65}, for the second and third term of the Johnson filtration.

\begin{propo}
Denote $\K_{g,1}:= J_2(\Sigma_{g,1})$ and $\mathcal{L}_{g,1}:= J_3(\Sigma_{g,1})$. There are well-defined isomorphisms \[\lim_{g \rightarrow \infty} \big ( (\mathcal{A}_{g, 1} \cap \K_{g,1}) \backslash \K_{g,1} /  (\mathcal{B}_{g, 1} \cap \K_{g,1}) \big )_{\G_{g, 1}} \simeq \mathcal{S}^{3}.\]

and

\[\lim_{g \rightarrow \infty} \big ( (\mathcal{A}_{g, 1} \cap \mathcal{L}_{g,1}) \backslash \mathcal{L}_{g,1} /  (\mathcal{B}_{g, 1} \cap \mathcal{L}_{g,1}) \big )_{\G_{g, 1}} \simeq \mathcal{S}^{3}.\]
\label{prop66}
\end{propo}

\begin{proof}
The proof is by induction. We already know that the maps are well-defined and surjective (see Section \ref{sec3}. We know by Proposition \ref{propitsch} that two gluing maps $\phi \in \mathcal{K}_{g,1}$ and $\psi \in \mathcal{K}_{g,1}$ yield the same homology 3-sphere if and only if, after an eventual stabilization, there exists maps $\xi_a \in \A \cap \I$, $\xi_b \in \B \cap \I$ and $\mu \in \G$ such that $\phi=\mu \xi_{a} \psi \xi_{b} \mu^{-1}$. Applying $\tau_1$ to this equality we get that $\tau_1(\xi_a) = - \tau_1(\xi_b) \in \tau_1(\A \cap \I) \cap \tau_1(\B  \cap \I) = \tau_1( \G \cap \I)$. Then there exists $ \mu' \in \G \cap \I$ such that $ \mu^{-1}\phi \mu =\mu' \circ\left(\mu'^{-1} \xi_{a}\right) \psi \left(\xi_{b} \mu'\right) \circ \mu'^{-1}$, and $\mu'^{-1} \xi_{a} \in \A \cap \mathcal{K}_{g,1}$, $\xi_{b} \mu' \in \B \cap \mathcal{K}_{g,1}$. Then a conjugate of $\phi$ by an element of the Goeritz group is in the same double coset class as $\psi$. This concludes, as one can get the proof for $\mathcal{L}_{g,1}$ by applying the same method to some elements $\phi$ and $\psi$ in $\mathcal{L}_{g,1}$.
\end{proof}

Using the methods described by Pitsh in \cite{pitco}, Proposition \ref{prop66} could help to build invariants of homology 3-spheres by using algebraic properties of $J_2$ and $J_3$. Infortunately we do not know about generators of $\A \cap J_2$ and $\A \cap J_3$.
\newpage
\begin{appendices}
\titlelabel{\normalsize APPENDIX \thetitle.  }
\section{Decomposition of $\tau_2(\G \cap J_2) \otimes \Q$}

As a consequence of Lemma \ref{tau0goeritz}, the conjugation action of the Goeritz group $\G$  on itself induces a $\mathrm{GL}(g, \mathbb{Z})$-module structure on $\tau_2(\G \cap J_2)$, the image of the Goeritz group by the second Johnson homomorphism. This action is the restriction of the canonical action of $\mathrm{GL}(g, \mathbb{Z}) \subset \operatorname{Sp}(2g, \mathbb{Z}) \simeq \operatorname{Sp}(H)$ on $D_2(H)$ to $\tau_2(\G \cap J_2)$. Let $D_2(H^\Q)$ be the rationalization of the abelian group $D_2(H)$, with $H^\Q := H \otimes \Q$. It is clear that $H^\Q$ is a $\mathrm{GL}(g, \mathbb{Q})$-module, hence $D_2(H^\Q)$ is also a $\mathrm{GL}(g, \mathbb{Q})$-module. Then, by standard arguments (see \cite{an} for instance), the $\mathrm{GL}(g, \mathbb{Z})$-module structure on $\tau_2(\G \cap J_2)$ extends to a $\mathrm{GL}(g, \mathbb{Q})$-module structure on $\tau_2(\G \cap J_2)\otimes \Q$.

In this appendix, we fix a genus $g \geq 4$ and we give the decomposition of this module into irreducible $\mathrm{GL}(g, \mathbb{Q})$-modules. We do not use any results from Section \ref{sec5}. Recall from Section 6 that we have a basis $(a_i,b_i)_{1 \leq i \leq g}$ for $H$, inducing a basis for $H^\Q$. This also yields a decomposition $H = A^\Q \oplus B^\Q$, with $A^\Q$ and $B^\Q$ stable under the action of $\mathrm{GL}(g, \mathbb{Q})$. Specifically, $\operatorname{GL}(A^\Q)$ acts on $A^\Q$ and $(A^\Q)^*$ in the natural way, $B^\Q$ is identified to $(A^\Q)^*$ via $\omega$ and $\operatorname{GL}(A^\Q)$ is identified to $\operatorname{GL}(g,\Q)$ through the basis $(a_1, \dots, a_g)$ of $A^\Q$.


Let $D_{i,j}$ be the subspace of $D_2(H^\Q)$ generated by expansions of trees with $i$ leaves in $A^\Q$ and $j$ leaves in $B^\Q$, for $0 \leq i \leq 4$ and $i+j = 4$. We compute the dimensions of these submodules of $D_2(H^\Q)$.

\begin{lemma}For any $g \geq 3$, we have: 
\begin{align}
\operatorname{dim}(D_2(H^\Q)) &= \frac{g^2(2g-1)(2g+1)}{3}\label{A1}\\
\operatorname{dim}(D_{i,j}) &= \operatorname{dim}(D_{j,i})\label{A2}\\
\operatorname{dim}(D_{0,4}) &= \frac{g^2(g-1)(g+1)}{12}\label{A3}\\
\operatorname{dim}(D_{1,3}) &= \frac{g^2(g-1)(g+1)}{3}\label{A4}\\
\operatorname{dim}(D_{2,2}) &= \frac{g^2(g^2+1)}{2}\label{A5}
\end{align}

\begin{proof}
Equation \eqref{A1} is a consequence of the isomorphism $\frac{(\Lambda^2 H^\Q \otimes \Lambda^2 H^\Q)^{\mathfrak{S}_2}}{\Lambda^4 H^\Q} \simeq  D_2(H^\Q)$ (see diagram \eqref{diagmas}). Equation \eqref{A2} is obtained by interchanging the $a_i's$ and $b_i's$. We also notice that $D_{0,4} \simeq D_2(A^\Q)$, and we obtain equation \eqref{A3}. The space $D_{1,3}$ is isomorphic to $A^\Q \otimes \mathcal{L}_3(B^\Q)$, and the dimension of $\mathcal{L}_3(V)$ is equal to $\frac{n^3-n}{3}$ for a vector space $V$ of dimension $n$: this proves equation \eqref{A4}. Equation \eqref{A5} follows from the previous using that $D_2(H^\Q)= \bigoplus_{0 \leq i \leq 4} D_{i,4-i}$. One can also get equation \eqref{A5} by showing that there is an isomorphism $D_{2,2}\simeq S^2(A^\Q \otimes B^\Q)$.
\end{proof}
\end{lemma}

We now decompose $\tau_2(\G \cap J_2) \otimes \Q$ in 3 submodules and compute their respective dimensions. Denote $\operatorname{Tr}^{A,\Q}$ for $\operatorname{Tr}^A \otimes \Q$ and $\operatorname{Tr}^{B,\Q}$ for $\operatorname{Tr}^B \otimes \Q$. The kernels of these two maps are both regarded as $\operatorname{GL}(g,\Q)$-submodules of $D_2(H^\Q)$.

\begin{coroll}
The space $\tau_2(\G \cap J_2)\otimes \Q$ is a subset of $$ \operatorname{Ker}(\operatorname{Tr}^{A,\Q}) \cap \operatorname{Ker}(\operatorname{Tr}^{B,\Q})= \big( D_{1,3} \cap \operatorname{Ker}(\operatorname{Tr}^{A,\Q}) \big )\oplus D_{2,2} \oplus \big( D_{3,1}\cap \operatorname{Ker}(\operatorname{Tr}^{B,\Q})\big),$$ and the summands are $\operatorname{GL}(g,\Q)$-submodules with respective dimensions $\frac{g(g+1)(2g^2-2g-3)}{6}$, $\frac{g^2(g^2+1)}{2}$ and $\frac{g(g+1)(2g^2-2g-3)}{6}$.
\end{coroll}

\begin{proof}
The inclusion is a consequence of Theorem \ref{theoremtrA}, given that $\G= \A \cap \B$. The decomposition is an immediate consequence of the fact that $D_{3,1} \oplus D_{2,2} \subset \operatorname{Ker}(\operatorname{Tr}^{A,\Q})$, and $D_{1,3} \oplus D_{2,2} \subset \operatorname{Ker}(\operatorname{Tr}^{B,\Q})$. The maps $\operatorname{Tr}^{A,\Q}$ and $\operatorname{Tr}^{B,\Q}$ respect the action of $\operatorname{GL}(g,\Q)$ by Remark \ref{reminvariance}, hence the 3 summands are $\operatorname{GL}(g,\Q)$-submodules. The computation of the dimensions is a consequence of the rank theorem and the previous lemma, as $\operatorname{Tr}^{A,\Q}$ is surjective onto $S^2(H/A \otimes \Q)$. 
\end{proof}

Next, we use the representation theory of $\mathrm{SL}(g, \mathbb{C})$, and exhibit the irreducible modules in $\tau_2(\G \cap J_2) \otimes \Q$ by finding heighest weight vectors. Our notation convention for a Young diagram with $n$ rows of type $(\lambda_1 \geq \lambda_2 \geq  ...  \geq \lambda_n \geq 0$) is $[\lambda_1\lambda_2...\lambda_n]$. To such a diagram is associated an irreducible representation of $\mathrm{SL}(g, \mathbb{\Q})$ whenever $n \leq g-1$, as described in \cite{fulhar}. For short, to a Young diagram $\lambda := [\lambda_1\lambda_2...\lambda_{g-1}]$ is associated the subrepresentation of the tensor product $\bigotimes_{i=1}^{g-1} S^{(\lambda_i- \lambda_{i+1})} (\Lambda^{i}V)$ spanned by $v_{\lambda} := (e_1)^{\lambda_1 - \lambda_2}\otimes (e_1\wedge e_2)^{(\lambda_2 - \lambda_3)}  \otimes \ldots  \otimes (e_1 \wedge \ldots \wedge e_{g-1})^{(\lambda_{g-1} - \lambda_{g})}$, where $V:=\Q^g$ has a basis $e_1, e_2 \dots e_g$, and $\lambda_g= 0$. This defines both a representation of $\operatorname{GL}(g,\Q)$ and $\operatorname{SL}(g,\Q)$.

\begin{theorem}
For any $g \geq 4$, we have an isomorphism of $\operatorname{SL}(g,\Q)$-modules $$\tau_2(\G \cap J_2)\otimes \Q = 2[0] + 2[21^{g-2}] + [42^{g-2}] + [2^21^{g-4}] + [32^{g-3}1]+[1^{g-2}] + [321^{g-3}] +[1^2].$$
\label{thsl}
\end{theorem}

\begin{proof}[Sketch of proof]
We simply need to exhibit highest weight vectors in $\tau_2(\G \cap J_2)\otimes \Q$ for the action of $\mathrm{SL}(g, \mathbb{\Q})$ on $D_2(H^\Q)$, such that the sum of the dimensions of the modules they generate is the dimension of $\operatorname{Ker}(\operatorname{Tr}^{A,\Q}) \cap \operatorname{Ker}(\operatorname{Tr}^{B,\Q})$. We can check this using Lemma \ref{A1}, and it is standard representation theory to verify that a given vector is a highest weight vector. Hence we get that $D_{2,2}$ decomposes into \[
 \begin{array}{|c|c|c|c|c|c|c|}
 \hline \bigoplus & [0] &[0]&[21^{g-2}]& [21^{g-2}] & [42^{g-2}]& [2^21^{g-4}] \\
\hline  \operatorname{dim} & 1 &1 & g^2-1 & g^2-1 & \frac{g^2(g-1)(g+3)}{4} & \frac{g^2(g+1)(g-3)}{4} \\
\hline \operatorname{HWV}& \tau_2(T_\xi)& \sum\limits_{i,j = 1}^{g}\vltree{$a_i$}{$a_j$}{$b_i$}{$b_j$} & \sum\limits_{i = 1}^{g}\vltree{$a_1$}{$b_g$}{$b_i$}{$a_i$}&\sum\limits_{i = 1}^{g}\vltree{$a_1$}{$a_i$}{$b_i$}{$b_g$} & \vltree{$a_1$}{$b_g$}{$a_1$}{$b_g$}& \vltree{$a_1$}{$a_2$}{$b_g$}{$b_{g-1}$}\\\hline
\end{array}\]

\noindent where $T_\xi$ is the Dehn twist around the boundary component of $\Sigma_{g,1}$, and $\big(D_{1,3} \cap \operatorname{Ker}(\operatorname{Tr}^{A,\Q})\big) \oplus \big(D_{3,1} \cap \operatorname{Ker}(\operatorname{Tr}^{B,\Q})\big)$  decomposes into 
\[\begin{array}{|c|c|c||c|c|}
 \hline \bigoplus & [32^{g-3}1] & [1^{g-2}] & [321^{g-3}]& [1^2] \\
\hline  \operatorname{dim} & \frac{g^2(g-2)(g+2)}{3} & \frac{g(g-1)}{2} & \frac{g^2(g-2)(g+2)}{3} & \frac{g(g-1)}{2} \\
\hline \operatorname{HWV}& \vltree{$a_1$}{$b_g$}{$b_{g-1}$}{$b_{g}$} & \sum\limits_{i = 1}^{g} \vltree{$a_{i}$}{$b_{i}$}{$b_g$}{$b_{g-1}$}& \vltree{$a_2$}{$a_1$}{$b_g$}{$a_1$} & \sum\limits_{i = 1}^{g} \vltree{$a_1$}{$a_2$}{$b_i$}{$a_i$}\\\hline
\end{array}.\] One can also get these decompositions by giving tensorial description of the modules (such as $D_{2,2}\simeq S^2(A^\Q \otimes B^\Q)$) and by using Pieri's formula. It remains to show that the above highest weight vectors are indeed in $\tau_2(\G \cap J_2)\otimes \Q$. The author checked this for $g \geq 4$ and did it in the same spirit as in the proof of Proposition \ref{tau2goeritz}.
\end{proof}

This already gives a rational version of Proposition \ref{tau2goeritz}. 
\begin{coroll}
For any $g \geq 4$, $\tau_2(\G \cap J_2)\otimes \Q = \operatorname{Ker}(\operatorname{Tr}^{A,\Q}) \cap \operatorname{Ker}(\operatorname{Tr}^{B,\Q})$.
\end{coroll}

Finally, we turn to the decomposition of $\tau_2(\G \cap J_2) \otimes \Q$ into irreducible $\operatorname{GL}(g,\Q)$-modules. For any integer $k\geq 0$, we now denote $\operatorname{Det}^k$ the $k$th power of the determinant representation, and $\operatorname{Det}^{-k}$, its dual. Any irreducible rational representation of $\mathrm{GL}(g, \mathbb{C})$ is obtained as the tensor product of an irreducible representation of $\mathrm{SL}(g, \mathbb{C})$ of type $\lambda$ (for a young diagram $\lambda$) with a power of the determinant representation. By looking at the action of the center of $\operatorname{GL}(g,\Q)$ on the highest weight vectors given in the proof of Theorem $\ref{thsl}$, we get the following:

\begin{theorem}
For any $g \geq 4$, we have an isomorphism of $\operatorname{GL}(g,\Q)$-modules
\begin{align*}
\tau_2(\G \cap J_2)\otimes \Q &= 2[0] + 2[21^{g-2}]\otimes \operatorname{Det}^{-1}  + [42^{g-2}]\otimes \operatorname{Det}^{-2} + [2^21^{g-4}]\otimes \operatorname{Det}^{-1} \\&+ [32^{g-3}1]\otimes \operatorname{Det}^{-2}+[1^{g-2}]\otimes\operatorname{Det}^{-1} + [321^{g-3}]\otimes\operatorname{Det}^{-1} +[1^2].
\end{align*}
\label{gldec}
\end{theorem}

\begin{proof}
We know that each irreducible summand $W$ of the $\operatorname{SL}(g,\Q)$-module decomposition of $\tau_2(\G \cap J_2) \otimes \Q$ is isomorphic as a $\operatorname{GL}(g,\Q)$-module to $W \otimes \operatorname{Det}^k$, for some $k \in \mathbb{Z}$. We also know that the isomorphism between the ``model'' representation given by the Young diagram $\lambda$ and $W$ can be made explicit by sending $v_\lambda$ to the highest weight vector of our representation. The integer $k$ must be chosen in such a way that this isomorphism lifts to an isomorphism of $\operatorname{GL}(g,\Q)$-modules.

We only do the computation for one summand, say $\lambda =[32^{g-3}1]$. The map sending $v_\lambda$ to $T_\lambda := \vltree{$a_1$}{$b_g$}{$b_{g-1}$}{$b_{g}$}$ is an isomorphism of $\operatorname{SL}(g,\Q)$-modules, but one can check that for any $d \in \Q$, $(d Id) \cdot v_\lambda = d^{2g-2}v_\lambda$, while $(d Id) \cdot T_\lambda = \frac{1}{d^2} T_\lambda$. By choosing $k = -2$, we get that the map from $[32^{g-3}1] \otimes \operatorname{Det}^{-2}$ to the $\operatorname{GL}(g,\Q)$-module spanned by $T_\lambda$, sending $v_\lambda \otimes 1$ to $T_\lambda$ is a $\operatorname{GL}(g,\Q)$-equivariant isormorphism. More generally, one can check that for a Young diagram $\lambda:= [\lambda_1\lambda_2...\lambda_{g-1}]$ appearing in the irreducible decomposition of $D_{i,j}$, we get $k = \frac{1}{g}(i-j-\sum\limits_{i = 1}^{g-1} \lambda_i)$.
\end{proof}

\begin{rem}
In the decomposition of Theorem \ref{gldec}, the action of $\iota$ induces the following symmetries:
\begin{enumerate}
\item the irreducible summands in $D_{2,2}$ are isomorphic to their own duals,
\item the irreducible summands in $D_{1,3}$ and $D_{3,1}$ are exchanged when dualizing, indeed we have: $([321^{g-3}] \otimes \operatorname{Det}^{-1})^* \simeq  [32^{g-3}1]\otimes \operatorname{Det}^{-2}$, and $[1^2]^* \simeq [1^{g-2}] \otimes \operatorname{Det}^{-1}$.\end{enumerate}
This is an instance of a general fact: for any $k \geq 1$, $\tau_k(\G \cap J_k)\otimes \Q$ is isomorphic to its dual as a $\operatorname{GL}(g,\Q)$-module. Indeed, the map $\iota$ preserves $\tau_k(\G \cap J_k)\otimes \Q$, and one can see by direct computation that $\forall P \in \operatorname{GL}(g,\Q),  \forall X \in D_k(H^\Q), \iota(P\cdot X) = (P^T)^{-1}\cdot \iota(X)$. Hence the basis of $H^\Q$ induces a $\Q$-module isomorphism between $D_2(H^\Q)$ and its dual, and the composition of this isomorphism with $\iota$ is a $\operatorname{GL}(g,\Q)$-module isomorphism between $D_2(H^\Q)$ and $D_2(H^\Q)^*$. We conclude that if $W$ is an irreducible module in $\tau_k(\G \cap J_k)\otimes \Q$, then $\iota W$ is also an irreducible module in $\tau_k(\G \cap J_k)\otimes \Q$ which is isomorphic as a $\operatorname{GL}(g,\Q)$-module to the dual representation $W^*$.
\label{remA4}
\end{rem}

\label{appA}
\end{appendices}
\bibliographystyle{plain}
\bibliography{bibsjhgv2}
\address
\end{document}